\documentclass[11pt,reqno]{amsart}

\usepackage[foot]{amsaddr}
\usepackage[margin = 1in]{geometry}
\usepackage{xcolor}
\usepackage{mathtools}
\usepackage[sort,numbers]{natbib}
\usepackage{bm}
\usepackage{bbm}
\usepackage{amsfonts}       
\usepackage{amsmath} 
\numberwithin{equation}{subsection}
\usepackage{amsthm}
\usepackage{amssymb}
\usepackage{enumitem}
\usepackage{mathrsfs}
\usepackage{comment}

\usepackage{tikz}

\usepackage{graphicx} 
\graphicspath{ {./images/} }
\usepackage{subcaption}

\usepackage[linktocpage = true]{hyperref}
\hypersetup{colorlinks = true, linkcolor = blue, citecolor = blue, urlcolor = blue}

\usepackage{centernot}

\theoremstyle{plain}

\newtheorem{thm}{Theorem}[section]      
\newtheorem{prop}[thm]{Proposition}
\newtheorem{lemma}[thm]{Lemma}
\newtheorem{cor}[thm]{Corollary}

\theoremstyle{definition}
\newtheorem{defi}[thm]{Definition}
\newtheorem{assumption}[thm]{Assumption}

\theoremstyle{remark}
\newtheorem{remark}[thm]{Remark}

\newcommand{\N}{\mathbb{N}}
\newcommand{\Z}{\mathbb{Z}}

\renewcommand{\Pr}{\mathbb{P}}
\newcommand{\E}{\mathbb{E}}

\renewcommand{\d}{\mathrm{d}}
\newcommand{\1}{\mathbbm{1}}

\newcommand{\leqst}{\leq_{\mathrm{st}}}
\newcommand{\geqst}{\geq_{\mathrm{st}}}

\DeclareMathOperator{\V}{Var}

\DeclareMathOperator{\MRSC}{MRSC}
\DeclareMathOperator{\LM}{LM}

\newcommand{\e}{\epsilon}

\newcommand{\lp}{\left(}
\newcommand{\rp}{\right)}
\newcommand{\lb}{\left[}
\newcommand{\rb}{\right]}

\newcommand{\sd}[1]{\footnote{Souvik: #1}}

\def\sss{\scriptscriptstyle}

\title{Giants through Higher-Order paths in Random Simplicial Complexes}

\author[Dhara]{Souvik Dhara$^\dagger$}
\author[Kang]{Taegyu Kang $^\ddagger$}

\address{$^\dagger$ Georgia Institute of Technology. \emph{Email:} \href{mailto:sdhara@gatech.edu}{\tt sdhara@gatech.edu}}
\address{$^\ddagger$ Georgia Institute of Technology. \emph{Email:} \href{mailto:tkang81@gatech.edu}{\tt tkang81@gatech.edu}}

\subjclass[2020]{60C05, 05E45, 05C80}
\keywords{Random simplicial complexes, high-dimensional giant component, Benjamini-Schramm convergence}

\thanks{\emph{Acknowledgements.} Part of Taegyu Kang's research was done during his Ph.D. at Purdue University, where he was partially supported by the AFOSR grant FA9550-22-1-0238 at Purdue University under the supervision of Takashi Owada.
}

\begin{document}

\begin{abstract}
We investigate the giant component formed via higher-dimensional paths in the multi-parameter random simplicial complex (MRSC) model. For a $d$-dimensional simplicial complex, we define $d$-dimensional connectivity through incidence between $(d-1)$- and $d$-dimensional simplices. 
The phase transition of the largest $d$-dimensional connected component is determined in terms of the parameter~$\lambda$ that governs the number of $d$-simplices incident to a typical $(d-1)$-simplex. In the subcritical regime, we show that the largest component contains $\Theta(\log n)$ many $(d-1)$-simplices with high probability in the MRSC model. In the supercritical regime, we determine the asymptotic proportion of $1$-simplices in the giant component in dimension $2$, for $\lambda_c < \lambda < \bar{\lambda}$, where $\bar{\lambda} > 4$ is an explicit constant. In particular, for Linial--Meshulam complexes, this result holds throughout the entire supercritical regime. Additionally, we show that the number of vertices in the giant component undergoes a discontinuous phase transition in $d$-dimensional Linial--Meshulam complexes, in the sense that the asymptotic proportion of vertices in the giant jumps from $0$ to $1$. Our approach is based on local-weak convergence. We establish local-weak convergence in probability for the MRSC model and prove the concentration result via a refined analysis of the breadth-first exploration process, that tracks contributions from newly discovered and previously explored vertices.

\end{abstract}

\maketitle
\section{Introduction}
\label{sec: intro}
The discovery of the giant component phase transition by Erd\H{o}s and R\'enyi~\cite{ER60} gave rise to one of the central themes in random graph theory. 
A version of this classical result states that, for the Erd\H{o}s-R\'enyi random graph $G(n,p)$ with $p = \frac{\lambda}{n}$, when $\lambda > 1$, the number of vertices in the largest connected component concentrates around $n\zeta(\lambda)$, where $\zeta(\lambda)$ is the survival probability of a Poisson$(\lambda)$ branching process. Moreover, the size of the second largest component is of order $\log n$ with high probability. In contrast, when $\lambda < 1$, the largest component has size $O(\log n)$ with high probability. This result has driven major subsequent developments in the study of large-scale networks; see, for example, the books~\cite{Draief;Massoulie:2009, Jackson:2010, Hofstad:2017, Hofstad:2024}, where these developments and their applications are discussed extensively. The primary goal of this paper is to establish an analogous phase transition and concentration result for the giant component in higher-dimensional analogues of random graphs, namely multi-parameter random simplicial complexes.

There have been many attempts to generalize graphs and their properties to higher-dimensional settings. One natural generalization is given by simplicial complexes. A simplicial complex is a collection of simplices; in particular, a singleton set (a vertex) is a $0$-dimensional simplex, a two-element set (an edge) is a $1$-dimensional simplex, and a three-element set (a triangle) is a $2$-dimensional simplex, and so on. Historically, simplicial complexes arose in algebraic topology as a combinatorial and algebraic framework for describing topological spaces. More recently, the relevance of higher-order connectivity has been demonstrated across a wide range of fields, including physics~\cite{Lambiotte:2019,Bobrowski;Krioukov:2022}, neuroscience~\cite{Sizemore;PhillipsCremins;Ghrist;Bassett:2019,Lord:2016}, human behavior~\cite{Sizemore:2018}, and the social sciences~\cite{Sarker:2024}. 
In the mathematical literature, two prominent perspectives on giant components and higher-order connectivity arise from homology in algebraic topology and from hypergraphs in combinatorics.
\\

\noindent \textbf{Homology and the giant shadow.}
The most elementary model generalizing Erd\H{o}s-R\'enyi random graphs to simplicial complexes is the \emph{Linial--Meshulam complex}, introduced by Linial and Meshulam~\cite{Linial;Meshulam:2006} for dimension $2$ and subsequently extended to general dimension $d \geq 2$ by Meshulam and Wallach~\cite{Meshulam;Wallach:2009}. The model, denoted by $\text{LM}_d(n,p)$, is constructed by including all simplices up to dimension $d-1$ on $n$ vertices and then including each $d$-dimensional simplex independently with probability $p$. Linial and Meshulam~\cite{Linial;Meshulam:2006} first identified a phase transition for homological $1$-connectivity of $\text{LM}_2(n,p)$ over $\mathbb{Z}/2\mathbb{Z}$, where connectivity is defined via the vanishing of the first homology group. In the case of graphs, this is closely related to the classical connectivity threshold, since a graph is connected if and only if its reduced $0$-th homology group is trivial. This result was subsequently extended to general dimensions by \cite{Meshulam;Wallach:2009,Luczak;Peled:2018,Newman;Paquette:2023}.
A more general model, referred to as the \emph{multi-parameter random simplicial complex (MRSC)} model (see Definition~\ref{defn:MRSC}), was introduced by Costa and Farber~\cite{Costa;Farber:2016, Costa;Farber:2017A, Costa;Farber:2017B}. This framework generalizes the Linial--Meshulam complex and encompasses other models, such as the random clique complex~\cite{Kahle:2014}. 

The notion of the \emph{giant shadow} was introduced in the homological setting to capture a phase transition analogous to the giant component phenomenon. The shadow of a $d$-dimensional simplicial complex $X$ is defined as the set of all $d$-simplices $\sigma$ such that $\sigma \notin X$, but whose addition creates a new $d$-cycle. Linial and Peled~\cite{Linial;Peled:2016} identified the phase transition corresponding to having a shadow of positive density in $\text{LM}_d(n,p)$. In the terminology of random graphs and percolation theory, in dimension $d=1$, this corresponds to the susceptibility function exhibiting a jump from $O(1)$ to $\Omega(n)$, which is a well-known alternative characterization of the classical giant component problem. However, in higher dimensions, this notion is not equivalent to connectivity via higher-order paths, which is the focus of our work.
For a comprehensive overview of random simplicial complexes and their properties, we refer to the survey~\cite{Bobrowski;Krioukov:2022}.\\

\noindent \textbf{Giant in Hypergraphs.}
In combinatorial approaches, the random $(d+1)$-uniform hypergraph model $H \sim H_{d+1}(n,p)$ serves as a canonical generalization of the Erd\H{o}s--R\'enyi random graph, where each $(d+1)$-hyperedge (i.e., a subset of $d+1$ vertices) appears independently with probability $p$. This model can be viewed as a simplicial complex via its \emph{downward closure} (i.e., by including all non-empty subsets of each hyperedge), which is closely related to $\text{LM}_d(n,p)$ up to the presence of additional $k$-dimensional simplices for $k \leq d-1$.
Classical work of Schmidt-Pruzan and Shamir~\cite{SS85} identified the phase transition in terms of the number of vertices in the largest connected component, while Karo\'nski and {\L}uczak~\cite{KL02} determined its size in the barely supercritical regime. In these works, connectivity and paths are defined via hyperedges having only non-empty intersections, rather than through higher-order paths (see Definition~\ref{defn:higher-order-path}), leading to a continuous phase transition for the number of vertices, in contrast to the discontinuous behavior observed in our setting. More refined results on this notion of connectivity including centrala and local limit theorems were obtained in~\cite{RR06,BCK10,BR12}.

To the best of our knowledge, the only works that study the giant component phase transition in hypergraphs via higher-order paths are~\cite{Cooley;Kang;Koch:2018, Cooley;Kang;Person:2018}. In~\cite{Cooley;Kang;Person:2018}, Cooley, Kang, and Person identified the threshold for the emergence of a giant component in $H \sim H_{d+1}(n,p)$. The asymptotic size of the giant component was subsequently studied by Cooley, Kang, and Koch~\cite{Cooley;Kang;Koch:2016}, but their results and methods are restricted to the barely supercritical regime, where $p = p_c(1+\varepsilon)$ with $\varepsilon \to 0$.\\

\noindent \textbf{Our Contribution.} In this paper, we establish a sequence of results on the phase transition and concentration of the giant component formed through higher-order paths in multi-parameter random simplicial complexes (MRSC), which includes Linial–Meshulam complexes as a special case. We identify a phase transition governed by a parameter $\lambda$, which can be interpreted as the expected number of $d$-simplices incident to a typical $(d-1)$-simplex.

\begin{enumerate}
    \item \emph{Subcritical Phase:} For $\lambda < \frac{1}{d}$, we show that the largest component contains between $a \log n$ and $b \log n$ many $(d-1)$-simplices with high probability, for some explicit constants $a,b > 0$ (Theorem~\ref{thm: subcritical regime}). \vspace{.1cm}

    \item \emph{Supercritical Phase:} In dimension 2, we show that the proportion of $1$-simplices (or edges) in the largest component concentrates around $\zeta_\lambda$, where $\zeta_\lambda$ is the survival probability of a branching process with offspring distribution $2\times \mathrm{Poisson}(\lambda)$. 
    This result holds for the entire supercritical regime in the Linial-Meshulam complex~$\text{LM}_2(n,\frac{\lambda}{n})$, while, for the general MRSC model, this concentration holds for $1/2 < \lambda < \bar{\lambda}$ where $\bar{\lambda} > 4$ (Theorem~\ref{thm: supercritical MRSC}). The regime it becomes more permissive as the edge occupancy leads to denser graphs.
    \vspace{.1cm}

    \item \emph{Discontinuous Transition:} We show that the proportion of vertices in the giant component undergoes a discontinuous phase transition at $\lambda = \frac{1}{d}$ in $\text{LM}_d(n,\frac{\lambda}{n})$. Specifically, this proportion converges in probability to $0$ for $\lambda < \frac{1}{d}$ and to $1$ for $\lambda > \frac{1}{d}$ (Theorem~\ref{thm: vertex}).
\end{enumerate}

\noindent\textbf{Methods and Challenges.} Our approach builds on a recent general framework of van der Hofstad~\cite{Hofstad:2021,Hofstad:2024} for proving concentration of giant components in random graphs. We extend this framework to simplicial complexes and show that, for any random simplicial complex model, the proportion of $(d-1)$-simplices in the $d$-dimensional giant component concentrates provided the following two conditions hold: 
(1) the underlying simplicial complexes converge in probability in the sense of \emph{Benjamini--Schramm local weak convergence}; 
(2) for two uniformly chosen $(d-1)$-simplices, conditioned on their neighborhoods being sufficiently large, there exists a higher-order path connecting these neighborhoods (Proposition~\ref{prop: side condition for the giant}). For the MRSC model, we first establish local weak convergence in probability (Theorem~\ref{thm: LWC in Prob of MRSC}), which is a stronger version of that proved by Kanazawa~\cite{Kanazwa:2022}. An immediate corollary of this result is Proposition~\ref{prop: number of components}, which provides asymptotic estimates for the number of higher-order connected components, which is of independent interest.

The main technical challenge lies in verifying the second condition. In random graphs, this condition is typically verified by showing that once the neighborhoods of two randomly chosen vertices become sufficiently large, they grow exponentially, and once each boundary contains on the order of $L\sqrt{n}$ vertices (for large $L$), a connection between the two neighborhoods occurs with high probability. Thus, it suffices to explore neighborhoods up to size $\Theta(\sqrt{n})$ to verify this condition without exploring the whole graph. However, this intuition breaks down fundamentally in simplicial complexes. For instance, in two-dimensional Linial--Meshulam complexes, ensuring the existence of a higher-order path requires exploring neighborhoods until $\omega(n)$ edges are discovered. A purely \emph{forward exploration} (see Section~\ref{subsec: breadth-first exploration}), which tracks connections from the explored set to previously unseen vertices, is insufficient, as it can yield at most $O(n)$ edges. Instead, it is necessary to incorporate a \emph{backward exploration}, which accounts for connections formed within the currently explored vertex set.

Our analysis reveals a two-phase structure of the exploration process: in the initial phase, forward exploration dominates, while once $\Omega(n)$ vertices have been discovered, backward exploration becomes the primary mechanism. Throughout, we must carefully track the number of edges incident to the explored set. This contribution is non-negligible and, in fact, can grow exponentially according to a different branching process when $\lambda$ is large. This phenomenon is a key source of technical difficulty and explains why our analysis is currently restricted to $\lambda < \bar{\lambda}$. Extending the concentration results to the regime $\lambda > \bar{\lambda}$ remains an interesting direction for future work.
\\

\noindent\textbf{Organization.}
The remaining part of this paper is organized as follows. In Section \ref{subsec: intro simplicial complex} and \ref{subsec: intro random simplicial complex}, we define $d$-dimensional connectivity in a simplicial complex and introduce the multi-parameter random simplicial complex (MRSC) model. In Section \ref{subsec: main results}, we present the main theorems of this paper. 
In Section \ref{sec: prelim}, we introduce the local-weak convergence of random simplicial complexes and provide general properties connecting the local-weak convergence of random simplicial complexes and their higher-dimensional connected components, which are applicable to any random simplicial complex models. Although the local-weak convergence of simplicial complexes has recently been utilized as a versatile tool to analyze random simplicial complexes, we have not found a detailed description of its metric formulation, and proofs of basic facts like completeness and seperability of the underlying metric space. Hence, we add its detailed formulation in Appendix \ref{appendix: formulation of LWC}. In Section \ref{subsec: breadth-first exploration} and \ref{subsec: breadth-first exploration-stochastic properties}, we describe the breadth-first exploration of a higher-order component in a simplicial complex and present various probabilistic estimates. In Section \ref{subsec: LWC of MRSC}, we establish the local-weak convergence of MRSCs (Theorem~\ref{thm: LWC in Prob of MRSC}) by using the breadth-first exploration on local neighborhoods. The formal proofs of Theorem \ref{thm: subcritical regime}, \ref{thm: supercritical MRSC}, and \ref{thm: vertex} are presented in Section \ref{sec: proof}.
\\

\noindent\textbf{General notation and terminology.} Throughout this paper, we often use the Bachmann--Landau asymptotic notation $o(1), O(1), \Theta(1)$ etc. Sometimes we express $a = O(b)$ as $a \lesssim b$ for convenience. For two sequences $(a_n)_{n\geq 1}$ and $(b_n)_{n\geq 1}$, we write $a_n \asymp b_n$ as a shorthand for $\lim_{n\to \infty}\frac{a_n}{b_n} =1$. 
Given a sequence of probability measures $(\Pr_n)_{n\geq 1}$, a sequence of events $(\mathcal{E}_n)_{n\geq 1}$ is said to hold \emph{with high probability} (\textit{w.h.p.} for brevity) if $\lim_{n\to\infty} \Pr_n(\mathcal{E}_n) = 1$. We use $\xrightarrow{\sss\Pr}$ and $\xrightarrow{\sss D}$ to denote convergence in probability and in distribution, respectively. We write $[n]$ for the set of $n$ integers $\{1, 2, \dots, n\}$. Additionally, we write $\mathsf{Ber}$, $\mathsf{Bin}$, and $\mathsf{Poi}$ for Bernoulli, Binomial, and Poisson distributions, respectively.

\subsection{Simplicial complexes and higher-order connectivity}
\label{subsec: intro simplicial complex}
We start by formally defining simplicial complexes:
\begin{defi}[Simplicial  complexes]
\label{def: simplicial complex}
A simplicial complex $X$ is a collection of finite sets, called \emph{simplices}, satisfying the following \emph{subset closure property}: if $\tau \in X$ and $\sigma$ is a non-empty subset of~$\tau$, then $\sigma \in X$. We say that a simplex $\sigma \in X$ is a $d$\emph{-dimensional simplex} (or simply $d$-simplex) if $|\sigma| =d+1$ and write $\dim (\sigma) = d$, where $| \cdot |$ denotes the cardinality of a set. A simplicial complex is said to be $d$-dimensional if $\max_{\sigma\in X} \dim (\sigma) = d$, and write $\dim X = d$.
\end{defi}

For a simplicial complex $X$, we let $S_d(X)$ denote the set of all $d$-dimensional simplices in $X$ for each $d \in \N \cup \{0\}$, and let $s_d(X)$ denote the cardinality of $S_d(X)$. In particular, $S_0(X)$ denotes the set of vertices and $S_1(X)$ denotes the set of edges in $X$. The \textit{$d$th skeleton} $X^{(d)}$ of $X$ is defined to be the set of all simplices in $X$ up to dimension $d$, i.e., $X^{(d)} := \cup_{k=0}^d S_{k}(X)$. For $\sigma \in X$ and each $0 \leq k \leq \dim \sigma$, we call a $k$-dimensional simplices contained in $\sigma$ a $k$-dimensional \textit{face} (or simply a $k$-face) of $\sigma$. We say that a $d$-dimensional simplicial complex is \textit{pure} if every simplex in $X$ is contained in some $d$-dimensional simplex in $X$, i.e., every simplex in $X$ is a face of some $d$-dimensional simplex of $X$. Readers are referred to \cite{Munkres:1996} for a thorough exposition on these concepts.

\begin{figure}[t]
    \centering
    \includegraphics[width=0.6\textwidth]{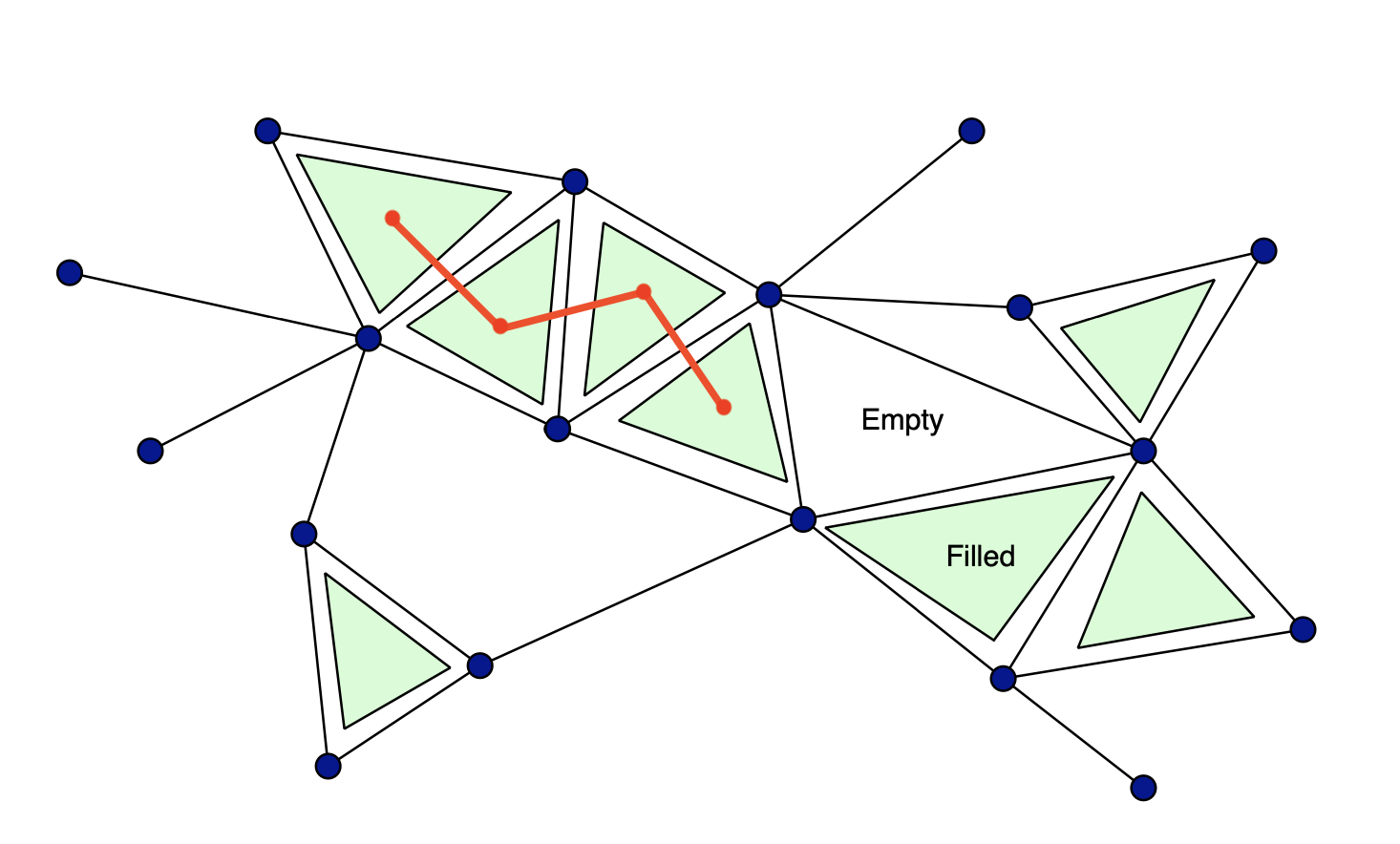}
    \caption{Visualization of a 2-dimensional simplicial complex. The green triangles are part of the simplicial complex while the empty triangles are not. The red line represents a $2$-dimensional path in the simplicial complex. The 1-skeleton consists of the nodes and edges and excludes the triangles.}
    \label{fig: complex}
\end{figure}

Next, we define $d$-dimensional connectivity. This notion of connectivity appeared in other contexts in~\cite{Horak;Jost:2013, Owada;Samorodnitsky:2025, Cooley;Kang;Koch:2018}.

\begin{defi}[$d$-dimensional paths]\label{defn:higher-order-path}
For any pair of $(d-1)$-dimensional simplices $\sigma, \sigma' \in S_{d-1}(X)$, we say that they are \textit{adjacent} if there exists a $d$-dimensional simplex $\tau \in S_d(X)$ such that $\sigma \subseteq \tau$ and $\sigma' \subseteq \tau$.  
For any pair of $(d-1)$-dimensional simplices $\sigma_1, \sigma_2 \in S_{d-1}(X)$, we say that they are \textit{adjacent} if there exists a $d$-dimensional simplex $\tau \in S_d(X)$ such that $\sigma_1 \subseteq \tau$ and $\sigma_2 \subseteq \tau$. Also, we say that $\sigma$ and $\sigma'$ are connected through a \textit{$d$-dimensional path}, denoted by $\sigma \overset{\sss d}{\longleftrightarrow} \sigma'$ if there exist distinct elements $(\sigma_i)_{i=0}^m\subseteq S_{d-1}(X)$ with $\sigma_0 = \sigma$ and $\sigma_m = \sigma'$ such that $\sigma_{i-1}$ and $\sigma_{i}$ are adjacent for all $i = 1, \dots, m$.

The smallest simplicial complex containing all $d$-dimensional simplices $\{\tau_i\}_{i=1}^m$ each of which defines the adjacency $\sigma_{i-1} \sim \sigma_i$ is called a $d$-dimensional path of length $m$ connecting $\sigma = \sigma_0$ and $\sigma' = \sigma_m$. The length of the shortest path between $\sigma,\sigma' \in S_{d-1}(X)$ is the minimum value of $m$ such that there exists a $d$-dimensional path between $\sigma,\sigma'$ of length $m$.
\end{defi}

Note that the relation $\overset{\sss d}{\longleftrightarrow}$ is an equivalence relation on $S_{d-1}(X)$, and we call its equivalence classes the \textit{$d$-dimensional connected components} in $X$. In particular, for every $\sigma \in S_{d-1}(X)$, let $\mathcal{C}(\sigma)$ denote the $d$-dimensional connected component in $X$ containing $\sigma$, i.e.,
\[
\mathcal{C}(\sigma) := \{ \sigma' \in S_d(X) \: : \: \sigma \overset{\sss{d}}{\longleftrightarrow} \sigma' \}.
\]
Furthermore, we write $\mathcal{C}_{(1)}, \mathcal{C}_{(2)}, \dots$ for the connected components in $X$ sorted in the decreasing order of the number of $(d-1)$-dimensional simplices they contain with ties being broken arbitrarily. We write $\mathcal{C}_{\mathrm{max}} := \mathcal{C}_{(1)}$. For example, for $d = 2$, two edges ($1$-simplices) are defined to be adjacent if there exists a triangle containing them and a $2$-dimensional path is a simplicial complex that consists of adjacent edges and triangles defining their adjacency. $2$-dimensional connected components are sorted by the number of edges in them (See Figure~\ref{fig: complex}).

\subsection{Probabilistic model of simplicial complexes}
\label{subsec: intro random simplicial complex}
We define the random simplicial complex model of our interest, called the multi-parameter random simplicial complex which was first introduced by Costa and Farber in \cite{Costa;Farber:2016}.

\begin{defi}[Multi-parameter random simplicial complex (MRSC)] \label{defn:MRSC}
The $d$-dimensional multi-parameter random simplicial complex model with vertex set $[n]$ and probability parameters $\bm{p} = (p_1, \dots, p_d) \in [0,1]^d$ is constructed recursively as follows. The set $[n]$ is the $0$-dimensional skeleton and each pair of vertices is connected by an edge with probability $p_1$, independent of all the other pairs. In general, for each $k$, let $S_{k}'$ denote the set of all potential $k$-dimensional simplices upon the constructed $(k-1)$-dimensional skeleton. Each $k$-dimensional simplex in $S_{k}^{'}$ is included with probability $p_{k}$. This goes on until $k = d$ or $S_{k}' = \emptyset$ for some $k \leq d$. This model is denoted by $\MRSC_d(n; \bm{p})$. Throughout, the abbreviation MRSC is used to refer to the multi-parameter random simplicial complex. 
\end{defi}

Finally, for $\MRSC(n; \bm{p})$, we define two key parameters $q_d$ and $r_d$ as follows:
\begin{equation} \label{eq: 1.2.1 q_k} 
q_d := \prod_{k = 1}^{d-1} p_{k}^{\binom{d}{k +1}}
\end{equation}
and
\begin{equation} \label{eq: 1.2.2 r_k} 
r_d := \prod_{k = 1}^{d} p_{k}^{\binom{d}{k}}.
\end{equation}
The two parameters can be interpreted as follows: Let $X_n \sim \MRSC(n; \bm{p})$, let $\sigma$ be an arbitrary $(d-1)$-dimensional simplex, and $\tau$ be an arbitrary $d$-dimensional simplex containing $\sigma$, both are on the vertex set $[n]$. Then we can write $q_d = \Pr \lp \sigma \in X_n \rp$ and $r_d = \Pr \lp \tau \in X_n \: | \: \sigma \in X_n \rp$.

\subsection{Main results}
\label{subsec: main results}
Our main results are going to be discussed under the following assumption for the parameters $q_d$ and $r_d$:

\begin{assumption}
\label{assumption: prob parameters}
The probability parameters $\bm{p} = (p_1, \dots, p_d) \in [0,1]^d$ governing the multi-parameter random simplicial complex satisfies the following:
\[ \label{eq: 1.3.1 assumption 1} \tag{1.3.1}
n^d q_d \to \infty, \text{ as } n\to \infty
\]
and
\[ \label{eq: 1.3.2 assumption 2} \tag{1.3.2}
n r_d \to \lambda, \text{ for some constant }\lambda > 0.
\]

\end{assumption}

\noindent One may notice that this regime is related to the so-called critical dimension of the MRSC~\cite{Costa;Farber:2017B}.

In fact, the condition (\ref{eq: 1.3.1 assumption 1}) means that the expected number of $(d-1)$-dimensional simplices in  MRSC tends to infinity. For the special case $d=2$, (\ref{eq: 1.3.1 assumption 1}) reduces to $p_1 = \omega( n^{-1/2})$. The condition (\ref{eq: 1.3.2 assumption 2}) means that the typical degree of a $(d-1)$-simplex in the given MRSC is $\lambda$. Thus, (\ref{eq: 1.3.2 assumption 2}) can be viewed as a generalization of the sparsity condition from the giant component problem in random graphs. These two conditions were introduced by Kanazawa in \cite{Kanazwa:2022}, and, for the Linial–Meshulam model, they essentially coincide with the assumption $p_d = \lambda/n$, which is the condition used in \cite{Linial;Peled:2016}. Recall that $s_{d-1}(\cdot)$ denotes the number of $(d-1)$-dimensional simplices in a given simplicial complex.

\begin{thm}
\label{thm: subcritical regime}
Let $X_n \sim \MRSC_d(n; \bm{p})$ satisfying Assumption~\ref{assumption: prob parameters}. Assume that $\lambda < 1/d$. Then, for any positive constants $C > d^2/ I_{\lambda}$ and $c < 1/I_{\lambda/2}$, where $I_{\lambda} := \lambda - 1 - \log \lambda$, it holds that
\[ \label{thm eq: subcritical} \tag{1.3.3}
\Pr \lp c \log n \leq s_{d-1}(\mathcal{C}_{\max}) \leq C \log n \rp = 1 - O(n^{-\delta}).
\]
\end{thm}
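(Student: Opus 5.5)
The proof has two halves: an upper bound via a breadth-first exploration dominated by a subcritical branching process, and a lower bound via a second moment count of components of size at least $c\log n$.

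\textbf{Upper bound.} Fix a $(d-1)$-simplex $\sigma$ and run the breadth-first exploration of Section~\ref{subsec: breadth-first exploration} on $\mathcal{C}(\sigma)$, conditional on $\sigma \in X_n$. Each active $(d-1)$-simplex $\sigma'$ is processed by revealing the $d$-simplices $\tau \supseteq \sigma'$. There are at most $n$ candidate vertices to add, and each $\tau$ is present with conditional probability at most $r_d$, given that all its $(d-1)$-faces lie in the complex. Since the faces other than $\sigma'$ carry extra constraints, the conditional probability can only be smaller, so this is a stochastic domination. Each found $\tau$ contributes at most $d$ new $(d-1)$-simplices, namely the faces of $\tau$ other than $\sigma'$. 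Hence the number of newly activated simplices per step is dominated by $d\cdot\mathsf{Bin}(n,r_d)$, whose mean is $d\lambda_n$ with $\lambda_n := n r_d \to \lambda < 1/d$.

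The event $s_{d-1}(\mathcal{C}(\sigma)) > k$ forces the exploration to survive $k$ steps. This requires $\sum_{i\le k} d B_i \ge k$ with $B_i$ i.i.d.\ $\mathsf{Bin}(n,r_d)$, that is $\mathsf{Bin}(nk, r_d) \ge k/d$. A Chernoff bound for binomials then gives
\[
\Pr\lp \mathsf{Bin}(nk,r_d)\ge k/d\rp \le \exp\lp -k\,\lambda\, I(1/(d\lambda)) /(\cdot)\rp,
\]
up to normalisation. Writing this in the form $\exp(-(k/d) I_{\cdot})$, we get a tail bound of the form $\exp(-k I_\lambda/d^2)$, possibly after a cruder estimate; this is where the constant $d^2/I_\lambda$ comes from. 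With $k = C\log n$ and $C > d^2/I_\lambda$, this probability is $n^{-d-\delta}$. A union bound over the at most $\binom{n}{d}$ simplices $\sigma$ gives the upper bound with probability $1-O(n^{-\delta})$. I expect the main technical point here to be matching the exact rate function to the constant stated in the theorem.

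\textbf{Lower bound.} Let $k = c\log n$ with $c < 1/I_{\lambda/2}$, and count the simplices $\sigma$ whose exploration reaches $k$ simplices. I would lower-bound the process by a branching process with offspring at least $\mathsf{Bin}(n-O(k), r_d)$. To keep the comparison clean, use only one new $(d-1)$-face per found $d$-simplex, discarding the interactions that would otherwise break independence. Surviving $k$ steps then has probability at least roughly $\Pr(\mathsf{Poi}(\lambda k)\ge k)\approx e^{-kI_\lambda}$. The theorem's $\lambda/2$ suggests the authors use a thinning, for instance restricting to half of the vertices to obtain independence, which gives the rate $e^{-kI_{\lambda/2}}$. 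This is $n^{-cI_{\lambda/2}}$, so $n^{1-cI_{\lambda/2}}\to\infty$ in polynomial order.

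To obtain a polynomially small failure probability, I would avoid a second moment over all simplices. Instead, choose $n^{\varepsilon}$ disjoint vertex blocks, or simply simplices with disjoint vertex sets, and run explorations sequentially in fresh vertex sets of size about $n/2$ each. This independence-by-thinning is likely why $\lambda/2$ appears. Then
\[
\Pr(\text{all fail}) \le \lp 1-n^{-cI_{\lambda/2}}\rp^{n^{1-o(1)}} ,
\]
which is tiny. We also need each starting simplex to be present, and Assumption~\eqref{eq: 1.3.1 assumption 1} guarantees many of them. I expect the main obstacle to be this lower bound: ensuring that the explorations use disjoint or fresh randomness, and handling dependence through the lower skeleton in the MRSC model, so that the survival probability is really at least $n^{-cI_{\lambda/2}}$ uniformly.
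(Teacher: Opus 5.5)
\textbf{The upper bound does not go through as written.} Evaluating your own Chernoff expression gives
\[
\Pr\bigl(\mathsf{Bin}(nk,r_d)\ge k/d\bigr)\le \exp\Bigl(-k\lambda_n+\frac kd+\frac kd\log(d\lambda_n)\Bigr)=\exp\Bigl(-\frac kd\,I_{d\lambda_n}\Bigr).
\]
So the rate is $I_{d\lambda}/d$, not $I_\lambda/d^2$.

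\begin{itemize}
\item A ``cruder estimate'' can only weaken an exponent. Here $I_{d\lambda}/d$ can be far below $I_\lambda/d^2$: for $d=2$, $\lambda=0.4$ they are about $0.012$ and $0.079$.
\item Even granting $\exp(-kI_\lambda/d^2)$, the choice $k=C\log n$ with $C>d^2/I_\lambda$ gives only $n^{-1-\delta}$, not $n^{-d-\delta}$. The union bound over $\binom nd$ simplices then fails to close for $d\ge 2$.
\end{itemize}

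What the argument actually proves is the upper bound for $C>d^2/I_{d\lambda}$. The paper's proof takes the same route as yours: union bound, domination by the total progeny of a $d\,\mathsf{Bin}(n,\lambda/n)$ branching process, then binomial large deviations. It reaches $d^2/I_\lambda$ only because \eqref{eq: 4.2.4} asserts $\Pr(\mathsf{Bin}(nk,\lambda/n)\ge k/d)\le e^{-(k/d)I_\lambda}$. That inequality fails for large $k$. Indeed $x\mapsto I_x$ is decreasing on $(0,1)$, so $I_{d\lambda}<I_\lambda$, while the tail equals $e^{-(k/d)I_{d\lambda}}$ up to polynomial factors.

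The stated constant cannot be rescued either.
\begin{itemize}
\item In $\LM_d(n,\lambda/n)$, a fixed $(d-1)$-simplex lies in a component of size at least $k=\Theta(\log n)$ with probability $e^{-(k/d)I_{d\lambda}(1+o(1))}$, by the hitting-time theorem for $d\,\mathsf{Poi}(\lambda)$.
\item The Erd\H{o}s--R\'enyi second-moment argument then yields components of size $a\log n$ for every $a<d^2/I_{d\lambda}$.
\item Consistently, $d^2/I_{d\lambda}\to\infty$ as $\lambda\uparrow 1/d$, whereas $d^2/I_\lambda$ stays bounded.
\end{itemize}
So the obstacle you flagged is real, and its resolution is to replace $I_\lambda$ by $I_{d\lambda}$.

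\textbf{The domination step is misjustified for MRSC.} Conditioning on other faces of $\tau$ being present raises its probability rather than lowering it. Given all its $(d-1)$-faces, $\tau$ is present with probability $p_d\ge r_d$. For a sibling simplex the probability is $p_{d,d-1}$, typically larger than $r_d$ by a polynomial factor. The paper claims $d\,\mathsf{Bin}(n,\lambda/n)$ domination only for forward and backward simplices, \eqref{eq: 4.2.1}. It separates the sibling count $H_k\leqst d\,\mathsf{Bin}(s_0(\mathcal{X}_{k-1}),p_{d,d-1})$ and argues this is negligible because $s_0(\mathcal{X}_{k-1})=O(\log n)$ and $p_{d,d-1}=O(n^{-\beta})$. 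To make this non-circular, stop the exploration after $C\log n$ steps. You need an argument of this kind; in $\LM_d$ it is automatic.

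\textbf{Lower bound: a different, workable route.} The paper proceeds as follows.
\begin{itemize}
\item It keeps only forward simplices.
\item It uses Lemma~\ref{lemma: forward estimate} to get $F_k\geqst d\,\mathsf{Bin}(n-C\log n,\lambda/(2n))$ on the event that all components are $O(\log n)$. So the $\lambda/2$ is slack for the $(1-o(1))$ factor in $p_k^F$, not a vertex-halving.
\item It then runs the second-moment method on $Z'_{\ge k}$, as in van der Hofstad's proof for $G(n,\lambda/n)$.
\end{itemize}

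Your scheme (one face per found simplex, sequential independent trials) explains the threshold $c<1/I_{\lambda/2}$ directly and gives a much smaller failure probability. To make it work:
\begin{itemize}
\item You need $n^{1-o(1)}$ trials, not $n^{\e}$, since $cI_{\lambda/2}$ may be close to $1$.
\item Get them by capping each exploration at $k$ steps and restarting on unused vertices. Then $n/\log^2 n$ explorations use only $o(n)$ vertices, so no halving is needed.
\item You still need a Lemma~\ref{lemma: forward estimate}-type lower bound on forward probabilities, conditional on the negative information revealed by earlier failed explorations.
\end{itemize}
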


For $d = 2$, we establish that the $2$-dimensional giant component emerges in the supercritical regime $\lambda > 1/2$. Under appropriate conditions, the giant component is unique and its size concentrates around a positive constant $\zeta_{\lambda}$. 

\begin{thm}
\label{thm: supercritical MRSC}
Let $X_n \sim \MRSC_2(n; \bm{p})$ with $\bm{p} = (p_1, p_2)$ satisfying the assumptions (\ref{eq: 1.3.1 assumption 1}) and (\ref{eq: 1.3.2 assumption 2}) with parametrization $p_1 = \Theta(n^{-\alpha_1})$ and $p_2 = \Theta(n^{-\alpha_2})$, $\alpha_1, \alpha_2 \geq 0$, i.e., $2 \alpha_1 + \alpha_2 = 1$. Assume that $1/2 < \lambda < 2^{\frac{2(1-\alpha_1)}{\alpha_1}}$ with convention that $2^{2/0} := \infty$. Then,
\[ \label{thm eq: supercritical C_max} \tag{1.3.4}
\frac{s_1 \lp \mathcal{C}_{\max} \rp}{q_1 \binom{n}{2}} \overset{\Pr}{\to} \zeta_{\lambda}
\]
and
\[ \label{thm eq: supercritical C_2} \tag{1.3.5}
\frac{s_1 \lp \mathcal{C}_{(2)} \rp}{q_2 \binom{n}{2}} \overset{\Pr}{\to} 0
\]
as $n \to \infty$. Here, $\zeta_{\lambda}$ is given by $\zeta_{\lambda} = 1 - \gamma_{\lambda}$, where $\gamma_{\lambda}$ is the smallest solution to $\gamma = e^{-\lambda ( 1- \gamma^2)}$. In particular, when $\alpha_1 = 0$, i.e., $X_n \sim \LM_2(n, \lambda/n)$, the result holds for the entire supercritical regime.
\end{thm}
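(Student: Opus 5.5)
The plan follows van der Hofstad's local-weak-convergence framework, lifted to simplicial complexes, in four steps. Throughout, $\zeta_\lambda$ is the survival probability of a branching process whose offspring count is $2\times\mathsf{Poi}(\lambda)$.

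\textbf{Step 1: local limit.} Invoke Theorem~\ref{thm: LWC in Prob of MRSC}: $X_n$ converges locally weakly in probability, rooted at a uniform edge. The limit is the tree-like $2$-complex in which every edge lies in $\mathsf{Poi}(\lambda)$ triangles. Each triangle contributes two new edges, so edges form a Galton--Watson tree with offspring $2\times\mathsf{Poi}(\lambda)$. Its extinction probability $\gamma$ solves $\gamma=e^{-\lambda(1-\gamma^2)}$. The tree is supercritical exactly when $2\lambda>1$, which gives $\zeta_\lambda>0$.

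\textbf{Step 2: the upper bound.} Fix $k$ and let $Z_{\ge k}$ be the number of edges $\sigma$ with $s_1(\mathcal{C}(\sigma))\ge k$. Local convergence in probability gives $Z_{\ge k}/(q_1\binom n2)\to\Pr(|\mathcal{C}(o)|\ge k)$ in probability; here $q_1\binom n2\sim s_1(X_n)$ by concentration, since $n^2q_1\to\infty$. Letting $k\to\infty$, this limit tends to $\zeta_\lambda$. If $s_1(\mathcal{C}_{\max})\ge k$, then $s_1(\mathcal{C}_{\max})\le Z_{\ge k}$. Together these give
\[
s_1(\mathcal{C}_{\max})\le(\zeta_\lambda+o(1))q_1\tbinom n2 .
\]

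\textbf{Step 3: the lower bound and uniqueness.} By Proposition~\ref{prop: side condition for the giant}, it suffices to verify the sprinkling-type condition. Pick two uniform edges $\sigma_1,\sigma_2$ and condition on $|\partial B_r(\sigma_i)|\ge k$. The condition requires
\[
\lim_{k\to\infty}\limsup_n \frac{1}{(q_1\binom n2)^2}\,\E\bigl[\#\{(\sigma_1,\sigma_2): |\partial B_r(\sigma_i)|\ge k,\ \sigma_1\not\overset{\sss 2}{\longleftrightarrow}\sigma_2\}\bigr]=0 .
\]
Given this, the proposition yields both \eqref{thm eq: supercritical C_max} and \eqref{thm eq: supercritical C_2}. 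The bound on $\mathcal{C}_{(2)}$ also follows, because two giants would contribute $\Theta(1)$ to the left-hand side.

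\textbf{Step 4: the connection estimate (main obstacle).} Run the breadth-first exploration of Section~\ref{subsec: breadth-first exploration} from both edges. In the first phase, forward exploration of fresh vertices makes the explored edge sets grow like the supercritical branching process. By the concentration bounds of Section~\ref{subsec: breadth-first exploration-stochastic properties}, the explored vertex sets reach size $\varepsilon n$ with probability tending to $1$ once $k$ is large.

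Merely touching $\Theta(n)$ vertices does not connect the two components, since a triangle needs an explored edge of one side together with a third vertex. We therefore switch to backward exploration. This counts triangles formed by an active edge of cluster 1 and a present vertex pair, creating new edges among already-discovered vertices. We track the number $E_t$ of present edges spanned by the explored vertex set. Since $E_t$ grows like $p_1|V_t|^2$, we aim to show it exceeds $\omega(1/(np_2))=\omega(n^{\alpha_2})$ times the boundary. The target is that each cluster accumulates $\gg n^{1+\alpha_2}p_1^{\,\cdot}$ active edges, enough that the expected number of triangles joining an active edge of cluster 1 to one of cluster 2 tends to infinity. A second-moment or Janson bound then shows such a triangle exists w.h.p.

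The delicate point is dependence. Edges incident to the explored set have been partially revealed, and the count of such edges itself branches, roughly with mean $\lambda p_1$-weighted growth. We need this auxiliary growth to stay dominated, so that conditioning does not destroy independence of the unrevealed triangles. That is where the restriction $\lambda<2^{2(1-\alpha_1)/\alpha_1}$ should enter: it makes an exponential-moment or martingale bound on the revealed-edge process close. For $\alpha_1=0$ there is no edge randomness, the restriction is vacuous, and the result holds for the whole supercritical regime. I would first attempt this step by comparing the revealed-edge count with a Galton--Watson process of mean $\lambda 2^{-2(1-\alpha_1)/\alpha_1}$-type growth, and bounding it via the probabilistic estimates of Section~\ref{subsec: breadth-first exploration-stochastic properties}.
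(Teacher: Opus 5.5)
Your Steps 1--3 follow the paper's framework: Theorem~\ref{thm: LWC in Prob of MRSC}, Proposition~\ref{prop: upper bound for the giant}, and the reduction of both the lower bound and the $\mathcal{C}_{(2)}$ statement to condition \eqref{eq: 2.2.3} via Proposition~\ref{prop: side condition for the giant}. The gap is Step 4. That is where essentially all of the work lies, and you only sketch it, with an indefinite target (``$\gg n^{1+\alpha_2}p_1^{\,\cdot}$'') and a connection mechanism that does not fit the exploration. To keep the unrevealed triangles conditionally independent, the paper explores $\mathcal{C}(\sigma_2)$ avoiding all vertices of the explored part of $\mathcal{C}(\sigma_1)$. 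Then an active edge of one cluster and one of the other share no vertex, so no single triangle can join them. The cheapest connection is two adjacent triangles (three new edges, two new triangles), with probability of order $p_1^3p_2^2=(p_1^2p_2)^2/p_1\asymp\lambda^2n^{-2+\alpha_1}$.

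The correct target is therefore $y_i n^{1-\alpha_1/2}$ active edges in each cluster with $y_1y_2\lambda^2$ large. This is followed by a second-moment bound whose overlapping terms (pairs of pairs sharing a vertex) are again controlled by vertex degrees. Reaching that scale needs neither $\varepsilon n$ explored vertices (for $\alpha_1>0$ only $O(n^{1-\alpha_1/2})=o(n)$ vertices are ever touched) nor a separate backward phase. Provided degrees are controlled (see below), Corollary~\ref{cor: probability of exploration processes} bounds forward plus backward edges jointly below by $2\mathsf{Bin}(n-o(n),\lambda(1-o(1))/n)$, however many vertices have been seen. A gambler's-ruin martingale $q_n^{Z_k}$ then shows that local survival to radius $r$ forces survival for $\delta n^{1-\alpha_1/2}$ steps with probability $\ge 1-q_n^{r}$. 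A martingale-plus-drift argument with slope $2\lambda-1$ gives linear growth of the active set up to $Tn^{1-\alpha_1/2}$ steps.

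The most important missing input is what makes that lower bound valid. The correction terms in $p_k^F$ and $p_k^B$ are of order $\deg^{\mathcal{X}_{k-1}}_{0,1}(\rho)\,p_1p_2$ with $p_1p_2\asymp n^{-(1-\alpha_1)}$. So one needs $\max_v\deg^{\mathcal{X}_{k-1}}_{0,1}(v)=o(n^{1-\alpha_1})$ w.h.p.\ for all $k\le Tn^{1-\alpha_1/2}$ (Lemma~\ref{lemma: upper bounds for degrees d =2}). This is exactly where $\lambda<2^{2(1-\alpha_1)/\alpha_1}$ enters, and not through a Galton--Watson process of mean $\lambda 2^{-2(1-\alpha_1)/\alpha_1}$.

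Exploring an edge $\{v,u\}$ creates on average $\lambda$ new edges through $v$. Hence the edges at $v$ that descend through edges at $v$ form an embedded branching process of mean $\lambda$ inside the mean-$2\lambda$ exploration. When the exploration has $n^{1-\alpha_1/2}$ edges, i.e.\ after $R_n\approx\log_{2\lambda}n^{1-\alpha_1/2}$ generations, this degree is about $\lambda^{R_n}=n^{(1-\alpha_1/2)\log\lambda/\log(2\lambda)+o(1)}$. That exponent is below $1-\alpha_1$ iff $\lambda<2^{2(1-\alpha_1)/\alpha_1}$. The paper makes this rigorous with moment bounds and a union bound over $v$, plus Azuma for triangles through edges not containing $v$.

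The same computation shows why your detour through $\varepsilon n$ vertices is costly. It needs degree control up to $\Theta(n)$ explored edges, which this argument only gives for $\lambda<2^{(1-\alpha_1)/\alpha_1}$, a strictly smaller range than the theorem claims.
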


Additionally, we obtain the following discontinuous phase transition for the number of vertices in the largest $d$-dimensional connected component in $X_n \sim \LM_d(n, \lambda/n)$:

\begin{thm}
\label{thm: vertex}
Let $X_n \sim \LM_d(n, \lambda/n)$. If $\lambda < 1/d$, then
\[ \label{thm eq: vertex subcritical} \tag{1.3.6}
\frac{s_0 \lp \mathcal{C}_{\max} \rp}{n} \overset{\Pr}{\to} 0
\]
as $n \to \infty$. If $\lambda > 1/d$, then
\[ \label{thm eq: vertex supercritical C_max} \tag{1.3.7}
\frac{s_0\lp \mathcal{C}_{\max} \rp}{n} \overset{\Pr}{\to} 1
\]
and
\[ \label{thm eq: vertex supercritical C_2} \tag{1.3.8}
\frac{s_0 \lp \mathcal{C}_{(2)} \rp}{n} \overset{\Pr}{\to} 0
\]
as $n \to \infty$.
\end{thm}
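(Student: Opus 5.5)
The plan is to prove the two regimes separately, using Theorem~\ref{thm: subcritical regime} for $\lambda<1/d$ and Theorem~\ref{thm: supercritical MRSC} (with its $d$-dimensional analogue) for $\lambda>1/d$. In $\LM_d(n,\lambda/n)$ the full $(d-1)$-skeleton is present, so $q_d=1$ and $s_{d-1}=\binom{n}{d}$.

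\emph{Subcritical case.} By Theorem~\ref{thm: subcritical regime}, with high probability $s_{d-1}(\mathcal{C}_{\max})\le C\log n$. Every vertex of $\mathcal{C}_{\max}$ lies in some $(d-1)$-simplex of the component, and each such simplex has $d$ vertices. Hence $s_0(\mathcal{C}_{\max})\le d\,C\log n=o(n)$, which gives (\ref{thm eq: vertex subcritical}).

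\emph{Supercritical case, showing (\ref{thm eq: vertex supercritical C_max}).} I use the concentration $s_{d-1}(\mathcal{C}_{\max})/\binom{n}{d}\to\zeta>0$, where $\zeta$ is the survival probability of the branching process with offspring $d\times\mathsf{Poi}(\lambda)$. For $d=2$ this is Theorem~\ref{thm: supercritical MRSC} with $\alpha_1=0$, where the paper states it holds on the whole supercritical range; for general $d$ I would invoke the paper's general concentration framework (Proposition~\ref{prop: side condition for the giant} together with local weak convergence). I then argue by first moment. Fix a vertex $v$ and let $A_v$ be the set of $(d-1)$-simplices containing $v$; there are $\binom{n-1}{d-1}$ of them. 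If $v\notin\mathcal{C}_{\max}$, then no element of $A_v$ lies in the giant. Local weak convergence shows that membership in the giant is asymptotically a local event: it is the event that the local exploration survives to a large depth $K$. Two disjointly chosen simplices of $A_v$ have asymptotically independent neighbourhoods, because their local trees share no $d$-simplices with high probability. So for any fixed $m$ I pick $m$ simplices in $A_v$ sharing only the vertex $v$, and the probability that none of them is in the giant is at most $(1-\zeta)^m+o(1)$. By exchangeability, $\E[(n-s_0(\mathcal{C}_{\max}))/n]\le(1-\zeta)^m+o(1)$, and letting $m\to\infty$ gives (\ref{thm eq: vertex supercritical C_max}).

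\emph{Showing (\ref{thm eq: vertex supercritical C_2}).} The component $\mathcal{C}_{(2)}$ is disjoint in $(d-1)$-simplices from $\mathcal{C}_{\max}$, but not necessarily in vertices, so I bound it directly. A vertex in $\mathcal{C}_{(2)}$ lies in a $(d-1)$-simplex whose component has size $o(n^d)$ by (\ref{thm eq: supercritical C_2}). The step needing care is that such a component must still cover only $o(n)$ vertices. I would show that with high probability every non-giant component has at most $O(\log n)$ $(d-1)$-simplices. The route is to run the exploration process and show that an exploration which survives past $K\log n$ steps joins the giant with probability $1-o(n^{-d})$, then take a union bound over all $(d-1)$-simplices.

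I expect this last step to be the main obstacle: uniformly controlling all non-giant components requires the refined forward/backward exploration estimates rather than only the averaged conclusions that local weak convergence provides. A weaker fallback is to use local weak convergence for the set of $(d-1)$-simplices lying in components of size at most $K$, and to show that vertices all of whose incident simplices lie in finite components form an $o(n)$ fraction. This is exactly the estimate from the supercritical argument above, and it suffices when combined with $\mathcal{C}_{(2)}$ being non-giant.
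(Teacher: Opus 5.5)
Your proof of (\ref{thm eq: vertex subcritical}) is the same as the paper's. Your proof of (\ref{thm eq: vertex supercritical C_max}) is correct but takes a different route. The paper explores $\mathcal{C}_{\max}$ from one of its simplices. Using a martingale/drift decomposition and Gronwall's inequality, it shows that the vertex count after $\lfloor tn\rfloor$ steps is $n(1-e^{-\lambda t})+o_{\Pr}(n)$. For this it needs only $s_{d-1}(\mathcal{C}_{\max})\ge\delta n^d$ w.h.p., so that the exploration outlives $Tn$ steps. Your vertex-centric bound $\Pr(v\notin\mathcal{C}_{\max})\le(1-\zeta)^m+o(1)$ needs more input:
\begin{itemize}
\item full concentration of $s_{d-1}(\mathcal{C}_{\max})/\binom{n}{d}$, so that $\mathcal{C}_{\max}$ differs from the set of simplices with $K$-local survival on a set of density $\zeta_{\geq K}-\zeta$;
\item an $m$-point version of Step~3 in the proof of Theorem~\ref{thm: LWC in Prob of MRSC}.
\end{itemize}
In return, it never treats the exploration increments as exact binomials after conditioning on the global event $\sigma\in\mathcal{C}_{\max}$. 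It also exposes the mechanism of the jump: each vertex gets $\binom{n-1}{d-1}\to\infty$ nearly independent chances to touch the giant. For $d\ge3$, however, appealing to Proposition~\ref{prop: side condition for the giant} does not cite a proved fact. Its hypothesis (\ref{eq: 2.2.3}) is verified in the paper only for $d=2$, and the paper's proof has the same dependency.

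The genuine gap is (\ref{thm eq: vertex supercritical C_2}). Your main route requires that an exploration surviving $K\log n$ steps joins the giant with probability $1-o(n^{-d})$. You do not supply this, and nothing in the paper comes close: Lemmas~\ref{lemma: local survival implies larger survival}--\ref{lemma: connection happens} give connection with probability $1-\e$ given local survival, and only for $d=2$.

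The fallback fails for exactly the reason you flagged yourself. The estimate from your argument for (\ref{thm eq: vertex supercritical C_max}) counts vertices with \emph{no} incident simplex in $\mathcal{C}_{\max}$. A vertex of $\mathcal{C}_{(2)}$ will typically also be a vertex of $\mathcal{C}_{\max}$, since by (\ref{thm eq: vertex supercritical C_max}) almost all vertices are. So that estimate gives no control of $s_0(\mathcal{C}_{(2)})$.

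Local weak convergence together with $s_{d-1}(\mathcal{C}_{(2)})=o_{\Pr}(n^d)$ only yields $s_0(\mathcal{C}_{(2)})\le d\,s_{d-1}(\mathcal{C}_{(2)})=o_{\Pr}(n^d)$, which is useless. A single tree-like component with $\Theta(n)$ simplices has $\Theta(n)$ vertices, yet it is an $O(n^{1-d})$ fraction of all $(d-1)$-simplices. Any averaged local statistic cannot see it.

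What is needed is a bound on individual components. For example, for a fixed $\sigma$, show $\Pr\bigl(s_{d-1}(\mathcal{C}(\sigma))\ge\e n,\ \sigma\notin\mathcal{C}_{\max}\bigr)=o(n^{1-d})$. Markov's inequality applied to the number of such $\sigma$ then forces $s_{d-1}(\mathcal{C}_{(2)})<\e n$ w.h.p. Be aware that the paper's own proof stops after (\ref{thm eq: vertex supercritical C_max}) and contains no argument for (\ref{thm eq: vertex supercritical C_2}), so there is nothing there to borrow.
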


\section{Local-weak convergence and giant higher-order components}
\label{sec: prelim}

In this section, we introduce the local-weak convergence of random simplicial complexes as an extension of the local-weak convergence of random graphs. First, we formally define the local-weak convergence of random simplicial complexes and discuss some general results about $d$-dimensional connected components as a result of it, such as the number of $d$-dimensional connected components and the length of the typical path joining two $(d-1)$-simplices. After that, we introduce how the emergence and the concentration of the size of the $d$-dimensional giant component are characterized via local-weak convergence.

\subsection{Local-weak convergence in probability of random simplicial complexes}
\label{subsec: LWC}

Local-weak convergence was introduced by Benjamini and Schramm~\cite{Benjamini;Schramm:2001} and Aldous and Steele~\cite{Aldous;Steele:2004} independently. Roughly speaking, local-weak convergence means that any finite neighbourhood of the typical vertex is convergent. In most cases, the limiting object of local-weak convergence has relatively simpler shapes, such as trees, and the asymptotic behavior of a given sequence of graphs can be analyzed via the properties of local limit. For a comprehensive overview of local-weak convergence of random graphs, we refer readers to \cite{Hofstad:2024}.

Local-weak convergence of random simplicial complexes has been developed relatively recently. The series of papers \cite{Aronshtam;Linial;Luczak;Meshulam:2013,Aronshtam;Linial:2015,Aronshtam;Linial:2016, Linial;Peled:2016} formulated the local-weak convergence of the Linial-Meshulam random simplicial complex $Y_d(n, p)$ to discuss its Betti numbers. Particularly, in \cite{Linial;Peled:2016}, the local-weak convergence of $Y_d(n, p)$ with $p = \lambda/n$ was formulated as the local-weak convergence of random bipartite graphs whose vertex sets are the sets of $(d-1)$-dimensional simplices and $d$-dimensioal simplices and edges given by the incidence between them. Local-weak convergence has also been extended to MRSCs. In \cite{Kanazwa:2022}, the local-weak convergence of MRSCs under Assumption~\ref{assumption: prob parameters} was established and used to analyze the Betti number of them.

We introduce the concept called the local-weak convergence in probability of random simplicial complexes. A pair $(X, \sigma)$ of a $d$-dimensional simplicial complex and a $(d-1)$-dimensional simplex $\sigma$ in $X$ is called a $(d-1)$\emph{-rooted simplicial complex}. Recall that, for a given simplicial complex $X$ and any $0 \leq k \leq d$, $S_k(X)$ denotes the set of all $k$-dimensional simplices in $X$. We say two $(d-1)$-rooted simplicial complexes $(X_1, \sigma_1)$ and $(X_2, \sigma_2)$ are isomorphic, denoted by $(X_1, \sigma_1) \cong (X_2, \sigma_2)$, if there is a bijection $\phi: S_0(X_1) \to S_0(X_2)$ that preserves all simplices and the roots, i.e., $\{v_0, \dots, v_k\} \in X_1$ if and only if $\{\phi(v_0), \dots, \phi(v_k)\} \in X_2$ and $\phi: \sigma_1 = \{ v_0, \dots, v_{d-1} \} \mapsto \{\phi(v_0), \dots, \phi(v_{d-1})\} = \sigma_2$. Additionally, for any simplex $\sigma$, we define $Q(\sigma)$ to be the minimal simplicial complex containing $\sigma$, i.e., the simplicial complex obtained by the downward closure of $\sigma$. For example, if $\sigma = \{v_0, v_1, v_2\}$, then
\[
Q(\sigma) = \left\{ \{ v_0 \}, \{ v_1 \}, \{v_2\}, \{v_0, v_1\}, \{v_1, v_2\}, \{v_2, v_0\}, \{v_0, v_1, v_2\} \right\}.
\]

For each $0 \leq k \leq d-1$ and $\sigma \in S_k(X)$, the \emph{(up-)degree} of $\sigma$ in $X$ is defined to be the number of $(k+1)$-dimensional simplices in $X$ containing $\sigma$ as a $k$-dimensional sub-simplex. 
We say that a simplicial complex $X$ is \emph{locally-finite above dimension} $k$ if every $p$-dimensional simplex $\sigma \in S_p(X)$ with $p \geq k$ has finite up-degree. Now, let $X$ be a $d$-dimensional simplicial complex, $\sigma \in S_{d-1}(X)$ and $r \in \N \cup \{0 \}$. The neighborhood $B_X(\sigma; r)$ of $\sigma$ in $X$ with radius $r$ is defined in the following way: First, let $B(\sigma; 0) := Q(\sigma)$. For a constructed $B_X(\sigma; \ell)$ with $0 \leq \ell \leq r-1$, let us define the boundary $\partial B_X(\sigma; \ell+1)$ by
\[ \label{def: boundary of nbd 2.1.1} \tag{2.1.1}
\partial B_X(\sigma; \ell + 1) := \bigcup_{\sigma' \in \mathcal{J}_{\ell}} \lb Q(\sigma') \cap B_X(\sigma; \ell)^c \rb,
\]
where $\mathcal{J}_{\ell}$ denotes the set of all $d$-dimensional simplices in $X \setminus B_X(\sigma; \ell)$ that contain at least one $(d-1)$-dimensional simplex in $B_X(\sigma; \ell)$. In other words, a $d$-dimensional simplicial complex $\tau \in \mathcal{J}_{\ell}$ if and only if $\tau \not\in B_X(\sigma; \ell)$ and there exists a $(d-1)$-dimensional simplex $\sigma' \in B_X(\sigma; \ell)$ such that $\sigma' \subseteq \tau$. We set $B_X(\sigma; \ell) := B_X(\sigma; \ell -1) \cup \partial B_X(\sigma; \ell)$. We would like to remark that if $X$ is locally-finite above dimension $(d-1)$, every neighborhood of a $(d-1)$-dimensional simplex $\sigma$ with finite radius is a finite simplicial complex. In fact, $\sigma$ is contained in up to finitely many $d$-dimensional simplices, and each of $(d-1)$-simplices contained in them---they are $d-1$-simplices that are apart from $\sigma$ by distance $1$---is again contained up to finitely many $d$-dimensional simplices, and so forth. Thus, for any finite $r > 0$, $B_X(\sigma; r)$ is either the singleton set $\{\sigma\}$ or a pure $d$-dimensional simplicial complex containing finitely many $d$-dimensional simplices.

\begin{remark}
We would like to remark that a lower dimensional local structure of a locally finite simplicial complex can be dense. To illustrate this, let $X$ be a $2$-dimensional simplicial complex constructed as follows. We start from an edge $\{v_0, v_1\}$, and a $\{v_0, v_1, v_2\}$. Among two two newly added edges, pick the one containing $v_0$, and add a new triangle to it, say $\{v_0, v_2, v_4\}$. Repeat this process so that every new triangle will contain $v_0$. Then, the obtained simplicial complex $X$ is locally finite above dimension $1$, and every finite-radius neighborhood of an edge is a finite simplicial complex. However, the degree of the vertex $v_0$ in the $1$-skeleton of $X$ is infinite.
\end{remark}

We write $\mathcal{S}_{d-1}$ for the set of the isomorphism classes of $(d-1)$-rooted simplicial complexes that are locally finite above dimension $(d-1)$. The metric $d_{\mathcal{S}_{d-1}}$ on $\mathcal{S}_{d-1}$ is given by
\[ \label{def: metric 2.1.2} \tag{2.1.2}
d_{\mathcal{S}_{d-1}} \lp (X_1, \sigma_1), (X_2, \sigma_2) \rp := \frac{1}{1 + R^*},
\]
where
\[
R^* := \sup \{R \geq 0 \: : \: B_{X_1}(\sigma_1; R) \cong B_{X_2}(\sigma_2; r) \}.
\]
The metric space $(\mathcal{S}_{d-1}, d_{\mathcal{S}_{d-1}})$ is a Polish space. This property has been taken for granted in most literature considering the local-weak convergence of simplicial complexes. However, we have not found an explicit proof for this. To make the paper self-contained, we provide the proof of this property in Appendix~\ref{appendix: formulation of LWC}.

Now, we are ready to define the local-weak convergence of random simplicial complexes. we introduce a more stronger notion called local-weak convergence in probability.

\begin{defi}[The local-weak convergence in probability of simplicial complexes]
We say that a sequence of $d$-dimensional finite random simplicial complexes $(X_n)_{n \geq 1}$ \textit{converges locally weakly in probability} to a $(d-1)$-rooted random simplicial complex $(X, o)$ with (possibly random) law $\mu$, when, for every $r \geq 0$ and $H_* \in \mathcal{S}_{d-1}$,
\[ \label{eq: 2.1.3 local conv} \tag{2.1.3}
\frac{1}{s_{d-1} \lp X_n \rp} \sum_{\sigma \in S_{d-1}(X_n)} \1 \left\{ B_{X_n}(\sigma; r) \cong H_* \right\} \overset{\Pr}{\to} \mu \lp B_X(o; r) \cong H_* \rp.
\]
\end{defi}

Typically, we say that $(X_n)_{n \geq 1}$ converges locally weakly if the expectation of the left-hand side of \eqref{eq: 2.1.3 local conv} converges to the right-hand side of some deterministic law $\mu$. The notion of local-weak convergence in probability provided here is stronger than the usual notion in this sense, and provides more suitable framework to discuss the concentration of functionals of random simplicial complexes.

\subsection{Local-weak convergence and emergence of the giant component in random simplicial complexes}
\label{subsec: LWC and connected components}

In this subsection, we consider how the local-weak convergence of random simplicial complexes leads us to the emergence and the size of the giant $d$-dimensional connected component in them. Although the general properties we discuss here can be viewed as straightforward extensions of their counterparts for random graphs, we provide their proof in Appendix \ref{appendix: formulation of LWC} to make the present paper self-contained. First, the number of $d$-dimensional connected components can be considered. 

\begin{prop}
\label{prop: number of components}
Let $(X_n)_{n \geq 1}$ be a sequence of random finite $d$-dimensional simplicial complexes that converges locally weakly in probability to a random $(d-1)$-rooted simplicial complex $(X, o) \sim \mu$. Let $C_n$ denote the number of $d$-dimensional connected components in $X_n$. Then, it holds that
\[ \label{eq: 2.2.1} \tag{2.2.1}
\frac{C_n}{s_{d-1}(X_n)} \overset{\Pr}{\to} \E_{\mu} \lb \frac{1}{s_{d-1}(\mathcal{C}(o))} \rb,
\]
where $\mathcal{C}(o)$ denotes the $d$-dimensional connected component in $X$ containing $o$. Here, by convention, we set $1/s_{d-1}(\mathcal{C}(o)) = 0$ when $s_{d-1}(\mathcal{C}(o)) = \infty$.
\end{prop}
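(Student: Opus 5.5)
The plan is to adapt the standard graph argument (cf.\ van der Hofstad~\cite{Hofstad:2024}) via the identity
\[
C_n = \sum_{\sigma \in S_{d-1}(X_n)} \frac{1}{s_{d-1}(\mathcal{C}(\sigma))},
\]
which holds because each $d$-dimensional component $\mathcal{C}$ contributes exactly $s_{d-1}(\mathcal{C}) \cdot (1/s_{d-1}(\mathcal{C})) = 1$ to the sum. Hence $C_n/s_{d-1}(X_n)$ is the empirical average over a uniformly chosen root $\sigma$ of the bounded functional $h(X,\sigma) := 1/s_{d-1}(\mathcal{C}(\sigma)) \in [0,1]$. The statement would follow from \eqref{eq: 2.1.3 local conv} if $h$ were continuous with respect to $d_{\mathcal{S}_{d-1}}$. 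It is not, so we truncate.

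For fixed $k \in \N$, set
\[
h_k(X,\sigma) := \frac{1}{s_{d-1}(\mathcal{C}(\sigma))}\,\1\{s_{d-1}(\mathcal{C}(\sigma)) \le k\}.
\]
The key observation is that $h_k(X,\sigma)$ is determined by $B_X(\sigma;k)$. A component with at most $k$ simplices of dimension $d-1$ has $d$-dimensional diameter less than $k$. So the event $\{s_{d-1}(\mathcal{C}(\sigma)) \le k\}$, together with the value $s_{d-1}(\mathcal{C}(\sigma))$, can be read off from whether the ball stops growing, i.e.\ whether $B_X(\sigma;k) = B_X(\sigma;k+1)$ in terms of $(d-1)$-simplices. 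To be safe, we use radius $k+1$. Thus $h_k$ is a finite linear combination of indicators $\1\{B_X(\sigma;k+1)\cong H_*\}$ over finitely many isomorphism classes $H_*$ (those with at most $k$ simplices of dimension $d-1$ and no further growth). Note that these classes have bounded $(d-1)$-simplex count, and their $d$-simplex count is then also bounded, so there are finitely many. By \eqref{eq: 2.1.3 local conv},
\[
\frac{1}{s_{d-1}(X_n)} \sum_{\sigma} h_k(X_n,\sigma) \overset{\Pr}{\to} \E_\mu[h_k(X,o)].
\]

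It remains to control the error $h - h_k \in [0, 1/k)$, which holds deterministically since $h - h_k$ is nonzero only when $s_{d-1}(\mathcal{C}) > k$. Therefore both the empirical average and its $\mu$-expectation differ from their truncated versions by at most $1/k$, including the case $s_{d-1}(\mathcal{C}(o))=\infty$ under the stated convention. Also, $\E_\mu[h_k(X,o)] \to \E_\mu[h(X,o)]$ by monotone convergence. A standard $\varepsilon/3$ argument then gives \eqref{eq: 2.2.1}: choose $k > 3/\varepsilon$, then $n$ large.

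The main point requiring care is the locality step: verifying rigorously, with the paper's definition of $B_X(\sigma;r)$ through $\partial B$ and the $Q(\cdot)$ closures, that ``$\mathcal{C}(\sigma)$ has at most $k$ simplices of dimension $d-1$'' is measurable with respect to $B_X(\sigma;k+1)$. One must also check that the number of $(d-1)$-simplices of $\mathcal{C}(\sigma)$ equals the number of $(d-1)$-simplices of the stabilized ball. This needs the ball to contain exactly the $(d-1)$-simplices reachable by $d$-paths, together with faces of those $d$-simplices. These faces are themselves $(d-1)$-simplices adjacent through the $d$-simplex, so they coincide. I would verify this by induction on the radius $\ell$, showing that the $(d-1)$-simplices of $B_X(\sigma;\ell)$ are exactly those at $d$-distance at most $\ell$ from $\sigma$.
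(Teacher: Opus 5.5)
Your argument is correct, but it takes a somewhat different route from the paper's. The paper proves that $h(X,\tau)=1/s_{d-1}(\mathcal{C}(\tau))$ is itself bounded and continuous on $(\mathcal{S}_{d-1},d_{\mathcal{S}_{d-1}})$. The two cases are:
\begin{itemize}
\item an infinite component forces $s_{d-1}(B_{X'}(\tau';R))\ge R+1$ for every nearby $(X',\tau')$;
\item a finite component equal to $B_X(\tau;R)$ is reproduced exactly by any complex agreeing with $X$ up to radius $R+1$.
\end{itemize}
The paper then asserts that local weak convergence in probability gives $\E[h(X_n,\sigma_n)\mid X_n]\to\E_\mu[h]$. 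So your remark that $h$ is \emph{not} continuous is wrong. Your own $h_k$ depend only on $B(\sigma;k+1)$, so they are locally constant, and $0\le h-h_k<1/k$ uniformly. Hence $h$ is a uniform limit of continuous functions and is continuous, which is exactly the paper's claim. The misstatement is harmless, since your truncation argument never uses discontinuity.

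What your route buys is self-containment relative to the definition the paper actually states. Condition (2.1.3) is formulated only for indicators $\1\{B(\sigma;r)\cong H_*\}$. The paper's passage from these to arbitrary bounded continuous functionals is taken for granted; it is the standard equivalence of the two formulations of local convergence. You need only finitely many such indicators plus a deterministic $1/k$ error. Finiteness holds because every vertex of the ball lies in one of its at most $k$ simplices of dimension $d-1$. The paper's route is shorter once that equivalence is accepted.

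Incidentally, radius $k$ already suffices. One has $\{s_{d-1}(\mathcal{C}(\sigma))\le k\}=\{s_{d-1}(B(\sigma;k))\le k\}$, with equal counts on this event, because each nonempty sphere at distances $0,\dots,k$ contributes at least one simplex of dimension $d-1$.
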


Next, we consider the size of the largest $d$-dimensional connected components. The size of the largest component is not a local functional, however, local-weak convergence in probability provides useful partial information about it: its upper bound.

\begin{prop}
\label{prop: upper bound for the giant}
Let $(X_n)_{n \geq 1}$ be a sequence of $d$-dimensional simplicial complexes with $s_{d-1}(X_n) \to \infty$ as $n \to \infty$. Assume that $X_n$ converges locally weakly in probability to $(X, o) \sim \mu$. Let $\zeta := \Pr \lp s_{d-1} \lp \mathcal{C}(o) \rp = \infty \rp$, the survival probability of the $d$-dimensional connected component of the root $o$ in $X$. Then, for every $\e > 0$, it holds that
\[ \label{eq: 2.2.2} \tag{2.2.2}
\Pr \lp s_{d-1} \lp \mathcal{C}_{\max} \rp \leq s_{d-1}(X_n) (\zeta + \e) \rp \to 1
\]
as $n \to \infty$.
\end{prop}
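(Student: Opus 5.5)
We follow the standard random graph argument (cf. van der Hofstad), with $(d-1)$-simplices playing the role of vertices. Fix $k \in \N$ and let
\[
Z_{\geq k} := \sum_{\sigma \in S_{d-1}(X_n)} \1\{ s_{d-1}(\mathcal{C}(\sigma)) \geq k \},
\]
the number of $(d-1)$-simplices whose $d$-dimensional component contains at least $k$ of them. There are two cases. If $s_{d-1}(\mathcal{C}_{\max}) \geq k$, then every $\sigma \in \mathcal{C}_{\max}$ is counted in $Z_{\geq k}$, so $s_{d-1}(\mathcal{C}_{\max}) \leq Z_{\geq k}$. Otherwise $s_{d-1}(\mathcal{C}_{\max}) < k$. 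Since $s_{d-1}(X_n) \to \infty$, in either case
\[
\frac{s_{d-1}(\mathcal{C}_{\max})}{s_{d-1}(X_n)} \leq \max\Big\{ \frac{Z_{\geq k}}{s_{d-1}(X_n)}, \frac{k}{s_{d-1}(X_n)} \Big\},
\]
and the second term tends to $0$. It therefore suffices to show $Z_{\geq k}/s_{d-1}(X_n) \overset{\Pr}{\to} \mu(s_{d-1}(\mathcal{C}(o)) \geq k)$, and then to choose $k$ so that this limit is below $\zeta + \e/2$.

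The main step is to show that $\{s_{d-1}(\mathcal{C}(\sigma)) \geq k\}$ is a local event, determined by $B_{X_n}(\sigma; k)$. By the definition of the neighborhood via $\mathcal{J}_\ell$, the $(d-1)$-simplices in $B_X(\sigma;\ell)$ are exactly those at $d$-dimensional path distance at most $\ell$ from $\sigma$. If the component has at least $k$ simplices, it is connected, so the exploration adds at least one new $(d-1)$-simplex at each step until it stops growing. Hence $B_X(\sigma; k-1)$ already contains at least $k$ of them. Conversely, if $B_X(\sigma;k-1)$ contains at least $k$ of them, they all lie in $\mathcal{C}(\sigma)$.

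So $\1\{s_{d-1}(\mathcal{C}(\sigma)) \geq k\} = \1\{ s_{d-1}(B_{X_n}(\sigma;k-1)) \geq k\}$. The right side is a sum of the indicators $\1\{B_{X_n}(\sigma; k-1) \cong H_*\}$ over a set of isomorphism classes $H_*$. This set may be infinite, because up-degrees are unbounded. Apply \eqref{eq: 2.1.3 local conv} to the complementary event $\{ s_{d-1}(B(\sigma;k-1)) < k\}$ instead. This event allows only finitely many classes $H_*$: there are fewer than $k$ simplices of dimension $d-1$, and each $d$-simplex has $d+1$ faces, so the number of $d$-simplices is bounded. The same holds for the lower skeleton, since the neighborhood is the downward closure of these simplices. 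Summing finitely many convergences in probability gives
\[
\frac{Z_{\geq k}}{s_{d-1}(X_n)} \overset{\Pr}{\to} \mu\big(s_{d-1}(\mathcal{C}(o)) \geq k\big).
\]
The same locality argument applies in the limit object $X$ under $\mu$, so both sides refer to the same event.

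Finally, the events $\{s_{d-1}(\mathcal{C}(o)) \geq k\}$ decrease in $k$ to $\{s_{d-1}(\mathcal{C}(o)) = \infty\}$. If $\mu$ is random, apply this to $\E$ of the random law, or to each realization, since convergence is in probability to a random limit. Hence we may fix $k$ with $\mu(s_{d-1}(\mathcal{C}(o))\geq k) \leq \zeta + \e/2$. Combining this with the previous two steps, with probability tending to one we get $s_{d-1}(\mathcal{C}_{\max}) \leq s_{d-1}(X_n)(\zeta+\e)$.

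The step needing the most care is the locality claim together with the finiteness of the relevant family of isomorphism classes. That is exactly where local finiteness above dimension $d-1$ and the specific neighborhood construction \eqref{def: boundary of nbd 2.1.1} enter.
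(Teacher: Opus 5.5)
Your proof is correct and follows the same route as the paper: bound $s_{d-1}(\mathcal{C}_{\max})$ by $Z_{\geq k}$, use that $\{s_{d-1}(\mathcal{C}(\sigma))\geq k\}$ is determined by a finite-radius neighborhood so that local weak convergence in probability gives $Z_{\geq k}/s_{d-1}(X_n)\to \mu(s_{d-1}(\mathcal{C}(o))\geq k)$, and then let $k\to\infty$. Two of your steps are more careful than the paper's write-up: the separate treatment of the case $s_{d-1}(\mathcal{C}_{\max})<k$, where the paper's unconditional claim $s_{d-1}(\mathcal{C}_{\max})\leq Z_{\geq k}$ fails, and the reduction to finitely many isomorphism classes via the complementary event; the only point to drop is the suggestion of passing to the expectation of a random $\mu$, which would not yield the stated bound, though with $\zeta$ deterministic as in the statement that case does not arise.
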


Local-weak convergence itself is not enough to identify the global information of the largest $d$-dimensional connected component. However, under the following additional conditions that governs how big local neighborhoods merge into a large connected component, we can identify the asymptotic size of the largest connected component.

\begin{prop}
\label{prop: side condition for the giant}
Let $(X_n)_{n \geq 1}$ be a sequence of random finite $d$-dimensional simplicial complexes that converges locally weakly in probability to $(X, o) \sim \mu$. Assume further that 
\[ \label{eq: 2.2.3} \tag{2.2.3}
\lim_{k \to \infty} \limsup_{n \to \infty} \Pr \lp \sigma_1 \overset{\sss{d}}{\longleftrightarrow} \sigma_2, s_{d-1}(\mathcal{C}(\sigma_1)), s_{d-1}(\mathcal{C}(\sigma_2)) \geq k \rp = 1,
\]
where $\sigma_1$ and $\sigma_2$ are chosen uniformly at random from $X_n$. Then, it holds that
\[
\frac{s_{d-1}(\mathcal{C}_{\max})}{s_{d-1}(X_n)} \overset{\Pr}{\to} \mu \lp s_{d-1}(\mathcal{C}(o)) = \infty \rp 
\]
and
\[
\frac{s_{d-1}(\mathcal{C}_{(2)})}{s_{d-1}(X_n)} \overset{\Pr}{\to} 0.
\]
\end{prop}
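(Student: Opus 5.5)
This follows van der Hofstad's argument for graphs, with $(d-1)$-simplices in place of vertices and $d$-dimensional components in place of connected components. Write $N_n := s_{d-1}(X_n)$, $\zeta := \mu(s_{d-1}(\mathcal{C}(o)) = \infty)$, and
\[
Z_{\geq k} := \sum_{\sigma \in S_{d-1}(X_n)} \1\{ s_{d-1}(\mathcal{C}(\sigma)) \geq k \}.
\]
The first step uses local-weak convergence in probability. The event $\{s_{d-1}(\mathcal{C}(\sigma)) \geq k\}$ is determined by $B_{X_n}(\sigma; k)$, since a component with at least $k$ simplices already shows at least $k$ of them within radius $k$. Hence $Z_{\geq k}/N_n \overset{\Pr}{\to} \mu(s_{d-1}(\mathcal{C}(o)) \geq k) =: \zeta_{\geq k}$, and $\zeta_{\geq k} \downarrow \zeta$ as $k \to \infty$. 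Proposition~\ref{prop: upper bound for the giant} gives the upper bound $s_{d-1}(\mathcal{C}_{\max}) \leq N_n(\zeta+\e)$ w.h.p., so only a matching lower bound and the statement for $\mathcal{C}_{(2)}$ remain.

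The core step is a second-moment-type identity. Consider
\[
P_k := \frac{1}{N_n^2} \sum_{\sigma_1,\sigma_2} \1\{ \sigma_1 \overset{\sss d}{\longleftrightarrow} \sigma_2,\ s_{d-1}(\mathcal{C}(\sigma_i)) \geq k \} = \frac{1}{N_n^2} \sum_{i:\, s_{d-1}(\mathcal{C}_{(i)}) \geq k} s_{d-1}(\mathcal{C}_{(i)})^2 .
\]
Also $Z_{\geq k}^2/N_n^2$ equals the analogous sum without the connectivity requirement. The difference is
\[
\frac{Z_{\geq k}^2}{N_n^2} - P_k = \frac{1}{N_n^2}\sum_{\sigma_1,\sigma_2} \1\{\sigma_1 \not\overset{\sss d}{\longleftrightarrow} \sigma_2,\ s_{d-1}(\mathcal{C}(\sigma_i)) \geq k\} \geq 0 .
\]
Its expectation, conditionally on $X_n$ with $\sigma_1,\sigma_2$ uniform, is the probability that $\sigma_1, \sigma_2$ both have components of size at least $k$ but are not connected. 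This is exactly the complement appearing in \eqref{eq: 2.2.3}. So \eqref{eq: 2.2.3}, read as the condition that both components are at least $k$ implies connection (I will interpret it this way, as in the graph case), shows that this probability tends to $0$ as $n \to \infty$ and then $k \to \infty$. Since the quantity is a nonnegative random variable, Markov's inequality turns the vanishing expectation into vanishing in probability. Combining with the first step, $P_k \geq \zeta_{\geq k}^2 - o_{\Pr}(1)$.

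The bound on $P_k$ now converts into information about the largest components. On one side,
\[
P_k \leq \frac{s_{d-1}(\mathcal{C}_{\max})}{N_n}\cdot \frac{Z_{\geq k}}{N_n},
\]
so
\[
\frac{s_{d-1}(\mathcal{C}_{\max})}{N_n} \geq \frac{P_k}{Z_{\geq k}/N_n} \geq \zeta_{\geq k} - o_{\Pr}(1) \geq \zeta - o_{\Pr}(1).
\]
Letting $k \to \infty$, this lower bound matches the upper bound from Proposition~\ref{prop: upper bound for the giant}. For the second component, write $x_1 = s_{d-1}(\mathcal{C}_{\max})/N_n$ and $x_2 = s_{d-1}(\mathcal{C}_{(2)})/N_n$. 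Then
\[
P_k \leq x_1^2 + x_2\,\bigl(Z_{\geq k}/N_n - x_1\bigr)
\]
(assuming $\mathcal{C}_{\max}$ has size at least $k$, which holds w.h.p. when $\zeta > 0$). The right side converges to $\zeta^2 + x_2(\zeta_{\geq k}-\zeta)$, which is at most $\zeta^2 + o(1)$ as $k \to \infty$. This alone does not force $x_2 \to 0$, so I will use a refined count instead. If $x_2 \geq \eta$, then
\[
\sum_i s_{d-1}(\mathcal{C}_{(i)})^2 \leq x_1^2 + x_2^2 \quad\text{while}\quad x_1 + x_2 \leq Z_{\geq k}/N_n \approx \zeta .
\]
Therefore $x_1^2 + x_2^2 \leq \zeta^2 - 2x_1x_2 + o(1)$. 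Since $x_1 \geq x_2 \geq \eta$, this contradicts $P_k \geq \zeta^2 - o(1)$. The case $\zeta = 0$ is immediate from the upper bound.

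I expect the main obstacle to be bookkeeping rather than any deep step. Two points need care. First, $N_n$ is random, so every limit has to be stated conditionally and then in probability. Second, the order of limits ($n \to \infty$ first, then $k \to \infty$) must be handled consistently, with the nonnegativity of $Z_{\geq k}^2/N_n^2 - P_k$ used to upgrade convergence of expectations to convergence in probability via Markov's inequality. I also need to check that the locality of $\{s_{d-1}(\mathcal{C}(\sigma)) \geq k\}$ holds in the simplicial setting as defined through $B_X(\sigma; r)$.
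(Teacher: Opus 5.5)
Your argument for $\mathcal{C}_{\max}$ is the paper's argument. You use the same $Z_{\geq k}$, the same bound $s_{d-1}(\mathcal{C}_{\max})/N_n \geq P_k/(Z_{\geq k}/N_n)$, and the same use of (2.2.3). You read (2.2.3) correctly as saying that the probability of $\sigma_1,\sigma_2$ having large but distinct components vanishes, which is what the paper actually uses in (B.17). Writing $Z_{\geq k}^2/N_n^2 - P_k$ as a nonnegative count of disconnected pairs and then applying Markov is, if anything, cleaner than the paper's handling of $X_{n,k}$.

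The step for $\mathcal{C}_{(2)}$ fails as written. The inequality $\sum_i s_{d-1}(\mathcal{C}_{(i)})^2 \leq x_1^2 + x_2^2$ goes the wrong way. After dividing by $N_n^2$, on $\{s_{d-1}(\mathcal{C}_{(2)})\geq k\}$ one has $P_k \geq x_1^2+x_2^2$. Your contradiction with $P_k \geq \zeta^2 - o(1)$ would need the reverse bound $P_k \leq x_1^2+x_2^2+o(1)$. That bound ignores $\mathcal{C}_{(3)},\mathcal{C}_{(4)},\dots$ and is not available.

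The repair is already in your text, and it is the paper's argument (B.21). Once $x_1 \to \zeta$ is proved, the constraint $x_1 + x_2 \leq Z_{\geq k}/N_n$ on $\{s_{d-1}(\mathcal{C}_{(2)}) \geq k\}$ gives
\[
x_2 \leq \frac{Z_{\geq k}}{N_n} - x_1 = \zeta_{\geq k} - \zeta + o_{\Pr}(1),
\]
which vanishes as $k \to \infty$. On the complementary event, $x_2 < k/N_n$, and $N_n \to \infty$ in probability when $\zeta > 0$, because $Z_{\geq k} > 0$ w.h.p.\ for every $k$.

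So your remark that the first bound ``does not force $x_2 \to 0$'' is misleading: you lost the information by substituting limits too early. Keep the constraint instead, and set $z := Z_{\geq k}/N_n$. Maximize the convex function $x_1 \mapsto x_1^2 + x_2(z - x_1)$ over $x_1 \in [x_2, z-x_2]$, using $x_2 \leq z/2$. This gives $P_k \leq z^2 - z x_2$, i.e.\ $x_2 \leq (z^2 - P_k)/z$, a bound that does not need $x_1 \to \zeta$ at all.
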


\section{Exploration and the local-weak limit of MRSCs}
\label{sec: exploration and LWC}

In this section, we first describe the breadth-first exploration process of a $d$-dimensional connected component in a random $d$-dimensional simplicial complex and analyze the stochastic behavior of the exploration process. The general description of the process on $X_n \sim \MRSC_d(n; \bm{p})$ is similar as that introduced in \cite{Kanazwa:2022}. However, we investigate it much more meticulously so as to utilize it for explorations much beyond a local neighborhood, which is necessary for the proof of Theorem~\ref{thm: supercritical MRSC}. In Section~\ref{subsec: LWC of MRSC}, we prove that $X_n \sim \MRSC_d(n; \bm{p})$ under Assumption~\ref{assumption: prob parameters} converges locally weakly in probability to the so-called $(d-1)$-rooted Poisson tree with parameter $\lambda$. The main tool for the proof is the breadth-first exploration on local neighborhoods.

\subsection{Breadth-first exploration on a $d$-dimensional simplicial complex}
\label{subsec: breadth-first exploration}

Let $\Delta_n$ denote the complete $d$-dimensioanl simplicial complex on the vertex set $[n]$, the simplicial complex containing all possible simplices up to dimension $d$ upon the vertex set $[n]$. In particular, $\Delta_n$ contains all $\binom{n}{d}$ $(d-1)$-dimensional simplices and $\binom{n}{d+1}$ $d$-dimensional simplices on $[n]$. For each $0 \leq k \leq d$, let $S_k(\Delta_n)$ be equipped with the lexicographic ordering induced by the ordering of the vertices $1 < 2 < \cdots < n$.

We describe the breadth-first exploration on a simplicial complex $\Delta_n^{(d-1)} \subseteq X \subseteq \Delta_n$. Fix a $(d-1)$-dimensional simplex in $X$, and then explore the $(d-1)$-dimensional simplices adjacent to it one by one. The final output of the exploration is the $d$-dimensional connected component containing the fixed $(d-1)$-simplex at the beginning. 

Let $\sigma$ be a $(d-1)$-dimensional simplex in $X$, which we call the root $(d-1)$-simplex. At Step $0$, set
\[
\mathcal{X}_0 := Q(\sigma),
\]
and declare that $\sigma$ is active. At Step $k \geq 1$, we choose the active $(d-1)$-dimensional simplex, denoted by $\sigma_k$, which is closest to the root simplex $\sigma$ and is minimal with respect to the ordering on $S_{d-1}(\Delta_n)$. More formally, let $\mathcal{A}_{k-1}$ be the set of all active $(d-1)$-dimensional simplices at the end of Step $k-1$. Then $\sigma_k \in \mathcal{A}_{k-1}$ satisfies
\[
\mathrm{dist}_{\mathcal{X}_{k-1}} (\sigma, \sigma_k) = \min_{\sigma' \in \mathcal{A}_{k-1}} \mathrm{dist}_{\mathcal{X}_{k-1}} (\sigma, \sigma')
\]
and
\[
\sigma_k < \sigma' \; \text{ for every } \sigma' \in \mathcal{A}_{k-1} \setminus \{\sigma_k\},
\]
Next, detect all $(d-1)$-dimensional simplices in $X \setminus \mathcal{X}_{k-1}$ which are adjacent to $\sigma_k$ in $X \setminus \mathcal{X}_{k-1}$. The newly detected simplices are classified into three categories, which we refer to ``forward $(d-1)$-simplices'', ``backward $(d-1)$-simplices'', and ``sibling $(d-1)$-simplices'', in the following criteria: write $\sigma_k = \{v_{1,k}, \dots, v_{d,k}\}$, then 

\begin{itemize}

\item If a new $(d-1)$-simplex is a part of a $d$-simplex $\{v_{1,k}, \dots,  v_{d,k}, w \}$, where $w \not\in \mathcal{X}_{k-1}$, then the new $(d-1)$-simplex is called a {\it forward $(d-1)$-simplex}, and the $d$-simplex is called a {\it forward $d$-simplex}. We write $\mathcal{F}_{d-1, k}$ and $\mathcal{F}_{d,k}$ for the set of forward $(d-1)$-simplices and the set of forward $d$-simplices detected at Step $k$, respectively.

\item If a new $(d-1)$-simplex is a part of a $d$-simplex $\{v_{1,k}, \dots, v_{d,k}, w \}$, where $w \in \mathcal{X}_{k-1}$, but none of $(d-1)$-dimensional faces of the $d$-simplex containing $w$ is in $\mathcal{X}_{k-1}$, then the new $(d-1)$-simplex is called a {\it backward $(d-1)$-simplex}, and the $d$-simplex is called a {\it backward $d$-simplex}. We write $\mathcal{B}_{d-1,k}$ and $\mathcal{B}_{d,k}$ for the set of backward $(d-1)$-simplices and the set of backward $d$-simplices detected at Step $k$, respectively.

\item If a new $(d-1)$-simplex is a part of a $d$-simplex $\{v_{1,k}, \dots, v_{d,k}, w \}$, where $w \in \mathcal{X}_{k-1}$ and if at least one of the $(d-1)$-dimensional faces of the $d$-simplex containing $w$ is in $\mathcal{X}_{k-1}$, then the new $(d-1)$-simplex is called a {\it sibling $(d-1)$-simplex}, and the $d$-simplex is called a {\it sibling $d$-simplex}. We write $\mathcal{H}_{d-1,k}$ and $\mathcal{H}_{d,k}$ for the set of sibling $(d-1)$-simplices and the set of sibling $d$-simplices detected at Step $k$, respectively.

\end{itemize}
Note that every forward or backward $d$-simplex adds $d$ new $(d-1)$-simplices; whereas sibling $d$-simplices add less than that.

Finally, we declare all the new $(d-1)$-simplices are active and $\sigma_k$ becomes explored, and set 
\[
\mathcal{X}_k = \mathcal{X}_{k-1} \bigcup \lb \bigcup_{\tau \in \mathcal{F}_{d,k} \cup \mathcal{B}_{d,k} \cup \mathcal{H}_{d,k}} Q(\tau) \rb.
\]
The exploration process terminates when there is no longer an active $d$-simplex, i.e., $\mathcal{A}_k = \emptyset$.

\subsection{Properties of the breadth-first exploration on MRSC}
\label{subsec: breadth-first exploration-stochastic properties}

Next, we examine some probabilistic properties of this breadth-first exploration process on $X_n \sim \MRSC_d(n, \bm{p})$, which explores the $d$-dimensional connected component of each $(d-1)$-dimensional simplex $\sigma$ in $X_n$. To do so, we define the following general version of the degree of simplices in a simplicial complex.

\begin{defi}[Degree of simplices]
\label{def: degree}
Let $X$ be a $d$-dimensional simplicial complex and $\sigma$ be a $k$-dimensional simplex in $X$, $0 \leq k < d$. For $k \leq \ell \leq d$, the $\ell$-degree of $\sigma$ is defined to be the number of $\ell$-dimensional simplices in $X$ that contain $\sigma$, i.e., 
\[ \label{eq: degree 3.2.1} \tag{3.2.1}
\deg_{k, \ell}^X(\sigma) := \left| \{ \tau \in X \: : \: \dim (\tau) = \ell, \sigma \subseteq \tau \} \right|.
\]
In other words, $\deg_{k, \ell}$ is equal to the cardinality of the $(\ell - k)$-dimensional link of $\sigma$ in $X$.
\end{defi}

Note that $\deg^X_{0, 1}(v)$ for each $v \in S_0(X)$ is defined to be the vertex-edge degree of the vertex $v$ in the $1$-dimensional skeleton (graph) of $X$. Let $\sigma$ be an arbitrary $(d-1)$-dimensional simplex in $X_n$ at which we start our exploration, and let $E_k$ be the number of new active $(d-1)$-dimensional simplices obtained at Step $k$. Let us write $F_k = |\mathcal{F}_{d-1, k}|$, the number of forward $(d-1)$-dimensional simplices; $H_k := |\mathcal{H}_{d-1, k}|$, the number of sibling $(d-1)$-dimensional simplices; and $B_k := |\mathcal{B}_{d-1,k}|$, the number of backward $(d-1)$-dimensional simplices newly obtained at Step $k$. Note that $E_k = F_k + H_k + B_k$. Additionally, let us define $\tilde{E}_k := F_k + B_k$, the number of new forward and backward $(d-1)$-simplices. Clearly, $E_k \geq \tilde{E}_k$. We approximate the probability distributions of $F_k$ and $B_k$, which contribute primarily to adding new $(d-1)$-dimensional simplices to $\mathcal{C}(\sigma)$. For each $k \geq 0$, let $V_k := s_0(\mathcal{X}_k)$ be the number of vertices in $\mathcal{X}_k$.

We introduce additional parameter $p_{d, k,\ell}$ for $0 \leq \ell \leq d-1$ defined as follows.  Let $\tau, \sigma, \rho$ be arbitrary~$d$, $(d-1)$, and $\ell$-dimensional simplices on $[n]$, respectively, satisfying $\sigma, \rho \subset \tau$ and $\rho  \not\subseteq \sigma$. Then, we define
\[ \label{eq: 3.2.2} \tag{3.2.2}
p_{d, \ell} := \Pr \lp \tau \in X_n \: | \: \sigma, \rho \in X_n \rp.
\]
Note that $p_{d, 0} = r_d = \lambda/n$. The following lemma provides estimates for the probability of creating new forward or backward $d$-dimensional simplices.

\begin{lemma}
\label{lemma: forward estimate}
Let $k \geq 0$ and let $v$ be a vertex that can create a forward $d$-dimensional simplex with a $(d-1)$-simplex $\sigma_k$ that is being explored at Step $k$. Then, given $\mathcal{X}_{k-1}$, the probability that $\sigma_k \cup \{v\}$ appears at Step $k$ is lower bounded by
\[ \label{eq: forward prob 3.2.3} \tag{3.2.3}
p_k^F := \lp 1 - (k-1) \frac{\lambda}{n} - \sum_{\ell = 1}^{d-1} \sum_{\rho \subseteq \sigma_k, \dim(\rho) = \ell-1} \deg_{\ell-1, d-1}^{\mathcal{X}_{k-1}} (\rho)  p_{d, \ell}  \rp \lp \frac{\lambda}{n} + o(1) \rp.
\]
In particular, if the given MRSC model is the Linial-Meshulam model, the probability is exactly $\lambda/n + o(1)$.
\end{lemma}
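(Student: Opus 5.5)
Since $v$ is a fresh vertex, the simplex $\tau := \sigma_k \cup \{v\}$ has not yet been revealed, and the proof comes down to two conditions. First, $\tau$ must be \emph{available}: it has not been examined, and thereby excluded, at an earlier step. Second, given availability, $\tau$ must actually lie in $X_n$. I would write
\[
\Pr\bigl(\tau \in X_n \text{ found at Step } k \mid \mathcal{X}_{k-1}\bigr) \ge \Pr\bigl(\text{no prior conflicting revelation} \mid \mathcal{X}_{k-1}\bigr)\cdot \Pr\bigl(\tau\in X_n \mid \sigma_k\in X_n, \mathcal{X}_{k-1}\bigr),
\]
and bound each factor separately. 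The second factor gives the $\lambda/n+o(1)$ term.

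\textbf{Second factor.} Since $v \notin \mathcal{X}_{k-1}$, none of the faces of $\tau$ that contain $v$ has been revealed. By the recursive independent construction of Definition~\ref{defn:MRSC}, the event $\tau \in X_n$ requires every face of $\tau$ containing $v$ to be present, and then $\tau$ itself, with probability $p_d$. Conditionally on $\sigma_k \in X_n$, these faces number $\binom{d}{k}$ of dimension $k$ for each $k$. By independence from the revealed history (the history only concerns simplices not containing $v$), this conditional probability is exactly $r_d = \lambda/n$ by \eqref{eq: 1.2.2 r_k}. The $o(1)$ absorbs the effect of conditioning on negative information, namely $d$-simplices through $\sigma_k$ that were checked and found absent at earlier steps. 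This effect is handled by a Bayes/FKG-type argument, since absent information on other $d$-simplices is independent of the faces containing $v$.

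\textbf{First factor.} I would apply a union bound to the ways the faces of $\tau$ can already have been touched. These fall into two groups.

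\emph{Explored $(d-1)$-simplices.} Each of the previously explored simplices $\sigma_1,\dots,\sigma_{k-1}$ could have generated a $d$-simplex sharing the $(d-1)$-face structure relevant to $\tau$. Each such event has probability at most $r_d=\lambda/n$, which gives the term $(k-1)\lambda/n$.

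\emph{Lower-dimensional faces.} For each $\ell$-dimensional face $\rho$ of $\tau$ with $\rho\not\subseteq \sigma_k$, equivalently $\rho = \rho'\cup\{v\}$ with $\rho'\subseteq\sigma_k$ of dimension $\ell-1$, I would count how it could have been revealed (creating a dependence). This happens through a $(d-1)$-simplex of $\mathcal{X}_{k-1}$ containing $\rho'$, and there are $\deg^{\mathcal{X}_{k-1}}_{\ell-1,d-1}(\rho')$ of these. Each such simplex spans a $d$-simplex with $\rho$ conditionally with probability $p_{d,\ell}$ from \eqref{eq: 3.2.2}. Summing over $\ell$ and over $\rho'$ gives the double sum in \eqref{eq: forward prob 3.2.3}.

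Taking the complement then gives the first factor of $p_k^F$.

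\textbf{Linial--Meshulam case.} All lower skeleta are complete, so $p_{d,\ell}$ plays no role; indeed there is no face dependence, because $p_k=1$ for $k<d$. Each $d$-simplex is independent with probability $\lambda/n$, and $\tau$ is new because $v$ is new, so the probability is exactly $\lambda/n$.

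\textbf{Main obstacle.} I expect the hard step to be making the conditioning on $\mathcal{X}_{k-1}$ rigorous. That $\sigma$-field contains both positive and negative information about simplices sharing lower faces with $\tau$. The task is to show that the negative information only decreases the probability through the explicit union-bound terms, plus an $o(1)$ correction. I would first try to expose the revealed set explicitly and use independence of disjoint face sets in the MRSC construction.
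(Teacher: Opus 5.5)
There is a genuine gap: your factorization is circular, and the claim you use to evaluate it is false. Conditionally on $\mathcal X_{k-1}$ with $v\notin S_0(\mathcal X_{k-1})$, the event $E:=\bigcap_{j<k}\{\sigma_j\cup\{v\}\notin X_n\}$ (no earlier explored simplex produced $v$) has already occurred. So your first factor is identically $1$, while your second factor $\Pr(\tau\in X_n\mid\sigma_k\in X_n,\mathcal X_{k-1})$ is exactly the probability the lemma is about.

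Everything therefore rests on your claim that this factor is $\lambda/n+o(1)$, because the history ``only concerns simplices not containing $v$''. That is false. The history contains the negative information $\sigma_j\cup\{v\}\notin X_n$, which concerns $d$-simplices through $v$, not through $\sigma_k$. Moreover, $\sigma_j\cup\{v\}$ and $\tau$ share the face $(\sigma_j\cap\sigma_k)\cup\{v\}$, so these events are correlated with $\{\tau\in X_n\}$. This correlation is precisely the source of the $\deg^{\mathcal X_{k-1}}_{\ell-1,d-1}(\rho)\,p_{d,\ell}$ terms. They are not $o(1)$ in general; controlling them is why the paper later needs Lemma~\ref{lemma: upper bounds for degrees d =2}. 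An FKG/Harris argument only gives the opposite inequality $\Pr(\tau\in X_n\mid\mathcal X_{k-1})\le r_d$.

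The missing step is a Bayes reversal. It removes the negative information from the conditioning of $\tau$ and moves $\tau$ into the conditioning of the union bound. Write $\{\mathcal X_{k-1}=T\}=\{T\subseteq X_n\}\cap E\cap F$, where $F$ records the non-discovery of the other vertices and is independent of $E$ and of $\{\tau\in X_n\}$ given $T\subseteq X_n$. Then
\[
\Pr(\tau\in X_n\mid\mathcal X_{k-1}=T)=\frac{\Pr(\tau\in X_n,\,E\mid T\subseteq X_n)}{\Pr(E\mid T\subseteq X_n)}\ge\Pr(E\mid\tau\in X_n,\,T\subseteq X_n)\,\Pr(\tau\in X_n\mid T\subseteq X_n).
\]
The last factor involves only positive information and equals $r_d$ exactly, with no correction needed. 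The union bound is then applied to $E^c$ \emph{given $\tau\in X_n$}. This conditioning is what makes $\Pr(\sigma_j\cup\{v\}\in X_n\mid\tau\in X_n,T\subseteq X_n)=p_{d,\ell}$ with $\ell=|\sigma_j\cap\sigma_k|$, and $p_{d,0}=r_d$ for disjoint $\sigma_j$ gives the $(k-1)\lambda/n$ term.

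Your two ``groups'' are just the cases $\ell=0$ and $\ell\geq 1$ of this single union bound over the explored $\sigma_j$. Your use of $p_{d,\ell}$ already tacitly conditions on $\tau\in X_n$, so with the conditioning placed this way your counting goes through. Your Linial--Meshulam argument is fine as it stands.
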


\begin{proof}
Let $k \geq 0$ and let $T \subseteq \Delta_n$ be a $d$-dimensional simplicial complex with $\Pr (\mathcal{X}_{k-1} = T) > 0$. Note that under $\mathcal{X}_{k-1} = T$, the simplicial complexes $(\mathcal{X}_{j = 0}^{k-1}$ that have appeared throughout the exploration is equal to the simplicial complexes obtained by the breadth-first exploration on $T$ and $v \not\in S_0(T)$. The probability we consider can be written as
\[ \label{eq: 3.2.4} \tag{3.2.4}
\Pr \lp \sigma_k \cup \{v\} \in X_n \: | \: \mathcal{X}_{k-1} = T \rp.
\]
Let $E$ be the event defined as follows:
\[ \label{eq: 3.2.5} \tag{3.2.5}
E := \bigcap_{1 \leq j \leq k-1} \left\{ \sigma_j \cup \{v\} \not\in X_n \right\},
\]
Additionally, let $\sigma_1, \dots, \sigma_{k-1}$ be the explored $(d-1)$-simplices until Step $k-1$. For each $u \in [n]$, let the event $F_u$ be defined by
\[ \label{eq: 3.2.6} \tag{3.2.6}
F_u := \bigcap_{\{ j \: : \: 1 \leq j \leq k-1, u \not\in S_0(\mathcal{X}_{j-1}) \}} \left\{ \sigma_j \cup \{u \} \not\in X_n \setminus T \right\},
\]
and set
\[ \label{eq: 3.2.7} \tag{3.2.7}
F := \bigcap_{u \in [n] \setminus \{v\}} F_u.
\]
Here, $E$ is the event that any previously explored $(d-1)$-simplex did not create a $d$-simplex with the vertex $v$. $F$ is the event that all the other vertices did not show up before the step they were supposed to show up. Then, we can write
\[ \label{eq: 3.2.8} \tag{3.2.8}
\{\mathcal{X}_{k-1} = T \} = \{T \subseteq X_n \} \cap E \cap F.
\]

Following the argument of \cite[Lemma 16]{Kanazwa:2022}, we can write
\[ \label{eq: 3.2.9} \tag{3.2.9}
\begin{aligned}
\Pr \lp \sigma_k \cup \{v\} \in X_n \: | \: \mathcal{X}^{k-1} = T \rp & = \frac{\Pr \lp  \{ \sigma_k \cup \{v\} \in X_n \} \cap \{\mathcal{X}_{k-1} = T \} \rp}{\Pr \lp \mathcal{X}_{k-1} = T \rp} \\
& = \frac{\Pr \lp \{\sigma_k \cup \{v\} \in X_n\} \cap E \cap F \: | \: | T \subseteq X_n \rp }{\Pr \lp E \cap F \: | \: T \subseteq X_n \rp}.
\end{aligned}
\]
Because the event $F$ is independent of both $E$ and $\{\sigma_k \cup \{v\} \in X_n\}$ given $T \subseteq X_n$, the last expression of (\ref{eq: 3.2.9}) can be written as
\[ \label{eq: 3.2.10} \tag{3.2.10}
\frac{\Pr \lp \{\sigma_k \cup \{v\} \in X_n \} \cap E \: | \: T \subseteq X_n \rp \Pr \lp F \: | \: T \subseteq X_n \rp}{\Pr \lp E \: | \: T \subseteq X_n \rp \Pr \lp F \: | \: T \subseteq X_n \rp} = \frac{\Pr \lp \{\sigma_k \cup \{v\} \in X_n \} \cap E \: | \: T \subseteq X_n \rp}{\Pr \lp E \: | \: T \subseteq X_n \rp }.
\]
The right-hand side of (\ref{eq: 3.2.10}) is lower bounded by
\[ \label{eq: 3.2.11} \tag{3.2.11}
\Pr \lp \{ \sigma_k \cup \{v\} \in X_n \} \cap E \: | \: T\subseteq X_n \rp,
\]
which is equal to
\[ \label{eq: 3.2.12} \tag{3.2.12}
\Pr \lp E \: | \: \sigma_k \cup \{v\} \in X_n, T \subseteq X_n \rp \Pr \lp \sigma^k \cup \{v\} \in X_n \: | \: T \subseteq X_n \rp.
\]
By the assumption (\ref{eq: 1.3.2 assumption 2}), the latter term of (\ref{eq: 3.2.12}) equals $\lambda/n+ o(1)$. The former term can be estimated as follows: 
\[
\Pr \lp E \: | \: \sigma^k \cup \{v\} \in X_n, T \subseteq X_n \rp = 1 - \Pr \lp \bigcup_{1 \leq j \leq k-1} \left\{ \sigma_j \cup \{v \} \in X_n \right\} \: | \: \sigma_k \cup \{v\} \in X_n, T \subseteq X_n \rp \\
\]
Using the union bound, we can show that the right-hand side of the above is lower bounded by
\[ \label{eq: 3.2.13} \tag{3.2.13}
1 - \sum_{1 \leq j \leq k-1} \Pr \lp \sigma_j \cup \{v\} \in X_n \: | \: \sigma_k \cup \{v\} \in X_n, T \subseteq X_n \rp.
\]
Note that each probability term in the summation in (\ref{eq: 3.2.13}) is equal to
\[
\Pr \lp \sigma_j \cup \{v\} \in X_n \: | \: \sigma_j \in X_n, \{v\} \cup (\sigma_k \cap \sigma_j) \in X_n \rp.
\]
This probability depends on the intersection of the two $d$-simplices $\sigma_j \cup \{v\}$ and $\sigma_k \cup \{v\}$. For each $0 \leq \ell \leq d-1$, if 
\[
\dim \lp (\sigma_j \cup \{v\}) \cap (\sigma_k \cup \{v\}) \rp =  \left| \lp \sigma_j \cup \{v\} \rp \cap \lp \sigma_k \cup \{v\} \rp \right| - 1 = | \sigma_j \cap \sigma_k| = \ell,
\]
we have
\[
\Pr \lp \sigma_j \cup \{v\} \in X_n \: | \: \sigma_j \in X_n, \{v\} \cup (\sigma_k \cap \sigma_j) \in X_n \rp = p_{d, \ell},
\]
where $p_{d, \ell-1}$ is defined in (\ref{eq: 3.2.2}). In particular, if $\sigma_j$ and $\sigma_k$ are disjoint ($\ell = 0$), then the probability is equal to $\lambda/n$; whereas, if $\sigma_j$ and $\sigma_k$ share a $(d-2)$-dimensional face ($\ell =d-1$), then the probability is equal to $p_{d, d-1}$. Thus, under the event $\{\sigma_k \cup \{v\} \in X_n, T \subseteq X_n \}$, the right-hand side of (\ref{eq: 3.2.13}) is equal to
\[ \label{exp: 3.2.14} \tag{3.2.14}
1 - \sum_{0 \leq \ell \leq d-1} \sum_{1 \leq j \leq k-1} \1 \{ |\sigma_j \cap \sigma_k| = \ell  \} p_{d, \ell}.
\]
For $\ell =0$, we use the trivial upper bound $\sum_{1 \leq j \leq k-1} \1\{ |\sigma_j \cap \sigma_k| = 0 \} \leq k-1$. For $1 \leq \ell \leq d-1$ , we have
\[ \label{eq: 3.2.15} \tag{3.2.15}
\sum_{1 \leq j \leq k-1} \1\{ |\sigma_j \cap \sigma_k| = \ell \} \leq  \sum_{\substack{\rho \subseteq \sigma_k, \\ \dim (\rho) = \ell -1}} \deg_{\ell-1, d-1}^{T}(\rho).
\]
Then, the expression in (\ref{eq: 3.2.12}) is lower bounded by
\[ \label{eq: 3.2.16} \tag{3.2.16}
1 - (k-1) \frac{\lambda}{n} - \sum_{1 \leq \ell \leq d-1} \sum_{\substack{\rho \subseteq \sigma_k, \\ \dim(\rho) = \ell-1}} \deg_{\ell-1, d-1}^T (\rho) p_{\ell, d}.
\]
This proves the general statement of the lemma.

When the given model is the Linial-Meshulam model, the event $E$ is independent of the event $\sigma^k \cup \{v\} \in X_n \}$. Thus, the right-hand side of (\ref{eq: 3.2.10}) becomes
\[ \label{eq: 3.2.17} \tag{3.2.17}
\begin{aligned}
\frac{\Pr \lp \{\sigma_k \cup \{v\} \in X_n \} \cap E \: | \: T \subseteq X_n \rp}{\Pr \lp E \: | \: T \subseteq X_n \rp} & = \frac{\Pr \lp \{\sigma_k \cup \{v\} \in X_n \} \: | \: T \subseteq X_n \rp \Pr \lp E \: | \: T \subseteq X_n \rp}{\Pr \lp E \: | \: T \subseteq X_n \rp} \\
& = \Pr \lp \sigma_k \cup \{v\} \in X_n \: | \: T \subseteq X_n \rp \\
& = \frac{\lambda}{n}.
\end{aligned}
\]
This completes the proof of the second assertion of the lemma.
\end{proof}

The probability distribution of the backward exploration can be estimated as follows:

\begin{lemma}
\label{lemma: backward estimate}
Let $k \geq 0$ and $v$ be a vertex that can create a backward $d$-dimensional simplex a $(d-1)$-simplex $\sigma^k$ that is being explored at Step $k$. Then, the probability that $\sigma^k \cup \{v\}$ appears at Step $k$ is lower bounded by
\[ \label{eq: 3.2.18} \tag{3.2.18}
p_k^B := \bigg( 1- \sum_{\ell = 1}^{d-1} \sum_{\substack{\rho \subseteq \sigma^k, \\ \dim(\rho) = \ell-1}} \deg_{\ell-1, d-1}^{\mathcal{X}_{k-1}}(\rho) p_{d, \ell} \bigg) \lp \frac{\lambda}{n} + o(1) \rp.
\]
In particular, if the given MRSC model is the Linial-Meshulam model, the probability is exactly $\lambda/n + o(1)$.
\end{lemma}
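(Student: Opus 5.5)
The plan is to follow the proof of Lemma~\ref{lemma: forward estimate} essentially line by line; the one new point is that the vertex $v$ now already lies in $\mathcal{X}_{k-1}$. Fix a $d$-dimensional complex $T$ with $\Pr(\mathcal{X}_{k-1} = T) > 0$. Since $v \in S_0(T)$ and $\sigma^k\cup\{v\}$ is a candidate backward $d$-simplex, none of the $(d-1)$-faces of $\sigma^k \cup \{v\}$ containing $v$ lies in $T$. We again decompose $\{\mathcal{X}_{k-1} = T\} = \{T \subseteq X_n\} \cap E \cap F$. The event $F$ is defined as in (\ref{eq: 3.2.6})--(\ref{eq: 3.2.7}) and records that no vertex appeared before its scheduled step. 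The event $E$ now records that no previously explored $\sigma_j$ created with $v$ a $d$-simplex other than those already in $T$. The key observation is the following. Because $v$ was already discovered, every $d$-simplex $\sigma_j\cup\{v\}$ with $v \in S_0(\mathcal{X}_{j-1})$ that is absent from $T$ would have been detected as a backward or sibling simplex at step $j$. So $E$ consists only of non-appearance constraints of the form $\{\sigma_j \cup\{v\}\notin X_n\}$, and $F$ is conditionally independent of $E$ and of $\{\sigma^k\cup\{v\}\in X_n\}$ given $T\subseteq X_n$, exactly as before.

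With this in hand, the computation (\ref{eq: 3.2.9})--(\ref{eq: 3.2.12}) goes through verbatim. The target probability equals $\Pr(\{\sigma^k\cup\{v\}\in X_n\}\cap E \mid T\subseteq X_n)/\Pr(E\mid T\subseteq X_n)$, which is at least $\Pr(E \mid \sigma^k\cup\{v\}\in X_n, T\subseteq X_n)\,\Pr(\sigma^k\cup\{v\}\in X_n\mid T\subseteq X_n)$. For the second factor, note that the $(d-1)$-faces of $\sigma^k\cup\{v\}$ through $v$ are not in $T$, so conditioning on $T$ gives no information about them beyond the lower skeleton. Hence the factor is $r_d(1+o(1))$ up to the skeleton probabilities already built into the model, which gives $\lambda/n+o(1)$ by (\ref{eq: 1.3.2 assumption 2}).

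For the first factor we apply the union bound as in (\ref{eq: 3.2.13}). The difference from the forward case is that the $\ell=0$ term disappears. If $|\sigma_j\cap\sigma^k|=0$, then $\sigma_j\cup\{v\}$ and $\sigma^k\cup\{v\}$ share only the vertex $v$. The constraint $\sigma_j\cup\{v\}\notin X_n$ with $v\in\mathcal{X}_{j-1}$ then concerns a $d$-simplex, so I intend to argue either that it is conditionally independent of $\sigma^k\cup\{v\}\in X_n$ or that it is already absorbed into the conditioning on $T$; in either case it contributes no loss. More precisely, I would restrict $E$ to those $j$ with $\sigma_j \cap \sigma^k\neq\emptyset$, since the remaining factors cancel between numerator and denominator in (\ref{eq: 3.2.10}) by independence. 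This is exactly why the factor $(k-1)\lambda/n$ is absent from (\ref{eq: 3.2.18}). For $1\le \ell\le d-1$, the conditional probability is $p_{d,\ell}$, and the count of such $j$ is bounded by $\sum_{\rho\subseteq\sigma^k,\dim\rho=\ell-1}\deg^{\mathcal{X}_{k-1}}_{\ell-1,d-1}(\rho)$ as in (\ref{eq: 3.2.15}). This yields (\ref{eq: 3.2.18}).

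The step I expect to be the main obstacle is the independence and cancellation argument that removes the $\ell=0$ terms. Whether the events attached to $\sigma_j$ disjoint from $\sigma^k$ really factor out depends on the lower skeleton: the two $d$-simplices share only the $0$-simplex $v$, which is deterministically present, so their inclusion events should be conditionally independent given the skeleton. I would verify this carefully using the recursive construction in Definition~\ref{defn:MRSC}. For the Linial--Meshulam case, all $d$-simplices are independent given the complete $(d-1)$-skeleton. So $E$ is independent of $\{\sigma^k\cup\{v\}\in X_n\}$, and the same computation as (\ref{eq: 3.2.17}) gives exactly $\lambda/n+o(1)$.
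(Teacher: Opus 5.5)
Your skeleton is the paper's. You condition on $\{T\subseteq X_n\}$ plus non-appearance constraints, discard a denominator $\le 1$, and factor out $\Pr(A\mid T\subseteq X_n)$ with $A:=\{\sigma^k\cup\{v\}\in X_n\}$. You then union-bound only over explored $\sigma_j$ meeting $\sigma^k$, count them via degrees of faces of $\sigma^k$, and treat Linial--Meshulam by independence; that last part is fine. The paper drops the disjoint $\sigma_j$ by fiat, defining $E''$ only over $j$ with $\sigma_j\cap\sigma^k\neq\emptyset$. Trying to justify this is the right instinct, but two steps fail as written.

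\textbf{The $F$-independence claim is false in the backward case.} You assert that $F$ is conditionally independent of $A$ ``exactly as before'', and this is precisely where the backward case differs from the forward one.
\begin{itemize}
\item In the forward case $v\notin S_0(T)$. So every face shared by $\sigma_j\cup\{u\}$ ($u\neq v$) and $\sigma_{j'}\cup\{v\}$ is a face of $\sigma_{j'}\in T$, whose coin is already fixed.
\item In the backward case $v$ can be a vertex of explored simplices. For $d=2$, take $\sigma_j=\{v,a\}$ explored and $\sigma^k=\{w,b\}$. The forbidden triangle $\{v,a,w\}$ and the target $\{v,w,b\}$ share the edge $\{v,w\}\notin T$, so this constraint is negatively correlated with $A$. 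If $w$ was already discovered at step $j$, the constraint is not even contained in $E\cap F$, so the decomposition itself is incomplete.
\end{itemize}
Such constraints must enter the union bound. They add
\[
\sum_{w\in\sigma^k}\ \sum_{j:\,v\in\sigma_j,\ w\notin\sigma_j}\Pr\bigl(\sigma_j\cup\{w\}\in X_n\mid A,\,T\subseteq X_n\bigr),
\]
a term governed by $\deg^{\mathcal X_{k-1}}_{0,d-1}(v)$ that is absent from (3.2.18). It is not negligible in general. Suppose $p_1\to 0$, the vertices of $\sigma^k$ have bounded degree in $T$, and $\deg^{T}_{0,1}(v)\,p_1p_2$ is of order one. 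Then the conditional probability is smaller than $\lambda/n$ by a constant factor, while the degree sum in (3.2.18) is $o(1)$.

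The paper's proof misses this too, because it silently replaces $\{\mathcal X_{k-1}=T\}$ by $\{T\subseteq X_n\}\cap E'\cap E''$. In Section 4 the extra term is $o(1)$ thanks to the uniform degree bound of Lemma~\ref{lemma: upper bounds for degrees d =2}. So the repair is to add this term to $p_k^B$.

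\textbf{The disjoint-$\sigma_j$ factors do not cancel by independence.} Each such event is independent of $A$, since the two simplices share only $v$. It is not independent of the constraints with $\sigma_{j'}\cap\sigma^k\neq\emptyset$: if $\sigma_j\cap\sigma_{j'}\neq\emptyset$, the two forbidden simplices share faces through $v$. So the ratio in (3.2.10) does not factor.

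Both issues are handled at once by the coin representation of the MRSC: one independent coin per simplex, with a simplex present iff all its faces' coins equal $1$.
\begin{itemize}
\item Call a $d$-simplex forbidden if it is of the form $\sigma_j\cup\{u\}\notin T$, $j\le k-1$, and $\{\mathcal X_{k-1}=T\}$ forces its absence.
\item Let $\mathcal N$ be the forbidden simplices sharing with $\sigma^k\cup\{v\}$ a face not in $T$, and let $G$ be the event that all other forbidden simplices are absent.
\item Then $\Pr(A\mid G,\,T\subseteq X_n)=\Pr(A\mid T\subseteq X_n)$, because the relevant coin sets are disjoint.
\item Drop only the denominator $\Pr(\text{no }\tau\in\mathcal N\text{ present}\mid G,\,T\subseteq X_n)\le 1$.
\item Apply Harris's inequality under the product measure given $A$ and $T\subseteq X_n$, where $\{\tau\in X_n\}$ is increasing and $G$ is decreasing.
\end{itemize}
This yields
\[
\Pr\bigl(A\mid\mathcal X_{k-1}=T\bigr)\ \ge\ \Pr\bigl(A\mid T\subseteq X_n\bigr)\Bigl(1-\sum_{\tau\in\mathcal N}\Pr\bigl(\tau\in X_n\mid A,\,T\subseteq X_n\bigr)\Bigr),
\]
where $\mathcal N$ is contained in your $\ell\ge 1$ family together with the family above.

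\textbf{A further caveat, shared with the paper.} $\Pr(\tau\in X_n\mid A,\,T\subseteq X_n)$ exceeds $p_{d,\ell}$ when $T$ already contains other faces of $\tau$ through $v$. For $d=2$ with $\{v,y\}\in T$, the triangle $\{v,x,y\}$ has conditional probability $p_2$, not $p_1p_2$. So ``equals $p_{d,\ell}$'' is not a valid upper bound in that case.
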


\begin{proof}
Let $k \geq 0$ and let $T \subseteq \Delta_n$ be a $d$-dimensional simplicial complex $\Pr(\mathcal{X}^{k-1} = T) > 0$. The vertex $v$ satisfies $v \in S_0(T)$ but no other faces of $\sigma_k \cup \{v\}$ containing $v$ belongs to $T$. The probability we consider is the following:
\[ \label{eq: 3.2.19} \tag{3.2.19}
\Pr \lp \sigma^k \cup \{v\} \in X_n \: | \: \mathcal{X}_{k-1} = T \rp.
\]
Let us define the following two events $E'$ and $E''$:
\[ \label{eq: 3.2.20} \tag{3.2.20}
E' := \{ v \in \sigma_1 \} \cup \lp \bigcup_{1 \leq j \leq k-1} \{ \sigma_j \cup \{v\} \in X_n \} \rp
\]
and
\[ \label{eq: 3.2.21} \tag{3.2.21}
E'' := \bigcap_{0 \leq \ell \leq d-2} \: \: \bigcap_{\substack{1 \leq j \leq k-1, \\ \dim( \sigma_j \cap \sigma_k) = \ell}} \{\sigma_j \cup \{v\} \not\in X_n \}.
\]
In plain parlance, the event $E'$ is the event that the vertex $v$ belongs to the explored part $\mathcal{X}_{k-1}$ up to Step $(k-1)$ and the event $E''$ is the event that none of lower-dimensional-faces of $\sigma_k \cup \{v\}$ containing $v$ has appeared up to Step $(k-1)$. Similarly as we derived (\ref{eq: 3.2.11}), we can obtain
\[ \label{eq: 3.2.22} \tag{3.2.22}
\begin{aligned}
\Pr \lp \sigma^k \cup \{v\} \in X_n \: | \: \mathcal{X}^{k-1} = T \rp & = \frac{\Pr \lp \{\sigma^k \cup \{v\} \in X_n \} \cap E' \cap E'' \: | \: T \subseteq X_n \rp }{\Pr \lp E' \cap E'' | T \subseteq X_n \rp} \\
& \geq \Pr \lp \{\sigma^k \cup \{v\} \in X_n \} \cap E' \cap E'' | T \subseteq X_n \rp.
\end{aligned}
\]
Because $E'$ happens with probability $1$ conditional on $T \subseteq X_n$, the last expression in (\ref{eq: 3.2.22}) is equal to
\[ \label{eq: 3.2.23} \tag{3.2.23}
\Pr \lp \{\sigma^k \cup \{v\} \in X_n \} \cap E'' \: | \: T \subseteq X_n \rp,
\]
which is further equal to
\[ \label{eq: 3.2.24} \tag{3.2.24}
\Pr \lp E'' \: | \: \sigma^k \cup \{v\} \in X_n,  T \subseteq X_n \rp \Pr \lp \sigma^k \cup \{v\} \in X_n \: | \: T \subseteq X_n \rp 
\]
Recall that the latter term of (\ref{eq: 3.2.24}) equals $\lambda/n + o(1)$ by the assumption (\ref{eq: 1.3.2 assumption 2}). Using the union bound, the former term can be estimated as follows:
\[ \label{eq: 3.2.25} \tag{3.2.25}
\begin{aligned}
\Pr \lp E'' \: | \: \sigma^k \cup \{v\} \in X_n,  T \subseteq X_n \rp & = 1 - \Pr \lp (E'')^c \: | \: \sigma_k \cup \{v\} \in X_n, T \subseteq X_n \rp \\
& \geq 1 - \sum_{\ell = 0}^{d-2} \sum_{\substack{1 \leq j \leq k-1, \\ \dim(\sigma_j \cap \sigma_k) = \ell}} \Pr \lp \sigma_j \cup \{v\} \in X_n \: | \: \sigma_k \cup \{v\} \in X_n, T \subseteq X_n \rp \\
& = 1 - \sum_{\ell = 1}^{d-1} \sum_{\substack{1 \leq j \leq k-1, \\ \dim(\sigma_j \cap \sigma_k) = \ell-1}} \Pr \lp \sigma_j \cup \{v\} \in X_n \: | \: \sigma_k \cup \{v\} \in X_n, T \subseteq X_n \rp.
\end{aligned}
\]
Similarly as we derived (\ref{eq: 3.2.16}) from (\ref{eq: 3.2.13}), we can obtain that the last expression of (\ref{eq: 3.2.25}) is lower bounded by
\[ \label{eq: 3.2.26} \tag{3.2.26}
1 - \sum_{\ell = 1}^{d-1} \sum_{\substack{\rho \subseteq \sigma_k, \\ \dim(\rho) = \ell - 1}} \deg_{\ell -1, d-1}^T(\rho) p_{d, \ell}.
\]
This completes the proof of the general statement of the lemma.

When the given model is the Linial-Meshulam model, the event $E''$ is independent of the event $\{\sigma^k \cup \{v\} \in X_n\}$, which proves the result.
\end{proof}

The key takeaway from Lemma \ref{lemma: forward estimate} and \ref{lemma: backward estimate} is the following:

\begin{cor}
\label{cor: probability of exploration processes}
Let $X_n \sim \MRSC_d(n; \bm{p})$ with the assumptions (\ref{eq: 1.3.1 assumption 1}) and (\ref{eq: 1.3.2 assumption 2}). Then, conditional on $\mathcal{X}_{k-1}$, the number of new forward $(d-1)$-simplices at each step $k$, denoted by $F_k$, satisfies
\[ \label{eq: 3.2.27} \tag{3.2.27}
F_k \geqst d \mathsf{Bin} \lp n - s_0(\mathcal{X}_{k-1}), p_k^F \rp,
\]
and the number of new backward $(d-1)$-simplices at each step $k$, denoted by $B_k$, satisfies
\[ \label{eq: 3.2.28} \tag{3.2.28}
B_k \geqst d \mathsf{Bin} \lp s_0(\mathcal{X}_{k-1}) - C \sum_{\ell = 1}^{d-1} \max_{\rho \in S_{\ell -1}(\mathcal{X}_{k-1}} \deg_{\ell -1, d-1}^{\mathcal{X}_{k-1}}(\rho), p_k^B \rp,
\]
where the constant $C > 0$ only depends on $d$. Furthermore, conditional on $\mathcal{X}_{k-1}$, $F_k$ and $B_k$ are independent, and it holds that
\[ \label{eq: 3.2.29} \tag{3.2.29}
F_k + B_k \geqst d  \mathsf{Bin} \lp n - C \sum_{\ell = 1}^{d-1} \max_{\rho \in S_{\ell - 1}(\mathcal{X}_{k-1})} \deg_{\ell - 1, d-1}^{\mathcal{X}_{k-1}}(\rho), \min\{p_k^F, p_k^B\} \rp.
\]
\end{cor}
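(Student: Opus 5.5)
The plan is to derive Corollary~\ref{cor: probability of exploration processes} by counting, for each step $k$, the candidate vertices $v$ that can complete $\sigma_k$ to a forward or backward $d$-simplex. We then apply Lemmas~\ref{lemma: forward estimate} and~\ref{lemma: backward estimate} to each candidate and couple the resulting indicators with independent Bernoulli variables.

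Condition on $\mathcal{X}_{k-1}=T$.

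\emph{Forward bound.} The candidates for forward $d$-simplices are the $n-s_0(T)$ vertices $v\notin S_0(T)$. For each such $v$, Lemma~\ref{lemma: forward estimate} gives $\Pr(\sigma_k\cup\{v\}\in X_n\mid \mathcal{X}_{k-1}=T)\ge p_k^F$. Each forward $d$-simplex contributes exactly $d$ new $(d-1)$-simplices, namely the faces containing $v$, and these are distinct across different $v$. Hence $F_k=d\sum_{v}\1\{\sigma_k\cup\{v\}\in X_n\}$.

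To get $F_k\geqst d\,\mathsf{Bin}(n-s_0(T),p_k^F)$, I need the indicators to dominate independent $\mathsf{Ber}(p_k^F)$ variables. I would obtain this sequentially. Order the candidates, and for each $v$ condition additionally on the outcomes of the earlier candidates' $d$-simplices $\sigma_k\cup\{v'\}$. The proof of Lemma~\ref{lemma: forward estimate} goes through verbatim under this extra conditioning. The reason is that the lower bound (3.2.11)--(3.2.16) only uses:
\begin{itemize}
\item that the $d$-simplex $\sigma_k\cup\{v\}$ has probability $\lambda/n+o(1)$ given its $(d-1)$-faces, and
\item a union bound over previously explored $\sigma_j$.
\end{itemize}
The events for other $v'$ concern $d$-simplices whose faces containing $v$ do not coincide, so they only enlarge the conditioning on the lower skeleton in a way that is monotone. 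The standard sequential-coupling lemma (a sum of indicators whose conditional success probabilities are each $\ge p$ dominates $\mathsf{Bin}(m,p)$) then finishes the forward bound.

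\emph{Backward bound.} Here the candidates are vertices $v\in S_0(T)\setminus\sigma_k$ such that no $(d-1)$-face of $\sigma_k\cup\{v\}$ containing $v$ lies in $T$. A vertex $v$ fails this condition only if some face $\rho'\cup\{v\}$ lies in $T$, where $\rho'\subset\sigma_k$ has dimension $\ell-1$ after appropriate indexing. The number of such $v$ is at most the number of $(d-1)$-simplices of $T$ containing some subface $\rho\subseteq\sigma_k$. There are at most $\binom{d}{\ell}$ choices of $\rho$ of each dimension, and each $\rho$ has $\deg^{T}_{\ell-1,d-1}(\rho)\le \max_{\rho}\deg^T_{\ell-1,d-1}$. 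So the excluded set has size at most $C\sum_{\ell}\max_\rho \deg^T_{\ell-1,d-1}(\rho)$ with $C=C(d)$; the term $|\sigma_k|=d$ is absorbed into $C$. Lemma~\ref{lemma: backward estimate} then gives the per-candidate bound $p_k^B$, and the same sequential coupling yields (3.2.28).

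\emph{Independence and the combined bound.} Given $T$, $F_k$ and $B_k$ are functions of disjoint families of $d$-simplices ($v\notin T$ versus $v\in T$). For Linial--Meshulam these families are independent outright. For MRSC I would argue it through the product structure of the conditional law on the $d$-th level given the $(d-1)$-skeleton, and I regard this as the most delicate point. In any case, the coupling above can be run jointly over all candidates, which is all that is needed for (3.2.29). The two candidate sets together cover all $n$ vertices except at most $C\sum_\ell\max\deg$, since the vertices of $\sigma_k$ themselves are also absorbed into $C$. Lowering each success probability to $\min\{p_k^F,p_k^B\}$ and merging the Bernoulli families gives (3.2.29).

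The main obstacle is justifying that the per-vertex lower bounds from the lemmas persist under conditioning on the outcomes for other candidates. I would handle this by repeating the ratio argument (3.2.9)--(3.2.12) with $T$ replaced by $T$ together with the revealed outcomes, checking that the independence of $F$ from the remaining events still holds.
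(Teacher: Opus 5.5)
Your proposal follows the paper's route. The paper gives no separate proof: it presents the corollary as a direct consequence of Lemmas~\ref{lemma: forward estimate} and~\ref{lemma: backward estimate}, obtained by counting the forward candidates ($n-s_0(\mathcal{X}_{k-1})$ of them) and the backward candidates (all of $S_0(\mathcal{X}_{k-1})$ except the vertices of $\sigma_k$ and those $w$ for which some $(d-1)$-face of $\sigma_k\cup\{w\}$ containing $w$ already lies in $\mathcal{X}_{k-1}$), each contributing $d$ new $(d-1)$-simplices; your sequential coupling simply makes explicit the joint, rather than per-vertex, lower bound that the paper leaves implicit.

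The independence of $F_k$ and $B_k$, which you flag as the most delicate point, is actually immediate and needs no conditioning on the $(d-1)$-skeleton. Realize the MRSC through independent coins on all potential simplices. Every constraint defining $\{\mathcal{X}_{k-1}=T\}$ (namely $T\subseteq X_n$, and $\sigma_j\cup\{u\}\notin X_n$ for rejected $u$) involves simplices with at most one vertex outside $S_0(T)$. So the conditional law factorizes over the outside vertices and the inside block. This gives both the independence of $F_k$ from $B_k$ and exact conditional independence of the forward indicators, so no sequential coupling is needed for the forward bound.
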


On the other hand, conditional on $\mathcal{X}_{k-1}$, we have the following trivial stochastic upper bound for $E_k$: 
\[ \label{eq: 3.2.30} \tag{3.2.30}
E_k \leq_{st} d \mathsf{Bin} \lp n, \frac{\lambda}{n} \rp.
\]

\subsection{The local-weak convergence in probability of MRSCs}
\label{subsec: LWC of MRSC}

In this subsection, we establish the local-weak convergence in probability of the sequence of random simplicial complexes $X_n \sim \MRSC_d(n; \bm{p})$ satisfying the assumptions (\ref{eq: 1.3.1 assumption 1}) and (\ref{eq: 1.3.2 assumption 2}). To state it, we describe a special type of $(d-1)$ rooted random simplicial complex called the \textit{$(d-1)$-rooted Poisson tree} which is the local-weak limit of $(X_n)_{n \geq 1}$. 

\begin{defi}[$(d-1)$-rooted Poisson trees]
\label{def: Poisson tree}
A $(d-1)$-\emph{rooted Poisson tree} with parameter $\lambda > 0$ is a $(d-1)$-rooted random $d$-dimensional simplicial complex $(X, \tau)$ constructed as follows: Start with a root $(d-1)$-simplex $\tau$. At each step $k = 0, 1, \dots$, to every $(d-1)$-simplex $\tau'$ of distance $k$ from $\tau$, add $\mathsf{Poi}(\lambda)$ new vertices $v_1, \dots, v_m$ and the $d$-dimensional simplices $\tau' \cup \{v_1\}, \dots, \tau' \cup \{v_m\}$ along with all their lower-dimensional simplices to the simplicial complex constructed in the previous step. The process is independent across different $(d-1)$-simplices at each step. 
\end{defi}

Now, we are ready to state the local-weak convergence in probability of $X_n \sim \MRSC_d(n; \bm{p})$. 

\begin{thm}
\label{thm: LWC in Prob of MRSC}
Suppose $X_n \sim X_d(n; \bm{p})$ satisfies Assumption~\ref{assumption: prob parameters}. Then, as $n\to\infty$, 
\[ 
X_n \to (X, \tau) \; \text{locally weakly in probability},
\]
where $(X, \tau)$ is a $(d-1)$-rooted Poisson tree with parameter $\lambda$.
\end{thm}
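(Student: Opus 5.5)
The plan is the standard second-moment route for local-weak convergence in probability, adapted to $(d-1)$-roots. Fix $r \geq 0$ and a finite rooted complex $H_*$. Write
\[
N_n(H_*) := \sum_{\sigma \in S_{d-1}(X_n)} \1\{B_{X_n}(\sigma; r) \cong H_*\}.
\]
The argument has three parts:
\begin{enumerate}
\item show that $s_{d-1}(X_n)/(q_d\binom{n}{d}) \overset{\Pr}{\to} 1$;
\item show that $\E N_n(H_*)/(q_d \binom{n}{d}) \to \mu(B_X(\tau;r) \cong H_*)$, where $\mu$ is the law of the $(d-1)$-rooted Poisson tree;
\item show that $\E[N_n(H_*)^2]/(q_d\binom{n}{d})^2 \to \mu(B_X(\tau;r)\cong H_*)^2$.
\end{enumerate}
Chebyshev then gives (2.1.3). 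For step (1), $s_{d-1}(X_n)$ is a sum of indicators of $(d-1)$-simplices, with mean $q_d\binom{n}{d} \to \infty$ by (1.3.1). The variance is controlled by pairs sharing at least one vertex. For pairs sharing an $(\ell-1)$-face $\rho$, the covariance is bounded by $q_d^2/\Pr(\text{faces of }\rho\text{ present})$. Checking that $n^{-\ell}$ times this correction is $o(1)$ reduces to the assumption that the expected number of $(\ell-1)$-dimensional simplices containing a given $(\ell-1)$-face that are present, namely $n^{d-\ell}\cdot(\cdots)$, diverges. I expect this to follow from (1.3.1) together with the product structure of $q_d$, since missing faces only reduce dependence. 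If not, it follows from a direct computation in $p_1,\dots,p_{d-1}$.

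For step (2), by exchangeability it suffices to study a fixed $\sigma$ conditioned on $\sigma \in X_n$, and to show that $B_{X_n}(\sigma;r)$ converges in distribution to $B_X(\tau;r)$. Run the breadth-first exploration of Section~\ref{subsec: breadth-first exploration} up to depth $r$. There are two ingredients.
\begin{itemize}
\item \emph{Forward counts are asymptotically Poisson.} Since $E_k \leqst d\,\mathsf{Bin}(n,\lambda/n)$ by (3.2.30), the explored complex up to depth $r$ has $O_{\Pr}(1)$ simplices. So $k$ and all the degrees $\deg^{\mathcal{X}_{k-1}}_{\ell-1,d-1}(\rho)$ are tight. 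Since $p_{d,\ell} \to 0$ (each is at most $p_d$ times a factor tending to $0$, or directly $\le$ a quantity that is $o(1)$ under (1.3.2)), Lemma~\ref{lemma: forward estimate} gives $p_k^F = (1+o_{\Pr}(1))\lambda/n$. Combining the lower bound of Corollary~\ref{cor: probability of exploration processes} with the trivial upper bound (3.2.30), each $F_k/d$ converges to $\mathsf{Poi}(\lambda)$, and the counts are conditionally independent across steps.
\item \emph{Backward and sibling simplices are negligible.} The number of candidate vertices $w \in \mathcal{X}_{k-1}$ is $O_{\Pr}(1)$, and each is present with probability $O(1/n)$. So $\Pr(B_k + H_k > 0 \text{ for some step before depth } r) = O(1/n)$.
\end{itemize}
Hence the explored neighborhood is w.h.p. a tree of the Poisson type, with matching offspring law.

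Step (3) is the main obstacle. I will take two distinct $(d-1)$-simplices $\sigma,\sigma'$ and explore jointly: first $B(\sigma;r)$, then $B(\sigma';r)$ in the remaining complex. There are two things to show.
\begin{itemize}
\item \emph{The balls are disjoint w.h.p.} When $\sigma,\sigma'$ are disjoint, $\Pr(B(\sigma;r)\cap B(\sigma';r)\ne\emptyset) = O(1/n)$, because $B(\sigma;r)$ has $O_{\Pr}(1)$ vertices and hitting one of them costs a factor $1/n$.
\item \emph{Conditioned on disjointness, the second exploration still behaves like the Poisson tree.} This follows because Lemmas~\ref{lemma: forward estimate}--\ref{lemma: backward estimate} hold conditionally on any explored set of bounded size.
\end{itemize}
The subtlety is pairs sharing vertices, and the conditioning in the MRSC where lower faces are shared. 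Pairs sharing $\ell \geq 1$ vertices number about $n^{2d-\ell}$. Their contribution to $\E[N_n^2]$ relative to $(q_d n^d)^2$ is bounded by the same ratio already shown to vanish in step (1), using $\1\{\cdot\}\le 1$. So these pairs are negligible, and $\E[N_n^2] = (1+o(1))(\E N_n)^2$.

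Finally, the ratio $N_n/s_{d-1}(X_n)$ converges in probability by combining steps (1)--(3).
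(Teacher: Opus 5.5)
Your proposal is correct and follows essentially the paper's route. Both compute the first and second moments of $N_n$ through the breadth-first exploration: forward counts are asymptotically $d\,\mathsf{Poi}(\lambda)$, backward and sibling simplices are negligible, and the neighbourhoods of far-apart roots are asymptotically independent; this is then combined with the concentration of $s_{d-1}(X_n)$ from Lemma~\ref{lemma: total number of simplices} and Chebyshev.

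Your treatment of overlapping pairs is slightly cleaner than the paper's per-$\ell$ limit $\mu^2$. You discard them because $\sum \Pr(\sigma,\sigma'\in X_n)$ over such pairs is $o\bigl((q_d n^d)^2\bigr)$, and this also keeps track of the factor $\Pr(\sigma'\in X_n \mid \sigma\in X_n)$, which the paper's Step~3 bookkeeping drops. One correction: in a general MRSC a backward or sibling candidate has conditional probability of order $p_{d,\ell}$, which is $o(1)$ but need not be $O(1/n)$. Since $o(1)$ is all you actually use, the argument is unaffected.
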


As mentioned earlier in Section \ref{sec: prelim}, local-weak convergence in probability is a stronger concept than local-weak convergence. The local-weak convergence of random simplicial complexes following $\LM(n, \lambda/n)$ (\cite{Linial;Peled:2016}) or $\MRSC_c(n; \bm{p})$ under Assumption \ref{assumption: prob parameters} (\cite{Kanazwa:2022}) was already established in the literature. However, the present paper is the very first that explicitly establishes their local-weak convergence in probability.

Recall the definition of local-weak convergence in probability (\ref{eq: 2.1.3 local conv}). First we are going to show that the expectation of the left-hand side of (\ref{eq: 2.1.3 local conv}) converges to the right-hand side, and then we are going to show that the variance of it tends to $0$. By summing these two up, we can establish the desired convergence in probability. Since the proposed limiting object is a $(d-1)$-rooted Poisson tree, it suffices to prove the validity of the statement (\ref{eq: 2.1.3 local conv}) for an arbitrary $(d-1)$-rooted tree $T$ with finite depth. In fact, it turns out that the limiting behaviour of the local neighborhood at each $(d-1)$-simplex is governed by the forward exploration started at each simplex. 

Before discussing the detailed proof, we would like to remark that the total number of $(d-1)$-simplices in $X_n \sim \MRSC_d(n; \bm{p})$ is concentrated around its expected value $\binom{n}{d} q_d$. The following formal result was introduced in \cite[Lemma 23]{Kanazwa:2022}. 

\begin{lemma}
\label{lemma: total number of simplices}
Under the condition (\ref{eq: 1.3.1 assumption 1}) in Assumption \ref{assumption: prob parameters}, for any $r \in [1, \infty)$ it holds that
\[
\lim_{n \to \infty} \E \lb \: \left| \frac{s_{d-1} \lp X_n \rp}{n^d q_d} - \frac{1}{d!} \right|^r \: \rb = 0.
\]
In particular, $\lim_{n \to \infty} \Pr \lp s_{d-1} \lp X_n \rp > 0 \rp = 1$.
\end{lemma}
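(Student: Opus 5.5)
Since $r \ge 1$ and the ratio will turn out to be bounded in $L^2$, it is enough to handle $r=2$ and then interpolate. Set $Z_n := s_{d-1}(X_n)/(n^d q_d)$.

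\emph{Step 1 (reduction).} First I show $\E[Z_n] \to 1/d!$ and $\V(Z_n) \to 0$, so that $\E[(Z_n-1/d!)^2]\to 0$. The case $r<2$ then follows by Jensen/H\"older. For $r>2$ I will bound higher moments: it suffices to show $\sup_n \E[Z_n^{m}]<\infty$ for every integer $m$, because $Z_n \to 1/d!$ in $L^2$ together with uniform integrability of $|Z_n-1/d!|^r$ gives $L^r$ convergence. The final assertion $\Pr(s_{d-1}(X_n)>0)\to1$ is immediate from convergence in probability of $Z_n$ to $1/d!>0$.

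\emph{Step 2 (mean).} Write $s_{d-1}(X_n)=\sum_{\sigma} \1\{\sigma\in X_n\}$, summing over the $\binom{n}{d}$ potential $(d-1)$-simplices. Each term has probability $q_d$, since all proper faces of $\sigma$ of dimension $k\ge1$ must be present: there are $\binom{d}{k+1}$ of them, each included with probability $p_k$ independently given the lower skeleton. Hence $\E[Z_n]=\binom nd/n^d\to 1/d!$.

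\emph{Step 3 (variance), the main step.} I expand $\E[s_{d-1}^2]=\sum_{\sigma,\sigma'}\Pr(\sigma,\sigma'\in X_n)$ and group pairs by $j=|\sigma\cap\sigma'|$. If $j\le 1$, the two simplices share no face of dimension $\ge 1$, so the events are independent and these pairs contribute at most $(\E s_{d-1})^2$. If $2\le j\le d$, the intersection $\rho=\sigma\cap\sigma'$ is a $(j-1)$-simplex, and
\[
\Pr(\sigma,\sigma'\in X_n)=q_d^2/q_j,\qquad q_j:=\prod_{k=1}^{j-1}p_k^{\binom{j}{k+1}},
\]
since the faces of $\rho$ are shared. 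There are $O(n^{2d-j})$ such pairs, so their contribution relative to $(n^dq_d)^2$ is $O\big((n^jq_j)^{-1}\big)$. What needs to be shown is therefore $n^j q_j\to\infty$ for every $2\le j\le d$; this is where~(\ref{eq: 1.3.1 assumption 1}) enters. The point is that for $j<d$ the number of $(j-1)$-faces contained in a fixed potential $(d-1)$-simplex lets us express $q_d$ through $q_j$. Concretely, each $(d-1)$-simplex $\sigma\in X_n$ contains $\binom dj$ simplices of dimension $j-1$, and each $(j-1)$-simplex lies in at most $n^{d-j}$ potential $(d-1)$-simplices. Thus $\E s_{d-1}\le \binom dj^{-1}\cdot n^{d-j}\cdot \E s_{j-1}$ up to constants, i.e.\ $n^dq_d\lesssim n^{d-j}\cdot n^jq_j$. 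That gives only $n^jq_j\gtrsim n^{j-d}\cdot n^dq_d$, which is too weak. I would instead use monotonicity directly: $q_d = q_j\cdot \prod_k p_k^{\binom{d}{k+1}-\binom{j}{k+1}}\le q_j$, so $n^jq_j\ge n^j q_d$. This is still not enough in general.

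The right comparison is that a $(d-1)$-simplex with all faces present forces many $(j-1)$-simplices, and I expect the cleanest route to be the known fact $n^{i}q_{i}$ is "increasing-ish" in $i$: by the concavity of $i\mapsto \log(n^i q_i)$ (the exponents $\binom{i}{k+1}$ are convex in $i$, so $\log q_i$ is concave in $i$ because $\log p_k\le 0$), together with $\log(n^1q_1)=\log n\to\infty$ and $\log(n^dq_d)\to\infty$, concavity gives $\log(n^jq_j)\ge \min\{\log n,\log(n^dq_d)\}\to\infty$ for all $1\le j\le d$. This concavity step is the crux and the point I would double-check.

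\emph{Step 4 (higher moments).} The same grouping handles $\E[s_{d-1}^m]$: for $m$-tuples, the overlap structure is a union of shared faces, and each overlap pattern costs a factor $(n^{j}q_j)^{-1}$ or better by the same concavity bound. This yields $\E[Z_n^m]=O(1)$, completing Step 1.
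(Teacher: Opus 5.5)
The paper gives no proof of this lemma; it quotes it from Kanazawa (Lemma 23 there). Your argument is a correct, self-contained replacement.

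\textbf{Mean, variance, and the concavity step.} The mean computation and the variance decomposition by $j=|\sigma\cap\sigma'|$ are right. Pairs with $j\le 1$ share no face of dimension $\ge 1$ and are independent. For $2\le j\le d$ one has $\Pr(\sigma,\sigma'\in X_n)=q_d^2/q_j$ over $O(n^{2d-j})$ pairs, so $\V(Z_n)=O\bigl(\sum_{j=2}^d (n^jq_j)^{-1}\bigr)$.

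The step you flagged is also correct. Set $f(i):=\log(n^iq_i)=i\log n+\sum_{k\ge1}\binom{i}{k+1}\log p_k$. Since $\binom{i+1}{m}-2\binom{i}{m}+\binom{i-1}{m}=\binom{i-1}{m-2}\ge 0$, one gets $f(i+1)-2f(i)+f(i-1)=\sum_{k\ge1}\binom{i-1}{k-1}\log p_k\le 0$. The increments of $f$ are therefore nonincreasing, so
$f(j)\ge f(1)+\frac{j-1}{d-1}\bigl(f(d)-f(1)\bigr)\ge\min\{\log n,\log(n^dq_d)\}\to\infty$.
Note that all $p_k>0$ for $k\le d-1$ eventually, since otherwise $q_d=0$. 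The two abandoned comparisons before this step can simply be deleted. What your route buys is a fully elementary moment proof whose only nontrivial input is this log-concavity of $i\mapsto n^iq_i$, which is exactly what makes every lower-dimensional overlap negligible.

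\textbf{Making Step 4 rigorous.} Step 4 is a sketch rather than a proof. ``Each overlap pattern costs $(n^jq_j)^{-1}$'' is awkward to make rigorous pattern by pattern, because the shared faces of several simplices form a union of downward closures. Do it sequentially instead.

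Realize $X_n$ via independent $\mathsf{Ber}(p_{|\tau|-1})$ marks on all $\tau\subseteq[n]$ with $|\tau|\ge 2$, so that $\sigma\in X_n$ iff every mark on a subset of $\sigma$ equals $1$. Suppose $\sigma_1,\dots,\sigma_{i-1}\in X_n$, and let $V_{<i}:=\sigma_1\cup\dots\cup\sigma_{i-1}$. Any face of $\sigma_i$ whose mark is already fixed lies in $\sigma_i\cap V_{<i}$. Hence with $a:=|\sigma_i\cap V_{<i}|$,
\[
\Pr\bigl(\sigma_i\in X_n \,\big|\, \sigma_1,\dots,\sigma_{i-1}\in X_n\bigr)\le \frac{q_d}{q_a},
\]
and there are $O(n^{d-a})$ such $\sigma_i$.

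Summing over $\sigma_m,\sigma_{m-1},\dots,\sigma_1$ in turn, with bounds uniform in the earlier choices, gives
$\E[s_{d-1}(X_n)^m]\le \bigl(C\,n^dq_d\sum_{a=0}^{d}(n^aq_a)^{-1}\bigr)^m=O\bigl((n^dq_d)^m\bigr)$.
This uses the same concavity bound, with $q_0=q_1=1$.

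Then Cauchy--Schwarz gives $\E|Z_n-1/d!|^r\le \bigl(\E|Z_n-1/d!|^2\bigr)^{1/2}\bigl(\E|Z_n-1/d!|^{2r-2}\bigr)^{1/2}\to 0$. This settles $r>2$ without a separate uniform-integrability argument.
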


\begin{proof}[Proof of Theorem \ref{thm: LWC in Prob of MRSC} ] 
The proof is broken into four steps.

\vspace{.1in}

\noindent \textbf{Step 1: Preliminary setup.} Let $T$ be a fixed $(d-1)$-rooted tree with $s_{d-1}(T) = t$ and $S_0(T) \subseteq [n]$. Let $\rho_0$ denote the minimal element in $S_{d-1}(T)$ in the linear ordering induced by the lexicographic ordering on $S_{d-1}(\Delta_n)$. Let $(T_k)_{1 \leq k \leq t}$ be the sequence of deterministic simplicial complexes obtained by the breadth-first exploration started at $\rho_0$ and let $(\\rho_k)_{k=1}^t$ denote the sequence of the $(d-1)$-simplices in $T$ whose ordering is obtained by the breadth-first exploration on $T$ started at $\rho_0$. In other words, each $\rho_k$ is the $(d-1)$-simplex that is explored at Step $k$ in the exploration $(T_k)_{1 \leq k \leq t}$. Additionally, for each $1 \leq k \leq t$, let $x_k$ denote the number of newly detected $d$-simplices at Step $k$ of the exploration $(T_k)_{1 \leq k \leq t}$.

For convenience, let us fix $\sigma_0 := \{1, 2, \dots, d\}$, the minimal element in $S_{d-1}(\Delta_n)$. Without loss of generality, we analyze the neighborhood of $\sigma_0$ in $X_n$ conditional on $\sigma_0 \in X_n$. Let $(\mathcal{X}_k)_{k \geq 1}$ be the sequence of random simplicial complexes obtained by the breadth-first exploration of $X_n$ started at $\sigma_0$, which was described in Section~\ref{subsec: breadth-first exploration}. Recall that $(\sigma_k)_{k \geq 1}$ denote the $(d-1)$-simplces explored at each Step $k$ in the exploration $(\mathcal{X}_k)_{k \geq 1}$; here, in particular, $\sigma_1 = \sigma_0$. Additionally, recall the notations $\mathcal{F}_{d, k}, \mathcal{B}_{d, k}$, and $\mathcal{H}_{d, k}$ for the set of newly detected forward, backward, and sibling $d$-simplces at each step $k$, respectively.

Our goal is to prove that, for every fixed radius $r \in \N \cup \{0\}$, 
\[ \label{eq: 3.3.1} \tag{3.3.1}
\begin{aligned}
p^{(X_n)}(T) & := \frac{1}{s_{d-1}(X_n)} \sum_{\sigma \in S_{d-1}(X_n)} \1 \left\{ B_{X_n}(\sigma; r) \cong T \right\} \\
& \overset{\Pr}{\to} \mu \lp B_X(\tau; r) \cong T \rp
\end{aligned}
\]
as $n \to \infty$. Let $\#(T)$ denote the number of different ways to choose breadth-first orderings of $T$ and let $\bar{T}$ be the tree $T$ with fixed order given by the exploration starting at $\rho_0$. Since $\mu$ is the probability law of the $(d-1)$-rooted Poisson tree with parameter $\lambda$, we have
\[ \label{eq: 3.3.2} \tag{3.3.2}
\mu \lp B_X(\tau; r) \cong T \rp = \# (T) \mu \lp \bar{B}_X(\tau; r) = \bar{T} \rp = \#(T) \prod_{k \in [t] \: : \: \mathrm{dist}(\tau, \rho_k) < r} e^{-\lambda} \frac{\lambda^{x_k}}{x_k !},
\]
where $\bar{B}_X(\tau; r)$ is the neighborhood $B_X(\tau; r)$ equipped with the ordering of its $(d-1)$-simplices given by the breadth-first exploration of it starting at $\tau$.

\vspace{.1in}

\noindent\textbf{Step 2: First moment.} Write
\[ \label{eq: 3.3.3} \tag{3.3.3}
N_{n,r}(T) := \sum_{\sigma \in S_{d-1}(X_n)} \1 \left\{ \bar{B}_{X_n}(\sigma; r) = \bar{T} \right\},
\]
where $\bar{B}_{X_n}(\sigma; r)$ is the neighborhood $B_{X_n}(\sigma; r)$ equipped with the ordering of its $(d-1)$-simplices given by the breadth-first exploration on it starting at $\sigma$. Then, the expectation of $N_{n,r}(T)$ can be written as
\[ \label{eq: 3.3.4} \tag{3.3.4}
\begin{aligned}
\E \lb  N_{n,r}(T) \rb & = \E \lb \E \lb  N_{n,r}(T) \: | \: s_{d-1}(X_n) \rb \rb \\
& = \E \lb s_{d-1}(X_n) \E \lb  \1 \left\{ \bar{B}_{X_n}(\sigma_0; r) = \bar{T} \right\} \: | \: s_{d-1}(X_n) \rb \rb.
\end{aligned}
\]
Here, the second equality is obtained by the observation that the probability law of the neighborhood $\bar{B}_{X_n}(\sigma; r)$ of any $\sigma \in S_{d-1}(X_n)$ is equal to that of $\sigma_0$, the minimal element in $S_{d-1}(\Delta_n)$. From this, we obtain
\[ \label{eq: 3.3.5} \tag{3.3.5}
\begin{aligned}
\frac{d!}{n^{d} q_d} \E \lb N_{n,r}(T) \rb & =  \E \lb \frac{d! s_{d-1}(X_n)}{n^d q_d} \E \lb  \1 \left\{ \bar{B}_{X_n}(\sigma_0; r) = \bar{T} \right\} \: | \: s_{d-1}(X_n) \rb \rb \\
& = \E \lb \lb \frac{d! s_{d-1}(X_n)}{n^d q_d} -1 + 1 \rb \E \lb  \1 \left\{ \bar{B}_{X_n}(\sigma_0; r) = \bar{T} \right\} \: | \: s_{d-1}(X_n) \rb \rb \\
& = \E \lb \E \lb \1 \left\{ \bar{B}_{X_n}(\sigma_0; r) = \bar{T} \right\} \: | \: s_{d-1}(X_n) \rb \rb \\
& + \E \lb \lb \frac{d! s_{d-1} (X_n)}{n^d q_d} - 1 \rb \E \lb \1 \left\{ \bar{B}_{X_n}(\sigma_0; r) = \bar{T} \right\} \: | \: s_{d-1}(X_n) \rb \rb.
\end{aligned}
\]
The first term in the last expression in (\ref{eq: 3.3.5}) is equal to 
\[
\Pr \lp \bar{B}_{X_n}(b_1; r) = \bar{T} \rp.
\]
The latter term is bounded above by
\[
\E \lb \left| \frac{d! s_{d-1}(X_n)}{n^d q_d} - 1 \right| \rb,
\]
which tends to $0$ by Lemma~\ref{lemma: total number of simplices}.  Therefore, we have
\[ \label{eq: 3.3.6} \tag{3.3.6}
\E \lb N_{n,r}(T) \rb = \lp 1 + o(1) \rp \frac{n^d q_d}{d!} \Pr \lp \bar{B}_{X_n}(b_1; r) = \bar{T} \rp.
\]

Next, we analyze the probability $\Pr \lp \bar{B}_{X_n}(b_1; r) = \bar{T} \rp$. Let $F_{k,d} := |\mathcal{F}_{d,k}|$ if $\mathrm{dist}(\sigma_0, \sigma_k) < r$, and $F_{k,d} : = 0$ otherwise. Let $D_{k,d} := |\mathcal{B}_{d,k} \cup \mathcal{H}_{d,k}|$. Then, 
\[ \label{eq: 3.3.7} \tag{3.3.7}
\begin{aligned}
\Pr \lp \bar{B}_{X_n}(\sigma_0; r) = \bar{T} \rp & = \Pr \lp (F_{k,d}, D_{k,d}) = (x_k, 0) \text{ for all } k \in [t] \rp \\
& = \Pr \lp (F_{[k],d}, D_{[k],d}) = (x_{[k]}, 0_{[k]}) \text{ for all } k \in [t] \rp \\
& = \prod_{k=1}^t \Pr \lp (F_{k,d}, D_{k,d}) = (x_k, 0) \: | \: (F_{[k-1],d}, D_{[k-1],d}) = (x_{[k-1]}, 0_{[k-1]}) \rp,
\end{aligned}
\]
where we use the abbreviation $x_{[k]} = (x_1, \dots, x_k)$ ($F_{[k],d}$, $D_{[k],d}$, and $0_{[k]}$ likewise) for brevity. By Lemma \ref{lemma: forward estimate}, the probability that each forward $d$-simplex appears at Step $k$ is bounded above by $\lambda/n$ and below by $p_k^F$. Since $T$ is a fixed finite graph, $k$ and all the degree terms in the expression of $p_k^F$~(\ref{eq: forward prob 3.2.3}) are $O(1)$ as $n \to \infty$. Thus, we have
\[ \label{exp: 3.3.8} \tag{3.3.8}
\mathsf{Bin} \lp n_k, (1 - o(1)) \frac{\lambda}{n} \rp \leqst F_{d,k} \leqst \mathsf{Bin} \lp n_k, (1 + o(1))\frac{\lambda}{n} \rp,
\]
where $n_k = n - s_0(T_{k-1})$, i.e., $n_k$ denotes the number of vertices in $[n]$ that have not been detected until Step $(k-1)$ of the exploration process of $T$. On the other hand, the probability that each backward or sibling $d$-simplex appears is always $o(1)$ as $n \to \infty$. This rough bound gives us
\[ \label{eq: 3.3.9} \tag{3.3.9}
D_{d,k} \leqst \mathsf{Bin} \lp s_0(T_{k-1}), o(1) \rp.
\]
Again, recall that $s_0(T_{k-1}) = O(1)$ as $n \to  \infty$ since we fixed a finite simplicial complex $T$. Additionally, conditional on $\mathcal{X}_{k-1}$, $F_{d,k}$ and $D_{d,k}$ are independent. Therefore,
\[ \label{eq: 3.3.10} \tag{3.3.10}
\begin{aligned}
\Pr \lp \bar{B}_{X_n}(\sigma_0; r) = \bar{T} \rp & \sim \prod_{k\in [t] \: : \: \mathrm{dist}(\sigma_0, \sigma_k) < r} \Pr \lp \mathsf{Bin}(n_k, \lambda/n) = x_k \rp \times \prod_{k \in [t]} \lp 1 - \lambda/n \rp^{\sum_{j=0}^{k-1} x_j} \\
& \to \prod_{k \in [t] \: : \: \mathrm{dist}(\sigma_0, a_k) < r } e^{-\lambda} \frac{\lambda^{x_k}}{x_k !} \\
& = \mu \lp \bar{B}_X(\tau; r) = \bar{T} \rp.
\end{aligned}
\]
Here the symbol $\sim$ means that the ratio of the both sides of it tends to $1$ as $n \to \infty$. To sum up, we have obtained that
\[ \label{eq: 3.3.11} \tag{3.3.11}
\frac{d!}{n^d q_d} \E \lb N_{n,r}(T) \rb \to \mu \lp \bar{B}_X(\tau; r) = \bar{T} \rp.
\]

\vspace{.1in}

\noindent \textbf{Step 3: Second moment.} We have
\[ \label{eq: 3.3.12} \tag{3.3.12}
\begin{aligned}
& \E \lb N_{n,r}(T)^2 \rb  \\
& = \E \lb \sum_{\sigma \in S_{d-1}(X_n)} \1 \left\{ \bar{B}_{X_n}(\sigma; r) = \bar{T} \right\} + \sum_{\sigma \neq \sigma'} \1 \left\{ \bar{B}_{X_n}(\sigma; r) = \bar{T}, \bar{B}_{X_n}(\sigma'; r) = \bar{T} \right\} \rb.
\end{aligned}
\]
The first term in the right-hand side of (\ref{eq: 3.3.12}) satisfies
\[ \label{eq: 3.3.13} \tag{3.3.13}
\begin{aligned}
\E \lb \sum_{\sigma \in S_{d-1}(X_n)} \1 \{ \bar{B}_{X_n}(\sigma; r) = \bar{T} \} \rb & = \E \lb N_{n,r}(T) \rb \\
& = \frac{n^d q_d}{d!} \mu \lp \bar{B}_X(\tau; r) = \bar{T} \rp (1 + o(1)).
\end{aligned}
\]
by \textbf{Step 1}.

The latter term can be written as follows:
\[ \label{eq: 3.3.14} \tag{3.3.14}
\sum_{\ell = 0}^{d-2} \E \lb  \sum_{\sigma \neq \sigma'} \1 \{ \bar{B}_{X_n}(\sigma; r) = \bar{T}, \bar{B}_{X_n}(\sigma'; r) = \bar{T} \} \1\{|\sigma \cap \sigma'| = \ell \} \rb.
\]
For each $0 \leq \ell \leq d-1$, let $\sigma_{0, \ell}$ be the minimal element in $S_{d-1}(\Delta_n)$ that has $\ell$ common elements with $\sigma_0$. Then, we obtain
\[ \label{eq: 3.3.15} \tag{3.3.15}
\begin{aligned}
& \E \lb  \sum_{\sigma \neq \sigma'} \1 \{ \bar{B}_{X_n}(\sigma; r) = \bar{T}, \bar{B}_{X_n}(\sigma'; r) = \bar{T} \} \1\{|\sigma \cap \sigma'| = \ell \} \rb \\
& = \E \lb \E \lb  \sum_{\sigma \neq \sigma'} \1 \{ \bar{B}_{X_n}(\sigma; r) = \bar{T}, \bar{B}_{X_n}(\sigma'; r) = \bar{T} \} \1\{|\sigma \cap \sigma'| = \ell \} \: | \: s_{d-1}(X_n) \rb \rb \\
& = \E \lb s_{d-1}(X_n) \binom{d}{\ell} \binom{n}{d - \ell} \Pr \lp \bar{B}_{X_n}(\sigma_0; r) = \bar{T}, \bar{B}_{X_n}(\sigma_{0,\ell}; r) = \bar{T} \rp \rb.
\end{aligned}
\]
By Lemma~\ref{lemma: total number of simplices}, using a similar argument as that used in (\ref{eq: 3.3.5}), we can show that the last expression of (\ref{eq: 3.3.15}) is equal to 
\[ \label{eq: 3.3.16} \tag{3.3.16}
\lp 1 + o(1) \rp \frac{n^d q_d}{d!} \binom{d}{\ell} \binom{n}{d-\ell}  \Pr \lp \bar{B}_{X_n}(\sigma_0; r) = \bar{T}, \bar{B}_{X_n}(\sigma_{0,\ell}; r) = \bar{T} \rp.
\]

We will show that for every $0 \leq \ell \leq d-1$,
\[ \label{eq: 3.3.17} \tag{3.3.17}
\Pr \lp \bar{B}_{X_n}(\sigma_0; r) = \bar{T}, \bar{B}_{X_n}(\sigma_{0,\ell}; r) = \bar{T} \rp \to \mu \lp \bar{B}_{X}(\tau; r) = \bar{T} \rp^2.
\]
Let us split the left-hand side of the above as follows:
\[ \label{eq: 3.3.18} \tag{3.3.18}
\begin{aligned}
\Pr \lp \bar{B}_{X_n}(\sigma_0; r) = \bar{T}, \bar{B}_{X_n}(\sigma_{0,\ell}; r) \rp & = \Pr \lp \bar{B}_{X_n}(\sigma_0; r) = \bar{T}, \bar{B}_{X_n}(\sigma_{0,\ell}; r) = \bar{T}, \mathrm{dist}_{X_n} (\sigma_0, \sigma_{0,\ell}) > 2r \rp \\
& + \Pr \lp \bar{B}_{X_n}(\sigma_0; r) = \bar{T}, \bar{B}_{X_n}(\sigma_{0,\ell}; r) = \bar{T}, \mathrm{dist}_{X_n} (\sigma_0, \sigma_{0,\ell}) \leq 2r \rp
\end{aligned}
\]
The first term in the right-hand side of (\ref{eq: 3.3.18}) is trivially upper bounded by
\[ \label{eq: 3.3.19} \tag{3.3.19}
\Pr \lp \mathrm{dist}_{X_n}(\sigma_0, \sigma_{0,\ell}) \leq 2r \rp
\]
Recall that the number of $(d-1)$-simplices that share $\ell$ vertices with $\sigma_0$ is $\binom{d}{\ell} \binom{n}{d - \ell}$. Because ehy are all exchangeable, the expression in (\ref{eq: 3.3.19}) is upper bounded by
\[ \label{eq: 3.3.20} \tag{3.3.20}
\frac{1}{\binom{d}{\ell} \binom{n}{d - \ell}} \E \lb s_{d-1} \lp B_{X_n}(\sigma_0; 2r) \rp \rb. 
\]
Recall that the number of newly detected $(d-1)$-simplices at each step (the sum of forward,  backward, and sibling $(d-1)$-simplices) is dominated by $d \mathsf{Bin}(n, \lambda/n)$, conditional on each previous step. Hence, the expectation $\E\lb s_{d-1}(B_{X_n}(\sigma_0; 2r)) \rb$ is upper bounded by the expected number of total progeny in a branching process with progeny distribution $d \mathsf{Bin}(n, \lambda/n)$. This means that $\E \lb s_{d-1}(B_{X_n}(\sigma_0; 2r) ) \rb \sim \sum_{j = 0}^{2r} (d \lambda)^{j}$ as $n \to \infty$. Since $r$ is fixed, this quantity is finite. Thus, we have
\[ \label{eq: 3.3.21} \tag{3.3.21}
\Pr \lp \mathrm{dist}_{X_n}(\sigma_0, \sigma_{0, \ell}) \leq 2r \rp = O \lp n^{-(d - \ell)} \rp,
\]
which implies that 
\[ \label{eq: 3.3.22} \tag{3.3.22}
\Pr \lp \mathrm{dist}_{X_n}(\sigma_0, \sigma_{0, \ell}) \leq 2r \rp \to 0
\]
as $n \to \infty$ for every $0 \leq \ell \leq d-1$.

As for the first term on the right-hand side of (\ref{eq: 3.3.18}), let us express it as
\[ \label{eq: 3.3.23} \tag{3.3.23}
\Pr \lp  \bar{B}_{X_n}(\sigma_{0, \ell}; r) = \bar{T}, \mathrm{dist}_{X_n} (\sigma_0, \sigma_{0, \ell}) > 2r  \: | \: \bar{B}_{X_n}(\sigma_0; r) = \bar{T} \rp \Pr \lp \bar{B}_{X_n}(\sigma_0; r) = \bar{T} \rp. 
\]
The second term on the right-hand side of (\ref{eq: 3.3.23}) converges to $\mu \lp \bar{B}_{X}(\tau; r) = \bar{T} \rp$ by (\ref{eq: 3.3.10}). The first term can be further written as
\[ \label{eq: 3.3.24} \tag{3.3.24}
\begin{aligned}
& \Pr \lp \bar{B}_{X_n}(\sigma_{0, \ell}; r) = \bar{T} \: | \: \mathrm{dist}_{X_n}(\sigma_0, \sigma_{0, \ell}) > 2r, \bar{B}_{X_n}(\sigma_0; r) = \bar{T} \rp \\
& \qquad \qquad \qquad \qquad \qquad \times \Pr \lp \mathrm{dist}_{X_n}(\sigma_0, \sigma_{0, \ell}) > 2r \: | \: \bar{B}_{X_n}(\sigma_0; r) = \bar{T} \rp.
\end{aligned}
\]
Conditional on $\bar{B}_{X_n}(\sigma_0; r) = \bar{T}$, the event $\mathrm{dist}_{X_n}(\sigma_0, \sigma_{0, \ell}) > 2r$ took place if and only if $\sigma_{0, \ell}$ does not belong to $B_{X_n}(\sigma_0; 2r)$ given that $\bar{B}_{X_n}(\sigma_0; r) = \bar{T}$. Similarly as discussed in the derivation of (\ref{eq: 3.3.22}), we can show that $s_{d-1}(B_{X_n}(\sigma_0; 2r))$ is finite w.h.p. as $n \to \infty$. Hence, 
\[ \label{eq: 3.3.25} \tag{3.3.25}
\Pr \lp \mathrm{dist}_{X_n}(\sigma_0, \sigma_{0, \ell}) > 2r \: | \: \bar{B}_{X_n}(\sigma_0; r) = \bar{T} \rp \to 1
\]
as $n \to \infty$. Finally, conditional on the event $\{\mathrm{dist}_{X_n}(\sigma_0, \sigma_{0, \ell}) > 2r\} \cap \{ \bar{B}_{X_n}(\sigma_0; r) = \bar{T}) \}$, the probability that $\bar{B}_{X_n}(\sigma_{0, \ell}; r) = \bar{T}$ is equal to the probability that the neighborhood $B_{X_n}(\sigma_{0, \ell}; r)$ is isomorphic to $T$ conditional on that the breadth-first exploration of $B_{X_n}(\sigma_{0, \ell}; r)$ only uses vertices that do not create any $d$-simplices adjacent to any $(d-1)$-simplices in $T$. Since $T$ is a fixed finite simplicial complex, the number of vertices used in this exploration is $n - s_0(T)$, which is asymptotically equivalent to $n$. This gives us
\[ \label{eq: 3.3.26} \tag{3.3.26}
\begin{aligned}
\Pr \lp \bar{B}_{X_n}(\sigma_{0, \ell}; r) = \bar{T} \: | \: \mathrm{dist}_{X_n}(\sigma_0, \sigma_{0, \ell}) > 2r, \bar{B}_{X_n}(\sigma_0; r) = \bar{T} \rp & = \Pr \lp \bar{B}_{X_n}(\sigma_{0, \ell}) = \bar{T} \rp + o(1) \\
& \to \mu \lp \bar{B}_{X}(\tau; r) = \bar{T} \rp
\end{aligned}
\]
as $n \to \infty$. Hence, the expression in (\ref{eq: 3.3.16}) is equal to
\[ \label{eq: 3.3.27} \tag{3.3.27}
\lp 1 + o(1) \rp \frac{n^d q_d}{d!} \binom{d}{\ell} \binom{n}{d - \ell} \lp o(1) + \mu \lp \bar{B}_X(\tau; r) = \bar{T} \rp^2 \rp.
\]
Plugging this back to the summation in (\ref{eq: 3.3.14}), we obtain that the expression is asymptotically equivalent to
\[ \label{eq: 3.3.28} \tag{3.3.28}
\sum_{\ell =0 }^{d-2} \frac{n^d q_d}{d!} \binom{d}{\ell} \binom{n}{d - \ell} \mu \lp \bar{B}_X(\tau; r) = \bar{T} \rp^2.
\]
This concludes that
\[ \label{eq: 3.3.29} \tag{3.3.29}
\E \lb N_{n,r}(T)^2 \rb = \E \lb N_{n,r}(T) \rb + \sum_{\ell = 0}^{d-1} \lp 1 + o(1) \rp \frac{n^d q_d}{d!} \binom{d}{\ell} \binom{n}{d-\ell} \mu \lp \bar{B}_X(\tau; r) = \bar{T} \rp^2 \lp 1 + o(1) \rp.
\]

\vspace{.1in}

\noindent \textbf{Step 4: Completion of the proof.} Combining (\ref{eq: 3.3.11}) and (\ref{eq: 3.3.29}), we obtain 
\[ \label{eq: 3.3.30} \tag{3.3.30}
\frac{\V \lp N_{n,r}(T) \rp}{\E \lb N_{n,r}(T) \rb^2} \to 0.
\]
Thus, by the Chebyshev inequality, we can conclude that
\[ \label{eq: 3.3.31} \tag{3.3.31}
\frac{N_{n,r}(T)}{\E \lb N_{n,r}(T) \rb} \overset{\Pr}{\to} 1.
\]
Accordingly, 
\[ \label{eq: 3.3.32} \tag{3.3.32}
\frac{N_{n,r}(T)}{s_{d-1} (X_n)} \overset{\Pr}{\to} \mu \lp \bar{B}_X(\tau; r) = \bar{T} \rp,
\]
which implies that
\[ \label{eq: 3.3.33} \tag{3.3.33}
p^{(X_n)}(T) = \#(T) \frac{N_{n,r}(T)}{s_{d-1} (X_n)} \overset{\Pr}{\to} \#(T) \mu \lp \bar{B}_X(\tau; r) = \bar{T} \rp = \mu \lp B_X(\tau; r) \cong T \rp,
\]
This completes the proof.
\end{proof}

As a result, we obtain the following convergence result of the number of $d$-dimensional connected components in $X_n$:

\begin{prop}
\label{prop: number of components in MRSC}
Let $X_n \sim \MRSC_d(n; \bm{p})$ satisfying Assumption~\ref{assumption: prob parameters}. Let $C_n$ denote the number of $d$-dimensional connected components in $X_n$. Then, it holds that
\[
\frac{C_n}{s_{d-1}(X_n)} \overset{\Pr}{\to} \gamma_{\lambda} - \lambda \frac{d}{d+1} \gamma_{\lambda}^{d+1},
\]
where $\gamma_{\lambda}$ is the extinction probability of a branching process with offspring distribution $d \mathsf{Poi}(\lambda)$, i.e., $\gamma_{\lambda}$ is the smallest solution to $\gamma = e^{-\lambda(1 - \gamma^d)}$.
\end{prop}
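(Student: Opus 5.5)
Combining Theorem~\ref{thm: LWC in Prob of MRSC} with Proposition~\ref{prop: number of components} gives
\[
\frac{C_n}{s_{d-1}(X_n)} \overset{\Pr}{\to} \E_\mu\lb \frac{1}{s_{d-1}(\mathcal{C}(o))}\rb,
\]
where $(X,o)$ is the $(d-1)$-rooted Poisson tree with parameter $\lambda$. So the whole task is to evaluate this expectation and show it equals $\gamma_\lambda - \lambda\frac{d}{d+1}\gamma_\lambda^{d+1}$.

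\textbf{The tree is a branching process on $(d-1)$-simplices.} In the Poisson tree the root has $\mathsf{Poi}(\lambda)$ new $d$-simplices, each contributing $d$ new $(d-1)$-simplices. The same holds for every later $(d-1)$-simplex, independently. Hence $s_{d-1}(\mathcal{C}(o))$ is the total progeny of a Galton--Watson process with offspring $d\,\mathsf{Poi}(\lambda)$, and $\Pr(s_{d-1}(\mathcal{C}(o))<\infty)=\gamma_\lambda$.

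\textbf{Evaluating the expectation by counting.} Rather than computing $\E[1/|T|]$ through the progeny distribution, I would use a counting argument on finite trees:
\[
\E\Big[\tfrac{1}{|T|}\1\{|T|<\infty\}\Big] = \sum_{\text{finite }T}\frac{\Pr(T)}{|T|}.
\]
For a finite $d$-dimensional tree component, Euler-type counting gives
\[
\#\{(d-1)\text{-simplices}\} = 1 + d\cdot\#\{d\text{-simplices}\},
\]
so
\[
1 = s_{d-1}(\mathcal{C}) - d\,s_d(\mathcal{C}).
\]
Dividing by $s_{d-1}$, the quantity $1/s_{d-1}$ becomes the fraction of vertices of $\mathcal{C}$ per simplex, minus $d$ times the fraction of $d$-simplices. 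By unimodularity of the Poisson tree (mass transport), $\E[s_d(\mathcal{C}(o))/s_{d-1}(\mathcal{C}(o));\ \text{finite}]$ equals $\frac{1}{d+1}$ times the expected number of $d$-simplices containing $o$ whose component is finite. Each $d$-simplex is shared by $d+1$ of its $(d-1)$-faces, which is where the factor $\frac{1}{d+1}$ comes from.

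\textbf{Computing the two terms.} The root has $\mathsf{Poi}(\lambda)$ incident $d$-simplices, and the component is finite iff all descendant subtrees die out. The expected number of incident $d$-simplices on the event that everything is finite is $\lambda\gamma_\lambda^{d}\cdot\gamma_\lambda$: by Poisson thinning, one marked simplex with its $d$ other faces extinct contributes $\gamma_\lambda^d$, and the remaining simplices of the root all extinct contribute $\gamma_\lambda$. This gives
\[
\E\Big[\tfrac{1}{s_{d-1}}\1\{\text{finite}\}\Big] = \gamma_\lambda - \frac{d}{d+1}\lambda\gamma_\lambda^{d+1},
\]
and the convention $1/\infty=0$ kills the infinite case.

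\textbf{Main obstacle.} The hardest step is justifying the mass-transport identity rigorously. I would do it directly instead of invoking general unimodularity: condition on the finite component's shape and use that the Poisson tree law, re-rooted at a uniform $(d-1)$-simplex of a finite component, is size-biased by $s_{d-1}$. This can be verified from the explicit product formula~(\ref{eq: 3.3.2}).
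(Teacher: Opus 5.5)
Your proposal is correct. It differs from the paper only in how the limiting expectation is evaluated. The reduction via Theorem~\ref{thm: LWC in Prob of MRSC} and Proposition~\ref{prop: number of components}, and the identification of $s_{d-1}(\mathcal{C}(o))$ with the total progeny $\mathcal{C}$ of a Galton--Watson process with offspring $d\,\mathsf{Poi}(\lambda)$, are exactly the paper's.

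The paper then argues analytically. It shows that $G(z)=\E[z^{\mathcal{C}}\1\{\mathcal{C}<\infty\}]$ satisfies $G(z)=z e^{\lambda(G(z)^d-1)}$ and writes $\E[1/\mathcal{C}]=\int_0^1 G(t)\,t^{-1}\,\d t$. The substitution $u=G(t)$, with $\d t/t=(u^{-1}-\lambda d u^{d-1})\,\d u$, then gives $\int_0^{\gamma_\lambda}(1-\lambda d u^d)\,\d u$. This uses only the progeny law, plus monotonicity of $u\mapsto ue^{-\lambda(u^d-1)}$ on $[0,\gamma_\lambda]$, which holds because $\lambda d\gamma_\lambda^d\le 1$ at the smallest fixed point.

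You instead write $1/s_{d-1}=1-d\,s_d/s_{d-1}$ on finite components. You then use mass transport to replace the non-local $\E[s_d/s_{d-1};\,\text{finite}]$ by the local quantity $\frac{1}{d+1}\E[\deg(o)\1\{\mathcal{C}(o)\text{ finite}\}]=\frac{1}{d+1}\E[N\gamma_\lambda^{dN}]=\frac{1}{d+1}\lambda\gamma_\lambda^{d+1}$, with $N\sim\mathsf{Poi}(\lambda)$. All of this checks out. Your route explains the two terms as the limiting form of the forest identity ``$\#\text{components}=s_{d-1}-d\,s_d$''. It also works for any unimodular limit whose finite components are simplicial trees.

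The price, which the paper's route avoids entirely, is the step you flag. Because the transport involves $1/s_{d-1}(\mathcal{C}(o))$ and a finiteness indicator, you need genuine re-rooting invariance: the root must be uniform over $S_{d-1}(H)$ given the unrooted finite shape $H$. If you derive this from \eqref{eq: 3.3.2}, count automorphisms in the simplicial sense, i.e.\ vertex bijections preserving simplices and mapping $\rho$ onto itself. These need not match the automorphisms of the incidence tree. Labelling new vertices by i.i.d.\ uniform marks gives, for every finite $(H,\rho)$,
\[
\mu\bigl((\mathcal{C}(o),o)\cong(H,\rho)\bigr)=\frac{d!\,e^{-\lambda s_{d-1}(H)}\lambda^{s_d(H)}}{|\mathrm{Aut}(H,\rho)|}.
\]
The orbit--stabilizer theorem then yields exactly the uniform-root (``size-biased by $s_{d-1}$'') property you need.
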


\begin{proof}[Proof of Proposition~\ref{prop: number of components in MRSC}]

The proposed results immediately follows Theorem~\ref{thm: LWC in Prob of MRSC} and Proposition~\ref{prop: number of components}. By Theorem~\ref{thm: LWC in Prob of MRSC}, we know that $(X_n)_{n \geq 1}$ converges locally weaky in probability to $(d-1)$-rooted Poisson tree $(X, \tau)$ with parameter $\lambda$. Thus, it suffices to calculate
\[ \label{eq: 3.3.34} \tag{3.3.34}
\E \lb \frac{1}{s_{d-1} \lp \mathcal{C}(\tau) \rp} \rb.
\]
Note that the probability law of $s_{d-1} \lp \mathcal{C}(\tau) \rp$ is equal to that of the total progeny of a branching process with offspring distribution $d \mathsf{Poi}(\lambda)$. $\mathcal{C}$ denote the total progeny the branching process. Let $G(z)$ be the probability generating function of $\mathcal{C}$, defined by
\[ \label{eq: 3.3.35} \tag{3.3.35}
G(z) := \E \lb z^{\mathcal{C}}\1 \{ \mathcal{C} < \infty\} \rb \quad z \in [0, 1].
\]
We can write
\[ \label{eq: 3.3.36} \tag{3.3.36}
\begin{aligned} 
G(z) = \E \lb z^{1 + \sum_{i=1}^{Z_1} \mathcal{C}_i} \1\{ \mathcal{C} < \infty \} \rb = z \E \lb \E \lb \prod_{i=1}^{Z_1} z^{\mathcal{C}_i} \1\{ \mathcal{C}_i < \infty \} \: | \: Z_1 \rb \rb & = z \E \lb G(z)^{Z_1} \rb \\
& = z g(G(z)),
\end{aligned}
\]
where $g(\gamma) := e^{\lambda (\gamma^d -1)}$ is the probability generating function of $Z_1 \sim d \mathsf{Poi}(\lambda)$.Thus, we have
\[ \label{eq: 3.3.37} \tag{3.3.37}
G(z) = z e^{\lambda (G(z)^d - 1)}.
\]
Note that $G(1) = \Pr (\mathcal{C} < \infty) =: \gamma_{\lambda}$ is the extinction probability of the branching process, which is the smallest solution to $\gamma = g(\gamma)$.

Next, we express $\E[1/\mathcal{C}]$ in terms of $G(z)$. We can write
\[ \label{eq: 3.3.38} \tag{3.3.38}
\E \lb \frac{1}{\mathcal{C}} \rb = \sum_{n=1}^{\infty} \frac{1}{n} \Pr \lp \mathcal{C} = n \rp = \sum_{n=1}^{\infty} \Pr \lp \mathcal{C} = n \rp \int_{t=0}^1 t^{n-1} \d t.
\]
Here, the last equality is given by the equation 
\[
\frac{1}{n} = \int_{t=0}^1 t^{n-1} \d t
\]
for every $n \in \N$. By the monotone convergence theorem, the rightmost expression of (\ref{eq: 3.3.38}) is equal to
\[ \label{eq: 3.3.39} \tag{3.3.39}
\int_{t=0}^1 \sum_{n=1}^{\infty} \Pr \lp \mathcal{C} = n \rp t^{n-1} \d t = \int_{t=0}^1 \frac{1}{t} \sum_{n=1}^{\infty} \Pr \lp \mathcal{C} = n \rp t^n \d t = \int_{t=0}^1 \frac{G(t)}{t} \d t.
\]

To calculate the integral in the rightmost expression in (\ref{eq: 3.3.39}), let us apply the change of variable formula given by $u = G(t)$. Then, we obtain
\[ \label{eq: 3.3.40} \tag{3.3.40}
\int_{t=0}^1 \frac{G(t)}{t}  \d t = \int_{u=0}^{\gamma_{\lambda}} (1 - \lambda d u^d) \d u = \gamma_{\lambda} - \lambda \frac{d}{d+1} \gamma_{\lambda}^{d+1}.
\]
This completes the proof.

\end{proof}

According to Proposition~\ref{prop: upper bound for the giant}, we obtain the following upper bound for $s_{d-1} \lp \mathcal{C}_{\max} \rp$ immediately. 

\begin{prop}
\label{prop: MRSC giant size upper bound}
Let $X_n \sim \MRSC_d(n; \bm{p})$ satisfying Assumption \ref{assumption: prob parameters}. Then, it holds that for every $\e > 0$,
\[
\Pr \lp s_{d-1} \lp \mathcal{C}_{\max} \rp \leq s_{d-1} (X_n) (\zeta_{\lambda} + \e) \rp \to 1
\]
as $n \to \infty$, where $\zeta_{\lambda}$ is the survival probability of the $(d-1)$-rooted Poisson tree with parameter $\lambda$.
\end{prop}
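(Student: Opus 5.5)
This is a direct combination of Theorem~\ref{thm: LWC in Prob of MRSC} with Proposition~\ref{prop: upper bound for the giant}. The plan is to check the two hypotheses of Proposition~\ref{prop: upper bound for the giant} for $X_n \sim \MRSC_d(n;\bm{p})$ and then identify the constant $\zeta$ appearing there.

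\emph{Hypotheses.} First, we need $s_{d-1}(X_n) \to \infty$ in probability. By Lemma~\ref{lemma: total number of simplices}, $d!\, s_{d-1}(X_n)/(n^d q_d) \to 1$ in $L^1$, hence in probability. By (\ref{eq: 1.3.1 assumption 1}), $n^d q_d \to \infty$, so $s_{d-1}(X_n) \to \infty$ with high probability. Proposition~\ref{prop: upper bound for the giant} is stated with a deterministic divergence hypothesis, but its proof only uses it in the form ``with high probability''. If needed, one can intersect with the event $\{s_{d-1}(X_n) \geq \tfrac12 n^d q_d/d!\}$, whose probability tends to $1$. Second, we need local weak convergence in probability. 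Theorem~\ref{thm: LWC in Prob of MRSC} gives exactly this, with limit the $(d-1)$-rooted Poisson tree $(X,\tau)$ with parameter $\lambda$.

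\emph{Conclusion.} Proposition~\ref{prop: upper bound for the giant} now yields
\[
\Pr\bigl(s_{d-1}(\mathcal{C}_{\max}) \le s_{d-1}(X_n)(\zeta+\e)\bigr) \to 1,
\qquad \zeta = \mu\bigl(s_{d-1}(\mathcal{C}(\tau)) = \infty\bigr).
\]
By definition, $\zeta$ is the survival probability of the Poisson tree, i.e.\ $\zeta_\lambda$. This proves the statement.

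\emph{Identifying $\zeta_\lambda$ (optional).} As in the proof of Proposition~\ref{prop: number of components in MRSC}, $s_{d-1}(\mathcal{C}(\tau))$ has the law of the total progeny of a branching process with offspring distribution $d\,\mathsf{Poi}(\lambda)$. Hence $\zeta_\lambda = 1-\gamma_\lambda$, where $\gamma_\lambda$ is the smallest root of $\gamma = e^{-\lambda(1-\gamma^d)}$. For $d=2$ this is consistent with Theorem~\ref{thm: supercritical MRSC}.

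The only step needing any care is the mismatch between ``$s_{d-1}(X_n)\to\infty$'' as a deterministic hypothesis and its availability here only with high probability. I expect this to be routine.
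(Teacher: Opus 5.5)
Your proposal is correct and follows the paper's route: the paper obtains this statement immediately by combining Theorem~\ref{thm: LWC in Prob of MRSC} with Proposition~\ref{prop: upper bound for the giant}, with $\zeta_\lambda$ identified as the survival probability of the Poisson tree. Your check that $s_{d-1}(X_n)\to\infty$ with high probability, via Lemma~\ref{lemma: total number of simplices} and (\ref{eq: 1.3.1 assumption 1}), is a detail the paper leaves implicit, and your handling of it is sound.
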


\section{Proofs of the main results}
\label{sec: proof}

In this section, we prove our main results about the phase transition of the giant $2$-dimensional components in $X_n \sim \MRSC_2(n; \bm{p}_n)$. 

\subsection{Concentration in the supercritical case: Proof of Theorem \ref{thm: supercritical MRSC}}

To prove the concentration of $s_{d-1}(\mathcal{C}_{\max})$, we need to prove that $X_n \sim \MRSC_d(n; \bm{p})$ satisfies the property (\ref{eq: 2.2.3}) in Proposition~\ref{prop: side condition for the giant} which establishes the lower bound of $s_{d-1}(\mathcal{C}_{\max})$ that matches the upper bound obtained in Proposition~\ref{prop: MRSC giant size upper bound}. Under appropriate conditions, the property (\ref{eq: 2.2.3}) is equivalent to the following:
\[ \label{eq: 4.1.1} \tag{4.1.1}
\lim_{r \to \infty} \limsup_{n \to \infty} \Pr \lp \sigma_1 \overset{\sss{d}}{\longleftrightarrow} \sigma_2, s_{d-1} \lp \partial B(\sigma_1; r) \rp \geq r, s_{d-1} \lp \partial B(\sigma_2; r) \rp \geq r \rp = 1.
\]
Recall that $\sigma_1$ and $\sigma_2$ are $(d-1)$-simplices in $X_n$ chosen uniformly at random and independent of each other. For brevity, let us write the event $s_{d-1} \lp \partial B(\sigma_i; r) \rp \geq r$ as $\mathcal{L}_n^{(i)}(r)$ for $i = 1, 2$ and call it ``local survival'' of $\sigma_i$ for radius $r$. 

Although the property (\ref{eq: 4.1.1}) (and the original property (\ref{eq: 2.2.3}) itself is a direct extension of its version for random graphs~\cite{Hofstad:2021, Hofstad:2024}, we find that how this property takes place in random simplicial complexes is intrinsically different from situations in random graphs. To prove (\ref{eq: 4.1.1}), we have to show that conditional on local survival $\mathcal{L}_n^{(1)}(r) \cap \mathcal{L}_n^{(2)}(r)$ for large enough $r$, both components $\mathcal{C}(\sigma_1)$ and $\mathcal{C}(\sigma_2)$ grow further to be large enough so that they become connected eventually. The matter is to figure out how large each component needs to be and how to justify that each component reaches that size w.h.p, conditional on local survival. Let $\sigma$ and $\sigma'$ are two disjoint $(d-1)$-simplices on the vertex set $[n]$. Then, conditional on $\sigma, \sigma' \in X_n$, the probability that $(d-1)$-simplices $\sigma$ and $\sigma'$ are connected by some $d$-dimensional path in $X_n \sim \MRSC_d(n; \bm{p})$ is bounded below by $\prod_{i=1}^{d} p_i^{i \binom{d+1}{i+1}} = \Theta \lp n^{-\sum_{i=1}^d \alpha_i i \binom{d+1}{i+1}} \rp$, which is the ``cheapest way'' to connect them. For example, when $d = 2$, this way requires three new edges and two new triangles, which takes place with probability $p_1^3 p_2^2 = \Theta \lp n^{-3 \alpha_1 - 2\alpha_2} \rp$. Thus, naively speaking, it suffices to prove the following: conditional on large local survival, each component reaches a stage in which 
\[ \label{eq: 4.1.2} \tag{4.1.2}
s_{d-1} \lp \partial B(\sigma_i; r_n) \rp \geq \omega(n^{ \frac{\sum_{i=1}^d \alpha_i i \binom{d+1}{i+1}}{2}} ) \quad \text{ for } i = 1, 2
\]
for some radius $r_n$ depending on $n$ w.h.p.

When $d = 1$, $X_n \sim G(n, \lambda/n)$ and it suffices to show that each of the components reaches the stage in which its boundary contains $\omega(n^{1/2})$ vertices w.h.p., conditional on local survival. The formation of neighborhoods up to such a stage is still governed by the probability law of the breadth-first exploration of local neighborhoods. So to speak, the formation of the giant component is ``almost local'' in Erd\H{o}s-Rényi random graphs, which was dubbed by van der Hofstad~\cite{Hofstad:2021}.

On the other hand, for $d \geq 2$, the situation changes dramatically. For instance, when $X_n \sim \LM_d(n, \lambda/n)$, we need to show that $s_{d-1} \lp \partial B(\sigma_i; r_n) \rp \geq \omega(n^{d/2})$ for $i = 1,2$ for some radius $r_n$ w.h.p, conditional on local survival. As analyzed in Section~\ref{subsec: breadth-first exploration-stochastic properties}, the explorations of local neighborhoods is governed by forward simplices. However, as appeared in their probability law (Corollary~\ref{cor: probability of exploration processes}), their contribution is no longer significant once the exploration reaches the stage where a considerable number of vertices are included in the explored part. This reveals that in the formation of the giant $d$-dimensional component for $d \geq 2$, the exploration of backward and sibling simplices play a significant role, whose contribution was not meaningful to the formation of local neighborhoods. In this regard, we might say that the higher-dimensional giant component in random simplicial complexes is not almost local.

The probability laws of backward and sibling simplices at each step $k$ of the breadth-first exploration highly depend on how dense the explored part $\mathcal{X}_{k-1}$ is, which can be described in terms of degrees of simplices (Definition~\ref{def: degree}) in $\mathcal{X}_{k-1}$. As appeared in Lemma~\ref{lemma: backward estimate}, it is necessary to analyze the denseness of $\mathcal{X}_{k-1}$ for all dimensions lower than $d$. Such lower dimensional connections gets denser as the top dimension $d$ gets higher and the parameter $\lambda$ gets larger, which makes the problem much more challenging for large $d$ and $\lambda$ much larger than the critical value $1/d$. In fact, if $\lambda$ is very close to $1/d$, the denseness of the explored part is still rather tamed. For instance, if $\lambda = (1 + \e_n) /d$ with $\e_n \to 0$ as $n \to \infty$, which we call the barely supercritical regime, the degrees do not get overly large. This was the key ingredient of \cite{Cooley;Kang;Koch:2018} to obtain the concentration of the giant component in uniform hypergraphs in the barely supercritical regime, which can be translated to our problem for $X_n \sim \LM_d(n, \lambda/n)$ in the barely supercritical regime. However, beyond the barely supercritical regime, in particular for $\lambda > 1$, the lower-dimensional connections become so much denser that the known techniques are no longer applicable, even in $d = 2$. 

In this paper, we only focus on $d = 2$. As stated in Theorem~\ref{thm: supercritical MRSC}, we would like to remark that when $X_n \sim \LM_2(n, \lambda/n)$ we obtain the uniqueness and the concentration of the size of the $2$-dimensional giant component in the full supercritical regime $\lambda > 1/d$. When $X_n \sim \MRSC_2(n; \bm{p})$ which is not $\LM_2(n, \lambda/n)$, i.e., $p_1 = \Theta(n^{-\alpha_1})$ with $0 < \alpha_1 \leq 1/2$, large $\lambda$ makes the lower-dimensional structure more intricate and relatively denser compared with the order of the total number of edges $n^{2 - \alpha_1}$, which is smaller than that in $\LM_2(n, \lambda/n)$. Due to such an intrinsic challenge, our consideration does not cover the full supercritical regime. However, we would like to remark that the upper bound of $\lambda$ below which our result holds is much larger than $1$. Our primary goal in this section is to prove the following:

\begin{prop}
\label{prop: MRSC satisfies the side condition}
Assume that $d = 2$ with $p_1$ and $p_2$ parametrized by $p_1 = \Theta(n^{-\alpha_1})$ and $p_2 = \Theta(n^{-\alpha_2})$ satisfying $2\alpha_1 + \alpha_2 = 1$. Assume further that $0 \leq \alpha_1 < \frac{2 \log 2}{\log \lambda + 2 \log 2}$. Then, $X_n \sim \MRSC_2(n; p_1, p_2)$ satisfies the condition (\ref{eq: 4.1.1}) (equivalently, the condition (\ref{eq: 2.2.3}). More precisely,
\[ \label{eq: 4.1.3} \tag{4.1.3}
\lim_{r \to \infty} \limsup_{n \to \infty} \Pr \lp \sigma_1 \centernot\longleftrightarrow \sigma_2,  s_{1} \lp \partial B_{X_n} (\sigma_1; r) \rp,  s_{1} \lp \partial B_{X_n}(\sigma_2; r) \rp \geq r \rp = 0.
\]
\end{prop}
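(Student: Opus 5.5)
Our plan is a two-round exposure argument. We run the breadth-first exploration of Section~\ref{subsec: breadth-first exploration} from $\sigma_1$ and $\sigma_2$, and show that, on the local survival event $\mathcal{L}_n^{(1)}(r)\cap\mathcal{L}_n^{(2)}(r)$, each exploration w.h.p.\ reaches a stage where its active boundary is large enough. A sprinkling/second-moment count of the cheapest connecting configurations then joins the two explored sets. The count is of the order $p_1^3p_2^2=\Theta(n^{-3\alpha_1-2\alpha_2})$ described before \eqref{eq: 4.1.2}.

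\textbf{Phase 1 (forward growth).} While $s_0(\mathcal{X}_k)\le \e n$ and the lower-dimensional degrees in $\mathcal{X}_{k-1}$ are small, Corollary~\ref{cor: probability of exploration processes} gives $F_k+B_k\geqst 2\,\mathsf{Bin}(n(1-O(\e)),(1-O(\e))\lambda/n)$. Since $\lambda>1/2$, we couple with a supercritical branching process with offspring $2\mathsf{Poi}(\lambda(1-O(\e)))$. On local survival with $r$ large, it grows exponentially, and the active set reaches size $\e n$ w.h.p.\ (standard Chernoff/martingale bounds).

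\textbf{Phase 2 (backward growth).} Once $\Theta(n)$ vertices are discovered, backward triangles dominate. A backward triangle on $\sigma_k=\{a,b\}$ uses an explored vertex $w$ for which $\{a,w\},\{b,w\}$ are not yet in $\mathcal{X}_{k-1}$. By Lemma~\ref{lemma: backward estimate}, each such triangle appears with probability $p_k^B\approx(1-\deg^{\mathcal{X}_{k-1}}_{0,1}(a)p_{2,1}-\deg^{\mathcal{X}_{k-1}}_{0,1}(b)p_{2,1})\lambda/n$, where $p_{2,1}\asymp p_1 p_2\asymp\lambda n^{-1+\alpha_1}$. So the active set keeps growing by a factor $\approx 2\lambda\, s_0(\mathcal{X}_{k-1})/n$ per generation, as long as the vertex degrees in the explored edge set stay $o(n^{1-\alpha_1})$. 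The target is to push $s_1$ of the active boundary to $\omega(n^{(3\alpha_1+2\alpha_2)/2})=\omega(n^{1-\alpha_1/2})$, which exceeds $n$ once $\alpha_1<1$. This is exactly why forward exploration alone is insufficient. With this boundary size on both sides, we expose the remaining (unexplored) edges and triangles. The expected number of disjoint length-$2$ connecting paths between the two boundaries is then $\gg1$, and Janson's inequality shows one exists w.h.p. Alternatively, before this we check that the two explorations intersect, which also gives connection.

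\textbf{Main obstacle.} The hard step is controlling the maximum vertex degree $\max_v\deg^{\mathcal{X}_k}_{0,1}(v)$ during Phase 2. Edges accumulate at explored vertices, and each explored vertex's degree can itself grow like a branching process. Its mean per generation is roughly $\lambda$ split over two endpoints, with an effective doubling factor of $2$ per generation. Over the $\approx\log_2(\cdot)$ generations needed, it reaches about $n^{\alpha_1\log\lambda/(2\log2)}$-type growth relative to the scale $n^{1-\alpha_1}$ at which $p_k^B$ degenerates.

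I would first bound the degree of a fixed vertex by a dominating Galton--Watson/Binomial process, with a union bound over $n$ vertices. I would then require the degree bound to stay $o(n^{1-\alpha_1})$ up to the generation where the boundary reaches $n^{1-\alpha_1/2}$. Solving this inequality should give precisely the condition $\alpha_1<\frac{2\log2}{\log\lambda+2\log2}$, equivalently $\lambda<2^{2(1-\alpha_1)/\alpha_1}$. For $\alpha_1=0$ (Linial--Meshulam), the sibling/backward corrections vanish by the independence in Lemma~\ref{lemma: backward estimate}, so no restriction arises.
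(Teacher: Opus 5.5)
Your plan rests on a miscomputed target scale. Since $\alpha_1\ge 0$, we have $n^{1-\alpha_1/2}\le n$, with equality only in the Linial--Meshulam case. So the required boundary does not ``exceed $n$'', and for $\alpha_1>0$ the backward-dominated Phase 2 is not needed.

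Worse, your phase structure clashes with the degree control that you rightly identify as the crux. After $G$ breadth-first generations, the strong descendants at a vertex number about $\lambda^{G}=(2\lambda)^{G}/2^{G}$.
\begin{itemize}
\item If you stop when $(2\lambda)^{G}\approx n^{1-\alpha_1/2}$, the requirement $\lambda^{G}=o(n^{1-\alpha_1})$ is exactly $\alpha_1<\frac{2\log 2}{\log\lambda+2\log 2}$, the stated hypothesis.
\item If you run Phase 1 until $\e n$ active edges, that takes $\Theta(n)$ steps, i.e.\ $(2\lambda)^{G}\approx n$. The same computation then gives only $\lambda<2^{(1-\alpha_1)/\alpha_1}$.
\end{itemize}
So on part of the stated range, degrees reach order $n^{1-\alpha_1}$ during your Phase 1. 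The lower bounds $p_k^F,p_k^B$ of Lemmas~\ref{lemma: forward estimate} and~\ref{lemma: backward estimate} then become vacuous. Your last paragraph, which stops the degree bound at boundary size $n^{1-\alpha_1/2}$, makes the right choice, but it contradicts the phases you set up.

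Phase 2 as written also fails to grow. Backward triangles alone give mean offspring $\approx 2\lambda s_0(\mathcal{X}_{k-1})/n$. At the handoff this is $2\lambda\e<1$ whenever $\e$ is as small as Phase 1 demands for $\lambda$ near $1/2$.

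The right tool is \eqref{eq: 3.2.29}. Forward and backward triangles together have $n-O(\max_v\deg^{\mathcal{X}_{k-1}}_{0,1}(v))$ candidate third vertices. Hence $F_k+B_k\geqst 2\,\mathsf{Bin}(n-o(n),(1-o(1))\lambda/n)$ however many vertices are already explored, and no separate phase is needed. This also covers $\alpha_1=0$, where one side needs more than $n$ active edges. Even there the degree bound $o(n)$ is still needed to keep the candidate count at $n-o(n)$; it simply holds for every $\lambda$.

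The argument closes if you stop at $\Theta(n^{1-\alpha_1/2})$ exploration steps throughout.
\begin{enumerate}
\item Show $\max_v\deg^{\mathcal{X}_{k-1}}_{0,1}(v)=o(n^{1-\alpha_1})$ for $k\le Tn^{1-\alpha_1/2}$. Use Azuma for increments coming from explored edges not containing $v$. For the remaining increments, embed a $\mathsf{Bin}(n,\lambda/n)$ strong-descendant tree in a $2\mathsf{Bin}(n,\lambda/n)$ tree run for $\log_{2\lambda}(n^{1-\alpha_1/2}\log n)$ generations, then apply high moments and a union bound over $v$.
\item Use a gambler's-ruin martingale $q_n^{Z_k}$ to pass from local survival at radius $r$ to survival up to step $\delta n^{1-\alpha_1/2}$. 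The failure probability tends to $0$ as $r\to\infty$.
\item Use a drift/martingale decomposition to obtain $yn^{1-\alpha_1/2}$ active edges.
\item Decouple the two explorations by running the second on vertices outside the first. This forces an asymmetric choice: take $y_1$ small, so the first uses at most $\delta n$ vertices with $2\lambda(1-\delta)>1$, and take $y_2$ large.
\item Apply a second-moment count over pairs in $\mathcal{A}^{(1)}\times\mathcal{A}^{(2)}$ joined by two adjacent triangles. Its mean is $\asymp y_1y_2\lambda^2$, giving failure probability at most $\e$. That suffices because of the $\lim_{r\to\infty}\limsup_{n\to\infty}$ in \eqref{eq: 4.1.3}.
\end{enumerate}
So $\omega(\cdot)$-size boundaries are unnecessary. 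At $\alpha_1=0$, demanding $\omega(n)$ active edges on both sides would make the first exploration absorb $n(1-o(1))$ vertices, which is incompatible with this decoupling.
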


\noindent For each $r \geq 0$ and $b_0^{(1)}, b_0^{(2)} \geq r$ and $w_0^{(1)}, w_0^{(2)} \in \N$, define the event $\mathcal{L}_n(r; \bm{b}_0, \bm{w}_0)$ by 
\[ \label{eq: 4.1.4} \tag{4.1.4}
\mathcal{L}_n(r; \bm{b}_0, \bm{w}_0) := \left\{ s_{1} \lp B_{X_n}(\sigma_1; r-1) \rp = w_0^{(i)}, s_{1} \lp \partial B_{X_n}(\sigma_i; r) \rp = b_0^{(i)}, i = 1, 2 \right\}
\]
Then, the probability term in the left-hand side of (\ref{eq: 4.1.3}) can be written as
\[ \label{eq: 4.1.5} \tag{4.1.5}
\sum_{b_0^{(1)}, b_0^{(2)} \in \N, w_0^{(1)}, w_0^{(2)} \geq r} \Pr \lp \sigma_1 \centernot\longleftrightarrow \sigma_2 \: | \: \mathcal{L}_n(r; \bm{b}_0, \bm{w}_0) \rp \Pr \lp \mathcal{L}_n (r; \bm{b}_0, \bm{w}_0) \rp.
\]
Thus it suffices to prove the following: for a given $\e > 0$, 
\[ \label{eq: 4.1.6} \tag{4.1.6}
\Pr \lp \sigma_1 \centernot\longleftrightarrow \sigma_2 \: | \: \mathcal{L}_n(r; \bm{b}_0, \bm{w}_0) \rp \leq \e
\]
for all $b_0^{(1)}, b_0^{(2)} \in \N$ and $w_0^{(1)}, w_0^{(2)} \geq r$.

To provide the much clearer picture of the formal steps of the proof, we would like to connect the notion of boundaries to the breadth-first exploration process. Let $\sigma \in S_{d-1}(X_n)$ and suppose that we consider the breadth-first exploration $(\mathcal{X}_k)_{k \geq 0}$ of the connected component $\mathcal{C}(\sigma)$. On the other hand, let us consider the neighborhoods $\lp B_{X_n}(\sigma; r) \rp_{r \geq 0}$ of $\sigma$. By the definition of the breadth-first exploration (Section~\ref{subsec: breadth-first exploration}), for every $r$, we start explore $(d-1)$-simplices in $\partial B_{X_n}(\sigma; r)$ once we have explored all $(d-1)$-simplices in $B_{X_n}(\sigma; r-1)$. In other words, at Step $k = s_{d-1}(B_{X_n}(\sigma; r-1))$ of the exploration, the number of active $(d-1)$-simplices is equal to $s_{d-1} \lp \partial B_{X_n}(\sigma; r) \rp$. Therefore, showing that the exploration can reach a stage with large boundary is equivalent to showing that it can reach a stage with many active $(d-1)$-simplices. We will show that to prove Proposition~\ref{prop: MRSC satisfies the side condition} (i), it suffices to show that each of the explorations starting at $\sigma_1$ and $\sigma_2$ reach the stage with $T n^{1 - \alpha_1/2}$ active edges w.h.p., conditional on $\mathcal{L}_n(r; \bm{b}_0, \bm{w}_0)$ with large enough constant $T$ that depends on $\e$. 

The proof of \eqref{eq: 4.1.6}, which proves Proposition~\ref{prop: MRSC satisfies the side condition}, is broken down into three steps. First, we prove that under the assumed conditions in Proposition~\ref{prop: MRSC satisfies the side condition}, the degree $\deg_{0, 1}^{\mathcal{X}_k} (v)$ does not grow too fast in $n$ for every $v \in \mathcal{X}_k$ up to large enough exploration step $k$ as $n \to \infty$. This will allow us to approximate the distribution on the right-hand side of (\ref{eq: 3.2.29}) for $d = 2$ by $2 \mathsf{Bin}(n, \lambda/n)$ until the stage in which we obtain $T n^{1 - \alpha_1/2}$ active edges with constant $T$ chosen appropriately. Next step is to prove that conditional on large local survival of both $\sigma_1$ and $\sigma_2$, both neighborhoods can actually reach the stage we obtain many enough active edges. The final step is to show that the two large neighborhoods eventually connect to each other w.h.p.

First, we control the order of the vertex-edge degree through the breadth-first exploration of a $2$-dimensional connected component. 

\begin{lemma}
\label{lemma: upper bounds for degrees d =2}
Let $\sigma_0$ be an arbitrary edge in $X_n \sim \MRSC_d(n; p_1, p_2)$ satisfying the conditions in Proposition~\ref{prop: MRSC satisfies the side condition} (i). Let $(\mathcal{X}_k)_{k \geq 1}$ be the simplicial complexes obtained by the breadth-first exploration starting at $\sigma$. Then, for any fixed constant $T > 0$, as long as $k \leq T n^{1- \alpha_1/2}$, it holds that
\[ \label{eq: 4.1.7} \tag{4.1.7}
\max_{v \in S_0(\mathcal{X}_{k-1})} \deg_{0, 1}^{\mathcal{X}_{k-1}} (v) = o(n^{1 - \alpha_1}) \quad w.h.p.
\]
as $n \to \infty$.
\end{lemma}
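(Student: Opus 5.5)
The plan is to control $\deg_{0,1}^{\mathcal{X}_{k-1}}(v)$ by comparing two growth rates. The explored complex grows by a factor of roughly $2\lambda$ per generation, while the edges at a fixed vertex grow by only about $\lambda$ per generation. The hypothesis $\alpha_1<\frac{2\log 2}{\log\lambda+2\log 2}$ should be exactly the condition under which the second rate, run for as many generations as the exploration needs to reach $Tn^{1-\alpha_1/2}$ steps, stays below $n^{1-\alpha_1}$.

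\textbf{Step 1: decomposition.} Fix a vertex $v$. Every edge containing $v$ in $\mathcal{X}_{k-1}$ enters through a triangle $\sigma_j\cup\{w\}$ detected at some step $j<k$ with $v\in\sigma_j\cup\{w\}$. There are two kinds.
\begin{itemize}
\item \emph{Type (a), $v=w$ (apex contributions).} Here $\sigma_j\not\ni v$, and the triangle adds at most two edges at $v$.
\item \emph{Type (b), $v\in\sigma_j$ (edge contributions).} Here $\sigma_j=\{v,x\}$, and each triangle $\{v,x,w\}$ adds exactly one new edge $\{v,w\}$ at $v$.
\end{itemize}
Hence $\deg_{0,1}^{\mathcal{X}_{k-1}}(v)\le 2A_v+D_v$. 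Here $A_v$ counts type (a) triangles. $D_v$ is the total progeny, inside the exploration, of the edges at $v$: an edge at $v$ explored in generation $g$ produces edges at $v$ in generation $g+1$. By \eqref{eq: 3.2.30} and the same conditioning arguments as in Lemmas~\ref{lemma: forward estimate}--\ref{lemma: backward estimate}, each explored edge containing $v$ spawns a number of new edges at $v$ that is stochastically dominated by $\mathsf{Bin}(n,\lambda/n)$, conditionally on the past. So $D_v$ is dominated by a Galton--Watson-type process with mean $\lambda$ per generation.

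\textbf{Step 2: generation count.} Since each explored edge has at most $2\mathsf{Bin}(n,\lambda/n)$ children, the explored component up to radius $R$ has size of order $(2\lambda)^R$. I will show that w.h.p.\ the step $k=Tn^{1-\alpha_1/2}$ is reached only after radius
\[
R_n\ge (1-\alpha_1/2)\frac{\log n}{\log(2\lambda)}-O(1),
\]
and that $R_n\le R_n^+:=(1+\eta)(1-\alpha_1/2)\log n/\log(2\lambda)$ for any small $\eta>0$. The upper bound on the radius comes from the lower bound \eqref{eq: 3.2.29}, which yields growth $\asymp(2\lambda)^g$ as long as degrees remain small. This requires a bootstrap/stopping-time argument together with Step 3. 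The edges at $v$ within radius $R_n^+$ are then bounded by a mean-$\lambda$ process run for $R_n^+$ generations, started from at most the number of type (a) hits plus possibly the root. By Markov's inequality and a union bound over the $n$ vertices, I aim for
\[
\max_v D_v\le n^{o(1)}\,\lambda^{R_n^+}\,(1+\max_v A_v) \quad\text{w.h.p.},
\]
using exponential tail bounds for the total progeny of a mean-$\lambda$ process started from each seed edge. Now
\[
\lambda^{R_n^+}=n^{(1+\eta)(1-\alpha_1/2)\log\lambda/\log(2\lambda)},
\]
and $(1-\alpha_1/2)\log\lambda<(1-\alpha_1)\log(2\lambda)$ is equivalent to $\alpha_1\log\lambda<2(1-\alpha_1)\log2$, which is precisely the hypothesis. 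Choosing $\eta$ small therefore gives $o(n^{1-\alpha_1})$ with a polynomial margin, and this margin absorbs the $n^{o(1)}$ factors.

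\textbf{Step 3: apex term (main obstacle).} For type (a), an explored edge $\sigma_j=\{x,y\}\not\ni v$ forms a triangle with $v$ only if $\{v,x\},\{v,y\}$ are edges and the triangle is present. Unconditionally this has probability $r_2\approx\lambda/n$, but once $v$'s edges are partially revealed the conditional probability is $p_2$ or $p_1p_2$, which is much larger. I would bound $A_v$ by counting explored edges with both endpoints in $N_{X_n}(v)$. Using the disjointness of the revealed information, $A_v$ should be dominated by $\mathsf{Bin}(k,\,C\lambda/n)$ plus a correction of order $p_2\cdot\#\{\text{explored edges inside } N^{\mathcal{X}}(v)\}$. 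This correction is itself controlled by $D_v$ times the maximum degree, which leads to a self-consistent inequality closed by a stopping time $\tau=\inf\{k:\max_v\deg\ge n^{1-\alpha_1-\delta}\}$. Before $\tau$ the bound $kC\lambda/n=O(n^{-\alpha_1/2})$ with a Chernoff/union bound gives $\max_vA_v=O(\log n)$ w.h.p., and then Step 2 shows $\tau>Tn^{1-\alpha_1/2}$ w.h.p. I expect this coupling between correlated lower-dimensional structure and the stopping time to be the delicate part. In the Linial--Meshulam case $\alpha_1=0$ the correction is trivial, since all edges are present and $D_v,A_v\le n$ always suffices.
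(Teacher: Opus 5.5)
Your decomposition and growth-rate comparison are essentially the paper's. You split the degree into apex triangles versus triangles built on edges through $v$ (the paper's Scenarios 1 and 2), embed a mean-$\lambda$ process of edges at $v$ in the mean-$2\lambda$ exploration, and observe that $\lambda^{R_n}=o(n^{1-\alpha_1})$ at $R_n\approx(1-\alpha_1/2)\log_{2\lambda}n$ is exactly the hypothesis. Your remark that bounding the radius needs a \emph{lower} growth bound from \eqref{eq: 3.2.29} and a bootstrap is a point the paper passes over.

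The genuine gap is Step 3. The case that drives the apex term is an explored edge $\{x,y\}$ with \emph{exactly one} of $\{v,x\},\{v,y\}$ already in $\mathcal{X}_{j-1}$. Then the triangle has conditional probability of order $p_1p_2=\Theta(n^{-1+\alpha_1})$ and adds one edge at $v$. Your correction term instead concerns explored edges with both endpoints in $N^{\mathcal{X}}(v)$, at rate $p_2$, but such triangles add no new edge at $v$ at all. With only the stopping-time information $\max\deg\le n^{1-\alpha_1-\delta}$, the number of explored edges meeting $N^{\mathcal{X}}(v)$ is bounded by $\min\{k,\deg(v)\max\deg\}$. So what your argument actually yields is $A_v=O(p_1p_2k)=O(n^{\alpha_1/2})$, not $O(\log n)$, once $\alpha_1>0$; for $\alpha_1=0$ all cases have probability $\lambda/n$ and your bound is fine. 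Because your Step 2 bound is multiplicative, $\max_v D_v\le n^{o(1)}\lambda^{R_n^+}(1+\max_vA_v)$, this would require $\alpha_1/2+(1-\alpha_1/2)\log\lambda/\log(2\lambda)<1-\alpha_1$. That is strictly stronger than the hypothesis, so the argument does not close.

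The paper handles the apex term crudely and independently of the state. At every step the increment at $v$ from a triangle with apex $v$ is dominated by $2\,\mathsf{Ber}(p_1p_2)$. Azuma--Hoeffding with a union bound over $v$ then gives $O(n^{\alpha_1/2})+n^{1-\alpha_1-\delta}=o(n^{1-\alpha_1})$, with no self-consistent inequality.

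To combine this with Step 2 you need an \emph{additive} rather than multiplicative bound. A seed created while exploring generation $g$ only grows for $R-g$ generations, and $\lesssim(2\lambda)^g$ edges are explored in generation $g$. Hence $\E[D_v]\lesssim\lambda^{R}+p_1p_2\lambda^{R}\sum_{g\le R}2^g\lesssim\lambda^R+p_1p_2(2\lambda)^R$. At $R=R_n^+$ this is $o(n^{1-\alpha_1})$ with a polynomial margin, since $p_1p_2(2\lambda)^{R_n^+}=n^{\alpha_1/2+\eta(1-\alpha_1/2)}$ and $\alpha_1\le 1/2$. Higher moments then handle the union over $v$.

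The paper's write-up bounds a single surviving root tree and does not carry out this summation over the roots in $\mathcal{K}$ either. So this step is needed in any version of the argument.
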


\begin{proof}
Let $v \in [n]$ be an arbitrary vertex. At each step $k = 0, \dots, T n^{1- \alpha_1/2}$, the degree can increase in one of the following two scenarios: 1) the edge $\sigma_k$ that is explored at Step $k$ does not contain $v$ as its vertex or 2) $\sigma_k$ contains $v$ as its vertex. For each $k$, let $D_k(v)$ denote the increment $\deg_{0,1}^{\mathcal{X}_k}(v) - \deg_{0, 1}^{\mathcal{X}_{k-1}}(v)$. Then we can write $D_k(v) = D_k^{(1)}(v) + D_k^{(2)}(v)$, where $D_k^{(1)}(v)$ and $D_k^{(2)}(v)$ are the increments made by scenario $1$ and $2$, respectively. Note that for each $k$, only one of $D_k^{(1)}(v)$ and $D_k^{(2)}(v)$ is non-zero.

In the first scenario, given $\mathcal{X}_{k-1}$, the degree of $v$ increases by $1$ if $v$ is incident to exactly one of the vertices of $\sigma_k$ by an edge in $\mathcal{X}_{k-1}$ and the triangle $\{v\} \cup \sigma_k$ appears at Step $k$ with probability $p_1 p_2$. It increases by $2$ if $v$ is not incident to any of the vertices of $\sigma_k$ by edges in $\mathcal{X}_{k-1}$ and the  triangle $\{v\} \cup \sigma_k$ appears at Step $k$ with probability $p_1^2 p_2$, conditional on $\mathcal{X}_{k-1}$. Thus, we can define a sequence of  random variables $(W_k(v))_{k \geq 1}$ satisfying the following properties: $D_k^{(1)}(v) \leq W_k(v)$ for all $k \geq 0$ and $W_k(v) | \mathcal{X}_{k-1} \sim 2 \mathsf{Ber}(p_1 p_2)$ (Here $W_0(v) := 0$). The sequence of random variables $(W_k(v) - 2 p_1p_2)_{k \geq 0}$ defines a bounded martingale difference with respect to the filtration generated by $(\mathcal{X}_k)_{k \geq 0}$. By the Azuma-Hoeffding inequality (e.g., \cite[Theorem 3.2.1]{Roch:2023}), we have
\[ \label{eq: 4.1.8} \tag{4.1.8}
\Pr \lp \sum_{k=0}^{\rfloor T n^{1-\alpha_1/2} \rfloor} W_k(v) - 2(\lfloor T n^{1-\alpha_1/2} \rfloor +1)p_1 p_2  \geq t \rp \leq \exp \lp - \frac{2 t^2}{C\lfloor T n^{1-\alpha_1/2} \rfloor } \rp
\]
for a constant $C > 0$. Take $t = n^{1- \alpha_1 - \delta}$ with small enough $\delta > 0$ such that $1 - \frac{3}{2} \alpha_1 - 2 \delta > 0$. Since $W_k(v) \geq D_k^{(1)}(v)$ for all $k$, we obtain
\[ \label{eq: 4.1.9} \tag{4.1.9}
\begin{aligned}
& \Pr \lp \max_{v \in [n]} \sum_{k=0}^{T n^{1-\alpha_1/2}} D_k^{(1)}(v) \geq 2 \lp \lfloor T n^{1 - \alpha_1/2} \rfloor + 1 \rp p_1 p_2 + n^{1-\alpha_1 - \delta} \rp \\
& \qquad \qquad \qquad \qquad  \leq \sum_{v \in [n]} \Pr \lp \sum_{k=0}^{T n^{1-\alpha_1/2}} D_k^{(1)}(v) \geq 2 \lp \lfloor T n^{1 - \alpha_1/2} \rfloor + 1 \rp p_1 p_2 + n^{1-\alpha_1 - \delta} \rp \\
& \qquad \qquad \qquad \qquad  \leq \sum_{v \in [n]} \Pr \lp \sum_{k=0}^{T n^{1-\alpha_1/2}} W_k(v) \geq 2 \lp \lfloor T n^{1 - \alpha_1/2} \rfloor + 1 \rp p_1 p_2 + n^{1-\alpha_1 - \delta} \rp.
\end{aligned}
\]
Here, the first inequality is obtained by taking the union bound. By (\ref{eq: 4.1.8}), the last expression of (\ref{eq: 4.1.9}) is upper bounded by
\[ \label{eq: 4.1.10} \tag{4.1.10}
n \exp \lp - \frac{2 n^{2 - 2\alpha_1 - 2 \delta}}{C \lfloor T n^{1 - \alpha_1/2} \rfloor} \rp  \leq C_1 n \exp \lp - C_2 n^{1-\frac{3}{2} \alpha_1 - 2 \delta} \rp
\]
for some constants $C_1, C_2 > 0$. Because of our choice of $\delta$, the right-hand side of the above tends to $0$ as $n \to \infty$. Recall that $p_1 p_2 = \Theta (n^{-\alpha_1 - \alpha_2})$. This concludes that the contribution from the scenario 1 is of order $O(n^{\alpha_1/2}) + O(n^{1 - \alpha_1 - \delta}) = o(n)$ w.h.p. as $n \to \infty$. 

Before going into the detailed proof for the second scenario, we would like to recall that the sibling edges and triangles cannot have significant contribution in the exploration process as long as $\deg_{0, 1}^{\mathcal{X}_k} (w) = o(n^{1 - \alpha_1})$, where $w$ is a vertex contained in the edge being explored at Step $k$. This means that until the forward and backward explorations carry the exploration to the step where the vertex-edge degree is of order $n^{ 1 - \alpha_1}$, the sibling exploration is negligible. Thus, it suffices to prove (\ref{eq: 4.1.7}) for the contribution of forward and backward explorations. 

For every edge $\sigma \in \mathcal{X}_k$ that is incident to $v$ and obtained by Scenario 2, we say that $\sigma$ is a \textit{strong descendant} of a \textit{root Scenario 2 edges} if the following holds.  There is a finite sequence of edges $\sigma_0, \dots, \sigma_m = \sigma$ in $\mathcal{X}_k$ such that (i) $\sigma_j$ is obtained by exploring $\sigma_{j-1}$ for each $j = 1, \dots, m$ (ii) all $\sigma_j$, $j = 0, \dots, m-1$ are incident to $v$, i.e., $\sigma_1, \dots, \sigma_{m-1}$ are obtained by Scenario 2. (iii) $\sigma_0$, which we call a root Scenario 2 edge, is obtained by either Scenario 1 or as a starting point of the exploration process. Note that every such edge $\sigma$ is a strong descendant of exactly one of root Scenario 2 edges. Let $\mathcal{K} = \{\sigma^{(2)}_1, \dots, \sigma^{(2)}_m \}$ denotes the set of roots Scenario 2 edges. Additionally, we say that an edge $\sigma' \in \mathcal{X}_k$ (not necessarily incident to $v$) is a \textit{descendant} of a root Scenario 2 edge $\sigma_0$ if there is a finite sequence of edges $\sigma_1, \dots, \sigma_m = \sigma'$ such that $\sigma_j$ is obtained by exploring $\sigma_{j-1}$ for $j = 1, \dots, m-1$. Note that all $\sigma_j$, $j = 1, \dots, m$ are not necessarily incident to $v$. 

For each $j = 1, \dots, m$, let $\mathcal{T}_j'$ denote the set of descendants of $\sigma^{(2)}_j$ and $\mathcal{T}_j$ denote the set of strong descendants of $\sigma^{(2)}_j$; it is obvious that $\mathcal{T}_j \subseteq \mathcal{T}_j'$. Recall the (estimated) probability law of the sum of forward and backward explorations presented in Coroallary~\ref{cor: probability of exploration processes}, which is stochastically bounded by $2 \mathsf{Bin}(n, \lambda/n + o(1))$. Additionally, note that every forward or backward triangle obtained in Scenario 2 increases the degree by $1$. Therefore, the probability laws of $\mathcal{T}_1', \dots, \mathcal{T}_m'$ are stochastically bounded by those of independent binomial branching processes with common offspring distribution $2 \mathsf{Bin}(n, \lambda/n)$. Moreover, the probability law of each $\mathcal{T}_j$ is stochastically bounded by that of a binomial branching process with offspring distribution $\mathsf{Bin}(n, \lambda/n)$ embedded in the upper bounding branching process of $\mathcal{T}_j'$. Let $\tilde{\mathcal{T}}_j$ and $\tilde{\mathcal{T}}_j'$ denote those upper bounding branching process for each $j$. 

Let us consider the case $\lambda < 1$ first. In this case, all $\tilde{\mathcal{T}}_j'$ are subcritical branching processes with common offspring distribution $\mathsf{Bin}(n, \lambda/n)$. Since the cardinality of the set $\mathcal{K}$ is bounded by the contribution of Scenario 1 to the degree, we have
\[ \label{eq: 4.1.11} \tag{4.1.11}
\sum |\tilde{\mathcal{T}}_j'| = O \lp |\mathcal{K}| \rp = o \lp n^{1 - \alpha_1/2} \rp \quad w.h.p.
\]

When $\lambda \geq 1$, each $\tilde{\mathcal{T}}_j$ is a supercritical branching process and $\tilde{\mathcal{T}}_j'$ may be a supercritical branching process as well. If all $\tilde{\mathcal{T}}_j$ do not survive, the situation boils down to the case considered in the previous paragraph. Thus, it suffices to consider the situation at least one of them survives. Without loss of generality, let us suppose that $\tilde{\mathcal{T}}_1'$ survives, and denote the event of survival by $\mathcal{S}^{(n)}$. For each $N \geq 0$, let $Y_N^{(n)}$ and $Z_N^{(n)}$ denote the number of progeny in the $N$th generation of $\tilde{\mathcal{T}}_j'$ and $\tilde{\mathcal{T}}_j$, respectively. Additionally, for $R \geq 0$, let $S_R^{(n)} := \sum_{N=0}^R Y_N^{(n)}$ and $W_R^{(n)} := \sum_{N=0}^R Z_N^{(n)}$. Then, it suffices to prove the following. There exists a sequence $(R_n)_{n \geq 1}$ and $\beta \in (0, 1 - \alpha_1)$ such that
\[ \label{eq: 4.1.12} \tag{4.1.12}
\Pr \lp S_{R_n}^{(n)} \geq T n^{1 - \alpha_1/2} \: | \: \mathcal{S}^{(n)} \rp \to 1 
\]
and
\[ \label{eq: 4.1.13} \tag{4.1.13}
\Pr \lp W_{R_n}^{(n)} \geq n^{\beta} \: | \: \mathcal{S}^{(n)} \rp \to 0
\]
as $n \to \infty$. In words, this means that we can find a radius $R_n$ such that, conditional on survival of $\tilde{\mathcal{T}}_j'$, the degree $\deg_{0, 1}^{\mathcal{X}_k} (v)$ is of order $o(n^{1 - \alpha_1})$ until we detect at least $T n^{1 - \alpha_1/2}$ edges throughout the exploration. 

Intuitively, the sequence $(2 \lambda)^{-R} S_{R}^{(n)}$ concentrates around a positive constant as $R \to \infty$ by the martingale convergence theorem (\cite[Theorem 3.9]{Hofstad:2017}) and the Kesten-Stigum theorem (\cite[Theorem 3.10]{Hofstad:2017}). This tells us that $R_n = \log_{2 \lambda} C n^{1 - \alpha_1/2}$ for some constant $C > 0$ is a promising choice satisfying (\ref{eq: 4.1.12}). Moreover, following the same intuitive logic, $\lambda^{-R} W_R^{(n)}$ concentrates around a positive constant, conditional on $\mathcal{S}^{(n)}$, which tells us that $W_{R_n}^{(n)}$ is more or less $\lambda^{R_n} = o(n^{1 - \alpha_1})$. In fact, it will turn out that this $R_n$ is a correct choice (up to logarithmic adjustment). However, there is technical subtlety that prevents us from utilizing this intuitive logic directly. The limiting random variable of $(2 \lambda)^{-R} S_R^{(n)}$ obtained by the martingale convergence theorem depends on each $n$; hence, we cannot obtain nice enough concentration of $(2 \lambda)^{-R_n} S_{R_n}^{(n)}$, universal for all large $n$. The remaining part of the formal proof is dedicated to deal with this technical subtlety.

Let us choose $R_n := \log_{2 \lambda} ( n^{1 - \alpha_1/2} \log n)$. Let $\mathcal{T} = (Y_N)_{N \geq 0}$ denote a branching process with offspring distribution $2 \mathsf{Poi}(\lambda)$ and let $\mathcal{S}$ denote the event that this branching process survives. Let us write $q_n := \Pr ( \mathcal{S}^{(n)})$ and $q := \Pr ( \mathcal{S})$. Then we have $q_n \to q$ as $n \to \infty$. Let $\e > 0$ be given. First, we prove the following: there exists an integer $M > 0$ such that whenever $m > M$, it holds that
\[ \label{eq: 4.1.14} \tag{4.1.14}
\Pr \lp Y_m^{(n)} \geq m \: | \: \mathcal{S}^{(n)} \rp \geq 1 - \e
\]
for all large $n$. To prove it, choose $M > 0$ such that whenever $m > M$, 
\[ \label{eq: 4.1.15} \tag{4.1.15}
\Pr \lp Y_m \geq m \: | \: \mathcal{S} \rp \geq 1 - \e.
\]
This is possible because $(2 \lambda)^{-M} Y_M$ converges a.s. to a finite and integrable positive random variable by the martingale convergence theorem (\cite[Theorem 3.9]{Hofstad:2017}) and the Kesten-Stigum theorem (\cite[Theorem 3.10]{Hofstad:2017}). On the other hand, for each fixed $m > M$, we can write
\[ \label{eq: 4.1.16} \tag{4.1.16}
\Pr \lp Y^{(n)}_m \geq m \: | \: \mathcal{S}^{(n)} \rp = \frac{\Pr \lp X^{(n)}_m \geq m, \mathcal{S}^{(n)} \rp}{\Pr \lp \mathcal{S}^{(n)} \rp}.
\]
The numerator of (\ref{eq: 4.1.16}) can be written as 
\[ \label{eq: 4.1.17} \tag{4.1.17}
\begin{aligned}
\Pr \lp Y^{(n)}_m \geq m, \mathcal{S}^{(n)} \rp & = \E \lb \E \lb \1\{ Y^{(n)}_m \geq m \} \1_{\mathcal{S}^{(n)}} \: | \: Y_m^{(n)} \rb \rb \\
& = \E \lb \1\{ Y_m^{(n)} \geq m \} \E \lb \1 \{ \vee_{i = 1}^{Y_m^{(n)}} \mathcal{S}_i \} \: | \: Y_m^{(n)} \rb \rb,
\end{aligned}
\]
where each $\mathcal{S}_i$ denotes the event that a branching process started from one of $Y_m^{(n)}$ elements in the $m$th generation survives. Conditional on $Y_m^{(n)}$, all events $\mathcal{S}_i$ are independent, so the last expression of (\ref{eq: 4.1.17}) is equal to
\[ \label{eq: 4.1.18} \tag{4.1.18}
\E \lb \1\{Y_m^{(n)} \geq m \} \lp 1 - (1-q_n)^{Y_m^{(n)}} \rp \rb.
\]
Since $m$ was fixed, $Y_m^{(n)}$ converges in distribution to $Y_m$ as $n \to \infty$. Combined with the convergence $q_n \to q$, this implies that the expression in (\ref{eq: 4.1.18}) converges to 
\[
\E \lb \1\{ Y_m \geq m \} \lp 1 - ( 1 - q)^{Y_m} \rp \rb = \Pr \lp Y_m \geq m, \mathcal{S} \rp.
\]
This further implies that 
\[
\Pr \lp Y_m^{(n)} \geq m \: | \: \mathcal{S}^{(n)} \rp \to \Pr \lp Y_m \geq m \: | \: \mathcal{S} \rp.
\]
Therefore, for every $m > M$, there exists large $N >0$ such that whenever $n > N$, $\Pr \lp Y_m^{(n)} \geq m \: | \: \mathcal{S}^{(n)} \rp \geq 1 - c \e$ for some constant $c > 0$. This proves (\ref{eq: 4.1.14}).

Next, for each fixed $m > 0$, we are going to consider the probability
\[ \label{eq: 4.1.19} \tag{4.1.19}
\Pr \lp S_{R_n}^{(n)} \geq T n^{1 - \alpha_1/2} \: | \: Y_M \geq M \rp.
\]
for large enough $M$ which will be specified later. Suppose that $\e >0$ was chosen so small that $2\lambda (1 - \e) > 1$. For each $\ell \geq 0$, let $b_{\ell} := m \lp 2 \lambda ( 1- \e) \rp^{\ell}$ and $c_{\ell} := m \lp 2 \lambda ( 1 + \e) \rp^{\ell}$. Additionally, define the event
\[ \label{eq: 4.1.20} \tag{4.1.20}
\mathcal{E}_{M, \ell} := \{ b_{\ell} \leq Y_{m + \ell} \leq c_{\ell} \}
\]
and
\[ \label{eq: 4.1.21} \tag{4.1.21}
\mathcal{E}_{M, [R]} := \bigcap_{\ell = 1}^R \mathcal{E}_{M, \ell}
\]
for each $R \in \N$. Note that we can write
\[ \label{eq: 4.1.22} \tag{4.1.22}
\begin{aligned}
\Pr \lp \mathcal{E}_{M, [R]} \: | \: Y_M \geq M \rp & = \prod_{\ell \in [R]} \Pr \lp \mathcal{E}_{M, \ell} \: | \: \mathcal{E}_{M, [\ell-1]}, Y_M \geq M \rp \\
& = \prod_{\ell \in [R]} \lp 1 - \Pr \lp \mathcal{E}_{M, \ell}^c \: | \: \mathcal{E}_{M, [\ell-1]}, Y_M \geq M \rp  \rp \\
& \geq 1 - \sum_{\ell \in [R]}  \Pr \lp \mathcal{E}_{M, \ell}^c \: | \: \mathcal{E}_{M, [\ell-1]}, Y_M \geq M \rp
\end{aligned}
\]
For every $\ell \geq 1$, $Y_{M + \ell}$ conditional on $Y_{M + \ell-1}$ follows the same distribution as $\sum_{j=1}^{Y_{M + \ell-1}} B_j$, where $B_j$ are i.i.d. random variables whose common distribution is $2 \mathsf{Bin}(n, \lambda/n)$. This gives us
\[ \label{eq: 4.1.23} \tag{4.1.23}
\E \lb Y_{M + \ell} \: | \: \mathcal{E}_{M, [\ell - 1],}, Y_M \geq M \rb = 2 \lambda Y_{M + \ell -1} \in [M (2 \lambda)^{\ell} (1 - \e)^{\ell -1}, M (2 \lambda)^{\ell} (1 + \e)^{\ell -1 } ].
\]
Hence, if $Y_{M + \ell} \not\in [b_{\ell}, c_{\ell}]$, then 
\[ \label{eq: 4.1.24} \tag{4.1.24}
\left| Y_{M + \ell} - \E \lb Y_{M + \ell} \: | \: \mathcal{E}_{M, [\ell - 1]}, Y_M \geq M \rb \right| \geq \frac{\e}{100} \E \lb [Y_{M+\ell} \: | \: \mathcal{E}_{M, [\ell -1]}, Y_M \geq M \rb.
\]
By the large deviation concentration inequality of the binomial distribution (\cite[Lemma 2.35]{Hofstad:2024}), we have
\[ \label{eq: 4.1.25} \tag{4.1.25}
\begin{aligned}
& \Pr \lp \mathcal{E}_{M, \ell}^c \: | \: \mathcal{E}_{M, [\ell-1]}, Y_M \geq M \rp \\
& \leq  \Pr \lp \left| Y_{M + \ell} - \E \lb Y_{M+\ell} \: | \: \mathcal{E}_{M, [\ell-1]}, Y_M \geq M \rb \right|  \geq \frac{\e}{100} \E \lb Y_{M, \ell} \: | \: \mathcal{E}_{M,[\ell -1]}, Y_M \geq M \rb \rp \\
& \leq 2 \exp \lp - \frac{\e^2 \E \lb Y_{M, \ell} \: | \: \mathcal{E}_{M, [\ell -1]}, Y_M \geq M \rb}{2 \times 10^4  (1 + \e/100)} \rp \\
& \leq 2 \exp \lp - \frac{M \e^2 (2 \lambda)^{\ell} (1 - \e)^{\ell -1}}{2 \times 10^4 (1 + \e/100)} \rp.
\end{aligned}
\]
Combining (\ref{eq: 4.1.25}) and (\ref{eq: 4.1.22}), we obtain
\[ \label{eq: 4.1.26} \tag{4.1.26}
\Pr \lp \mathcal{E}_{M, [R]} \: | \: Y_M \geq M \rp \geq 1 - 2 \sum_{\ell =1}^R \exp \lp - \frac{M \e^2 (2 \lambda)^{\ell} (1 - \e)^{\ell -1}}{2 \times 10^4 (1 + \e/100)} \rp.
\]
Now set $R = R_n - M = \log_{2 \lambda} (n^{1 - \alpha_1/2} \log n) - M$ for all large enough $n$ and take large enough $M$ (depending on $\e)$ so that 
\[ \label{eq: 4.1.27} \tag{4.1.27}
\sum_{\ell =1}^{R_n} \exp \lp - \frac{M \e^2 (2 \lambda)^{\ell} (1 - \e)^{\ell -1}}{2 \times 10^4 (1 + \e/100)} \rp \leq \e.
\]
On the event $\mathcal{E}_{M, [R_n - M]}$, it holds that
\[ \label{eq: 4.1.28} \tag{4.1.28}
\begin{aligned} 
S_{R_n}^{(n)} = \sum_{N = 0}^{R_n} Y_N^{(n)} \geq C \lp 2 \lambda ( 1 - \e) \rp^{R_n} & = C \lp n^{1 - \alpha_1/2} \log n \rp (1 - \e)^{\log_{2 \lambda} (n^{1- \alpha_1/2} \log n)} \\
& \geq T n^{1 - \alpha_1/2}
\end{aligned}
\]
for all large enough $n$ and small enough $\e$. Here, $C > 0$ is a constant that only depends on $\lambda$. This proves that for every $m > M$, 
\[ \label{eq: 4.1.29} \tag{4.1.29}
\Pr \lp S_{R_n}^{(n)} \geq T n^{1 - \alpha_1/2} \: | \: Y_M \geq M \rp \geq 1 - \e
\]
for all large $n$.

Finally, it remains to prove (\ref{eq: 4.1.13}). Note that for every $R > 0$ and $p \geq 1$, 
\[ \label{eq: 4.1.30} \tag{4.1.30}
\E \lb | W_R^{(n)} |^p \rb \leq C_p (\lambda^R )^p,
\]
which gives us
\[ \label{eq: 4.1.31} \tag{4.1.31}
\E \lb | W_{R_n}^{(n)}|^p \rb \leq C_p n^{(1 - \alpha_1/2) \frac{\log \lambda}{\log \lambda + \log 2}} (\log n)^{\frac{\log \lambda}{\log \lambda + \log 2}}.
\]
Under the assumption $\lambda < 2^{2(1-\alpha_1)/\alpha_1}$, we have
\[ \label{eq: 4.1.32} \tag{4.1.32}
\lp 1 - \frac{\alpha_1}{2} \rp \frac{\log \lambda}{\log \lambda + \log 2} < 1 - \alpha_1,
\]
Choose $\gamma$ so that $(1 - \alpha_1/2) \frac{\log \lambda}{\log \lambda + \log 2} < \gamma < 1 - \alpha_1$. Then,  $\E \lb |W_R^{(n)}|^p \rb = O(n^{p \gamma})$. We will specify the choice of $p \geq 1$ later. Choose $\beta$ so that $\gamma < \beta < 1 - \alpha_1$. Then, by the Chebyshev inequality, we have
\[ \label{eq: 4.1.33} \tag{4.1.33}
\Pr \lp W_{R_n}^{(n)} \geq n^{\beta} \rp \leq \frac{\E \lb |W_{R_n}^{(n)} |^p \rb}{n^{p \beta}} = O \lp n^{-p( \beta - \gamma} \rp.
\]
Choose $p$ so large that $p (\beta - \gamma) > 1$. Then, we can conclude that
\[ \label{eq: 4.1.34} \tag{4.1.34}
\Pr \lp W_{R_n}^{(n)} \geq n^{\beta} \rp \leq \frac{\E \lb |W_{R_n}^{(n)} |^p \rb}{n^{p \beta}} = O(n^{-\delta})
\]
for some $\delta > 1$. 

The only remaining part is to obtain the upper bound for the maximum over all vertices. Using the union bound, the probability that the total contribution of Scenario 2, taken maximum over all vertices $v \in [n]$ is $\geq n^{\beta}$ is bounded above by
\[ \label{eq: 4.1.35} \tag{4.1.35}
n \Pr \lp W_{R_n}^{(n)} \geq n^{\beta} \rp \leq \frac{\E \lb |W_{R_n}^{(n)} |^p \rb}{n^{p \beta}}  = n O(n^{-\delta}) = o(1).
\]
This completes the proof of the lemma.

\end{proof}

To prove (\ref{eq: 4.1.6}), we first consider the breadth-first exploration of the connected component $\mathcal{C}(\sigma_1)$ containing $\sigma_1$ under $\mathcal{L}_n(\bm{b}_0, \bm{w}_0)$ with $b_0^{(1)}, b_0^{(2)} \geq r$ (we will specify the choice of $r$ later). Let us take $\mathcal{X}_0^{(1)} := B_{X_n}(\sigma_1; r)$ and $\sigma_1^{(1)}$ to be the least element in $S_1 \lp \partial B_{X_n}(\sigma_1; r) \rp$ with respect to the linear ordering on $S_2(\Delta_n)$. In words, we start the exploration of $\mathcal{C}(\sigma_1)$ in the initial state where $B_{X_n}(\sigma_1; r-1)$ is already fully explored and $\partial B_{X_n}(\sigma_1; r)$ is active. Additionally, we explore $\mathcal{C}(\sigma_1)$ in $X_n$ avoiding all the vertices contained in $B_{X_n}(\sigma_2; r)$. Let $(\mathcal{X}_k^{(1)})_{k \geq 1}$ be the sequence of simplicial complexes obtained by this exploration and let $\mathcal{A}^{(1)}_k$ denote the set of active edges at each Step $k$. Recall that the exploration terminates when $A^{(1)}_k := |\mathcal{A}_k^{(1)}| = 0$. Let $\tau^{(1)}$ denotes the stopping time at which the exploration terminates, conditional on $\mathcal{L}_n(r; \bm{b}_0, \bm{w}_0)$, i.e.,
\[ \label{eq: 4.1.36} \tag{4.1.36}
\tau^{(1)} := \{k \geq 0 \: : \: A_k = 0 \} \quad \text{ on  } \mathcal{L}_n(r; \bm{b}_0, \bm{w}_0).
\]

As we mentioned in the outline of the proof, we need to prove that the exploration process $(\mathcal{X}_k^{(1)})_{k \geq 1}$ reaches the stage, say Step $k^*$, at which $A_{k*} = T n^{1 - \alpha_1/2}$ for any constant $T > 0$ w.h.p., conditional on $\mathcal{L}_n(r; \bm{b}_0, \bm{w}_0)$ with large enough $r$ (And, after that, we will prove the same thing for the exploration of $\sigma_2$, avoiding the explored simplicial complex $\mathcal{X}_{k^*}^{(1)}$ of $\sigma_1$).  We will see that the exploration reaches such a stage as long as $\tau^{(1)} \geq \delta n^{1- \alpha_1/2}$ for some $\delta > 0$. In general, this can happen with non-zero probability because $\sigma_1$ can be chosen among the edges that belong to small connected components. However, we claim that the large local survival $\mathcal{L}_n(r; \bm{b}_0, \bm{w}_0)$ rules out such a possibility.

\begin{lemma}
\label{lemma: local survival implies larger survival}
Let $\e > 0$ and $\delta > 0$ be given. Then, there exists $r = r(\e) > 0$ such that
\[ \label{eq: 4.1.37} \tag{4.1.37}
\Pr \lp \tau^{(1)} \geq \delta n^{1 - \alpha_1/2} \: | \: \mathcal{L}_n(r; \bm{b}_0, \bm{w}_0) \rp \geq 1 - \e.
\]
\end{lemma}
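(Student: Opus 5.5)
The plan is to compare the active-edge count $A^{(1)}_k$ with a random walk whose increments are stochastically bounded below by a supercritical binomial offspring law, started from the $b_0^{(1)}\ge r$ active edges of $\partial B_{X_n}(\sigma_1;r)$. Write $A^{(1)}_k = b_0^{(1)} + \sum_{j\le k}(E_j - 1)$, where $E_j \ge \tilde E_j = F_j + B_j$. On the time window $k \le \delta n^{1-\alpha_1/2}$, I would first check that the thinning in Corollary~\ref{cor: probability of exploration processes} is negligible, so that conditionally on $\mathcal{X}^{(1)}_{k-1}$,
\[
\tilde E_k \geqst 2\,\mathsf{Bin}\big(n - o(n),\, (1-o(1))\lambda/n\big).
\]
For $d=2$ the correction terms in $p_k^F$ and $p_k^B$ are $(k-1)\lambda/n$ and $\sum_{w\in\sigma_k}\deg^{\mathcal{X}_{k-1}}_{0,1}(w)\,p_{2,1}$, where $p_{2,1}$ is of order $p_1p_2/p_1$ up to the $p_1$ factor of the missing edge. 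The first is $O(n^{-\alpha_1/2}) = o(1)$ for $\alpha_1>0$. In the Linial--Meshulam case $\alpha_1=0$, Lemmas~\ref{lemma: forward estimate} and \ref{lemma: backward estimate} give exactly $\lambda/n+o(1)$ with no correction, so nothing is needed there. The degree term is $o(n^{1-\alpha_1})\cdot O(n^{\alpha_1-1}) = o(1)$ by Lemma~\ref{lemma: upper bounds for degrees d =2}. The vertices of $B_{X_n}(\sigma_2;r)$ that we avoid are $O(1)$ in number and only change $n$ to $n-O(1)$. So on a w.h.p.\ event $\mathcal{G}_n$ where \eqref{eq: 4.1.7} holds, the increments dominate i.i.d.\ copies of $2\mathsf{Bin}(n',\lambda'/n)-1$ with $2\lambda'>1$ fixed.

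\textbf{Coupling.} Next I would couple the exploration with a random walk $S_k = b_0^{(1)} + \sum_{j\le k}(\xi_j-1)$, where the $\xi_j$ are i.i.d.\ $2\mathsf{Bin}(n',\lambda'/n)$, so that $A^{(1)}_k \ge S_k$ for all $k \le \min(\tau^{(1)}, \delta n^{1-\alpha_1/2})$ on $\mathcal{G}_n$. This is done by sequentially realising each $\tilde E_k$ as a sum of the dominating binomial plus a nonnegative remainder, conditionally on the past. The conditioning on $\mathcal{L}_n(r;\bm b_0,\bm w_0)$ only fixes the first $w_0^{(1)}$ steps, which are already absorbed into the initial state $\mathcal{X}^{(1)}_0 = B_{X_n}(\sigma_1;r)$. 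Lemmas~\ref{lemma: forward estimate}--\ref{lemma: backward estimate} are stated conditionally on the whole past, so they still apply. One subtlety: Lemma~\ref{lemma: upper bounds for degrees d =2} is stated unconditionally. Since $\Pr(\mathcal{L}_n)$ is bounded below by a positive constant for fixed $r$ and fixed $\bm b_0,\bm w_0$, by Theorem~\ref{thm: LWC in Prob of MRSC}, w.h.p.\ events remain w.h.p.\ after conditioning.

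\textbf{Conclusion.} Then $\{\tau^{(1)} < \delta n^{1-\alpha_1/2}\}\cap\mathcal{G}_n \subseteq \{S_k \le 0$ for some $k\}$. The walk has positive drift $2\lambda'-1>0$, so the Cramér/Chernoff bound (or the optional-stopping exponential martingale) gives $\Pr(\exists k: S_k\le 0) \le e^{-c\,b_0^{(1)}} \le e^{-cr}$, with $c = c(\lambda)>0$ uniform in $n$. This is simply the extinction probability $\gamma^{r}$ of the corresponding branching process started from $r$ individuals, where $\gamma<1$. Choosing $r$ with $e^{-cr}<\e/2$ and $n$ large so that $\Pr(\mathcal{G}_n^c\mid\mathcal{L}_n)<\e/2$ gives \eqref{eq: 4.1.37}.

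\textbf{Main obstacle.} The step I expect to be hardest is justifying the uniform lower bound on $p_k^F,p_k^B$ along the whole window. In particular, it must cover the regime where $s_0(\mathcal{X}_{k-1})$ becomes order $n$, so that the forward/backward split changes. This is handled through \eqref{eq: 3.2.29}, which only loses $C\max\deg$ vertices. It also requires that Lemma~\ref{lemma: upper bounds for degrees d =2} applies to the exploration started from $B_{X_n}(\sigma_1;r)$ while avoiding $B_{X_n}(\sigma_2;r)$. I would handle that by rerunning its proof verbatim: the roots of Scenario~2 trees are now the $O(1)$ initial edges together with the Scenario~1 edges, which does not affect the bounds.
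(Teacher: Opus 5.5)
Your proposal is correct and follows essentially the same route as the paper. Both arguments bound the active-edge count from below by a skip-free random walk with i.i.d.\ increments of the form $2\mathsf{Bin}(\lfloor(1-\gamma)n\rfloor,(1-\gamma)\lambda/n)-1$, justified by the forward/backward domination corollary together with the vertex-degree lemma. Both then bound the ruin probability from $b_0^{(1)}\ge r$ by $q_n^{r}$ with $q_n\to q<1$; the paper gets this via optional stopping for the martingale $q_n^{Z_k}$, which is exactly your extinction-probability/exponential-martingale bound. Your direct lower bound $A^{(1)}_k\ge b_0^{(1)}+\sum_{j\le k}(\tilde E_j-1)$ and your explicit handling of the conditioning on $\mathcal{L}_n(r;\bm b_0,\bm w_0)$ are, if anything, slightly cleaner than the paper's version.
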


\begin{proof}
Because it suffices to consider a lower bound of $A_k$, we consider the active edges obtained from the forward and backward exploration only. More precisely, let us define $(\tilde{A}^{(1)}_k)$ as follows: $\tilde{A}^{(1)}_k = A^{(1)}_k$ for $1 \leq k \leq s_1 \lp \partial B_{X_n}(\sigma_1; r) \rp$. For $k \geq s_1 \lp \partial B_{X_n}(\sigma_1; r) \rp$, $\tilde{A}_{k+1} := \tilde{A}_k$ if the edge explored at Step $k$ has been obtained from a sibling triangle; otherwise, $\tilde{A}^{(1)}_{k+1} := \tilde{A}^{(1)}_k + F_k^{(1)} + B_k^{(1)} - 1$, where $F_k^{(1)}$ and $B_k^{(1)}$ denote the number of forward and backward edges obtained at Step $k$ through the exploration process $(\mathcal{X}_k^{(1)})_{k \geq 1}$

By Lemma~\ref{lemma: upper bounds for degrees d =2}, it is guaranteed that $\deg_{0, 1}^{\mathcal{X}^{(1)}} (v)$ is of order $o(n^{1-\alpha_1})$ with probability at least $1 - \e$ for all large $n$ and $v \in \mathcal{X}_k^{(1)}$ until we explore $T n^{1 - \alpha_1/2}$ edges for any large constant $T > 0$. By Corollary~\ref{cor: probability of exploration processes}, there is an event of probability $\geq 1 - \e$ on which 
\[
F_k^{(1)} + B_k^{(1)} \geqst 2 \mathsf{Bin} \lp n - s_0 \lp B_{X_n}(\sigma_2; r) \rp - o(n), \frac{\lambda}{n} (1 - o(1)) \rp,
\]
conditional on $\mathcal{X}_{k-1}^{(1)}$ for all $k \leq Tn^{1 - \alpha_1/2}$, however large the fixed $T > 0$ is. Take $\gamma > 0$ so small that $2 \lambda (1- \gamma)^2 > 1$. Then there exists a large $N > 0$ such that whenever $n \geq N$, we have
\[ \label{eq: 4.1.38} \tag{4.1.38}
F_k^{(1)} + B_k^{(1)} \geqst 2 \mathsf{Bin} \lp \lfloor (1 - \gamma)n \rfloor, \frac{\lambda}{n} (1 - \gamma) \rp,
\]
conditional on $\mathcal{X}^{(1)}_{k-1}$ with probability $ \geq 1 - \e$. Let us write the mean of the the distribution on the right-hand side of (\ref{eq: 4.1.38}) as $2 \lambda' > 1$. Let $(Z_k^{(n)})_{k \geq 0}$ be the sequence of random variables adapted to the filtration $(\sigma(\mathcal{X}_k^{(1)}))_{k \geq 1}$ whose law is specified as follows  and for each $k \geq 1$, conditional on $\mathcal{X}_{k-1}^{(1)}$, we have
\[
Z_k^{(n)} - Z_{k-1}^{(n)} \sim 2 \mathsf{Bin} \lp \lfloor n (1 - \gamma) \rfloor, \frac{\lambda}{n} (1 - \gamma) \rp - 1.
\]
For each $m \geq 0$, let us define the hitting time
\[ \label{eq: 4.1.39} \tag{4.1.39}
T_m := \{k \geq 1 \: : \: Z_k^{(n)} = m \}.
\]
Then, proving (\ref{eq: 4.1.37}) boils down to proving the following: 
\[ \label{eq: 4.1.40} \tag{4.1.40}
\Pr \lp T_0 > T_{\delta  n^{1 - \alpha_1/2}} \: | \: Z_0^{(n)} = b_0^{(1)} \rp,
\]
for $b_0^{(1)} \geq r$. Note that the above problem is essentially identical to the gambler's ruin problem for a random walk with a positive drift (see, for example, \cite[Theorem 4.8.9]{Durrett:2019}).

Let $W \sim 2 \mathsf{Bin} \lp \lfloor n (1 - \gamma) \rfloor, \frac{\lambda}{n} (1 - \gamma) \rp$ and let $G_n(s)$ be the probability generating function of $W$, i.e.,
\[
G_n(s) := \E \lb s^{W} \rb \quad s \in [0, 1].
\]
Let $q_n$ be the smallest solution in $[0, 1]$ to the equation $G_n(s) = s$. Since $W$ follows a binomial distribution with mean bigger than $1$, $q_n \in (0, 1)$ for every $n$. Let us define
\[ \label{eq: 4.1.41} \tag{4.1.41}
M_k^{(n)} := q_n^{Z_k^{(n)}}.
\]
Then, $(M_k^{(n)})_{k \geq 1}$ is a martingale adapted to the filtration $(\sigma(\mathcal{X}_k^{(1)}))_{k \geq 1}$. In fact,
\[
\begin{aligned}
\E \lb M_k^{(n)} \: | \: \mathcal{X}_{k-1}^{(1)} \rb  = \E \lb q_n^{Z_{k-1} + (Z_k - Z_{k-1})} \: | \: \mathcal{X}_{k-1}^{(1)} \rb & = q_n^{Z_{k-1}} \E \lb q_n^{W-1} \rb \\
& = M_{k-1}^{(n)}.
\end{aligned}
\]
Note that the last equality follows our choice of $q_n$. Set $\tau := T_0 \wedge T_{\delta n^{1 - \alpha_1/2}}$. By the optimal stopping theorem (\cite[Theorem 4.8.3]{Durrett:2019}), we obtain
\[ \label{eq: 4.1.42} \tag{4.1.42}
\E \lb M_{\tau} \: | \: Z_0 = b_0^{(1)} \rb = \E \lb M_0 \: | \: Z_0 = b_0^{(1)} \rb = q^{b_0^{(1)}} \leq q_n^r.
\]
On the other hand, we have
\[ \label{eq: 4.1.43} \tag{4.1.43}
\begin{aligned}
\E \lb M_{\tau} \: | Z_0 = b_0^{(1)} \rb & = \E \lb q_n^{M_{\tau}} 1\{ T_0 < T_{\delta n^{1 - \alpha_1/2}} \} \: | \: Z_0 = b_0^{(1)} \rb + \E \lb q_n^{M_{\tau}} 1\{ T_0 > T_{\delta n^{1 - \alpha_1/2}} \} \: | \: Z_0 = b_0^{(1)} \rb \\
& \geq \E \lb q_n^{M_{T_0}} 1\{ T_0 < T_{\delta n^{1 - \alpha_1/2}} \} \: | \: Z_0 = b_0^{(1)} \rb \\
& = \Pr \lp T_0 < T_{\delta n^{1 - \alpha_1/2}}  \: | \: Z_0 = b_0^{(1)} \rp.
\end{aligned}
\]
Hence, we obtain
\[ \label{eq: 4.1.44} \tag{4.1.44}
\Pr \lp T_0 < T_{\delta n^{1 - \alpha_1/2}}  \: | \: Z_0 = b_0^{(1)} \rp \leq q_n^r.
\]
Note that the random variable $W$ converges in distribution to $2 \mathsf{Poi}(\lambda')$ as $n \to \infty$. This implies that $q_n \to q \in (0, 1)$ as $n \to \infty$. Here, $q$ is the smallest solution to the equation $G(s) = s$, where $G(s)$ is the probability generating function of the distribution $2 \mathsf{Poi}(\lambda')$. Choose small enough $\gamma' > 0$ so that $q(1 + \gamma') < 1$. Then, for all large enough $n$, $q_n < q(1 + \gamma')$. Now, choose $r$ so large that it satisfies $( q ( 1 + \gamma'))^r < \e$ for the $\e > 0$ initially given. This completes the proof.

\end{proof}

Next, using Lemma~\ref{lemma: local survival implies larger survival}), we prove that the process $(A^{(1)}_k)_{k \geq 1}$ reaches a stage $k^*$ at which $A_{k^*} \geq T n^{1 - \alpha_1/2}$ for any constant $T > 0$ w.h.p., conditional on large local survival. 

\begin{lemma}
\label{lemma: exploration of the first component}
Let $\e > 0$ and $y_1 > 0$ be given. Then, there exist $R = R(\e) >0$ and  $T_1 = T_1(y_1) > 0$ such that whenever $r \geq R(\e)$, 
\[ \label{eq: 4.1.45} \tag{4.1.45}
\liminf_{n \to \infty} \Pr \lp A^{(1)}_{\lfloor T_1 n^{1 - \alpha_1/2} \rfloor} \geq y_1 n^{1 - \alpha_1/2} \: | \: \mathcal{L}_n(r; \bm{b}_0, \bm{w}_0) \rp \geq 1 - \e.
\]
\end{lemma}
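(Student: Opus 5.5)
We will couple the active-edge process $(A^{(1)}_k)$ from below with the random walk $(Z_k^{(n)})$ built in the proof of Lemma~\ref{lemma: local survival implies larger survival}. Work on the event $\mathcal{G}_n$ on which the degree bound of Lemma~\ref{lemma: upper bounds for degrees d =2} holds up to step $T_1 n^{1-\alpha_1/2}$. By that lemma, $\Pr(\mathcal{G}_n)\ge 1-\e/3$ for all large $n$. On $\mathcal{G}_n$, Corollary~\ref{cor: probability of exploration processes} together with (\ref{eq: 4.1.38}) shows that the increments $F_k^{(1)}+B_k^{(1)}-1$ of the reduced process $\tilde{A}^{(1)}_k\le A^{(1)}_k$ stochastically dominate i.i.d. copies of $W-1$, where $W\sim 2\mathsf{Bin}(\lfloor(1-\gamma)n\rfloor,\lambda(1-\gamma)/n)$ has mean $2\lambda'>1$. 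Sibling steps, which leave $\tilde A^{(1)}$ unchanged, only help in this comparison. Note that the bound must be uniform in the growing explored set; since $T_1$ is a constant, the explored vertex set has size $O(n^{1-\alpha_1/2})=o(n)$ and costs only the $o(n)$ loss already absorbed in (\ref{eq: 4.1.38}). Hence we may construct, on a common probability space, a random walk $Z_k$ with $Z_0=b_0^{(1)}\ge r$ and i.i.d. increments $W_k-1$, such that $A^{(1)}_k\ge \tilde A^{(1)}_k\ge Z_k$ for all $k\le \tau^{(1)}\wedge T_1n^{1-\alpha_1/2}$ on $\mathcal{G}_n$.

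Next we combine survival with a law of large numbers. Set $\mu:=2\lambda'-1>0$ and choose $T_1:=2y_1/\mu$. By Lemma~\ref{lemma: local survival implies larger survival}, more precisely by its proof through (\ref{eq: 4.1.44}), we get $\Pr(Z \text{ hits } 0 \text{ before time } T_1 n^{1-\alpha_1/2})\le q_n^r$. We pick $R(\e)$ so that $(q(1+\gamma'))^R<\e/3$. It then remains to show that $Z_{\lfloor T_1 n^{1-\alpha_1/2}\rfloor}\ge y_1 n^{1-\alpha_1/2}$ with probability $\ge 1-\e/3$. The increments $W_k-1$ have mean $\mu_n\to\mu$ and uniformly bounded variance (at most $4\cdot 2\lambda$). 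Chebyshev's inequality applied to the sum of $K=\lfloor T_1n^{1-\alpha_1/2}\rfloor$ of them gives
\[
\Pr\big(Z_K-Z_0<\tfrac{\mu}{2}K\big)\le \frac{C K}{(\mu K/4)^2}\to 0 .
\]
Since $\tfrac{\mu}{2}K\approx y_1 n^{1-\alpha_1/2}$, the claim follows after adjusting $T_1$ slightly upward. On the intersection of these three good events, the exploration has not terminated before step $K$. Moreover, the coupling gives $A^{(1)}_K\ge Z_K\ge y_1n^{1-\alpha_1/2}$. Taking $\liminf_n$ yields (\ref{eq: 4.1.45}).

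The main obstacle is making the coupling rigorous. The comparison in Corollary~\ref{cor: probability of exploration processes} holds only conditionally on $\mathcal{X}^{(1)}_{k-1}$ and only while the degree bound is in force. Moreover, that degree bound is a w.h.p. statement about the very exploration we are running. I would handle this by defining the stopping time $\kappa$, the first step at which the maximal vertex--edge degree exceeds $\varepsilon_n n^{1-\alpha_1}$ (with $\varepsilon_n\to0$ chosen as in Lemma~\ref{lemma: upper bounds for degrees d =2}). Up to $\kappa\wedge\tau^{(1)}$ we use the stochastic domination step by step, via a sequential quantile coupling of each increment with a fresh $W_k$. After $\kappa$ we let $Z$ run freely, so that $\{\kappa\le K\}$ is charged to the $\e/3$ failure probability. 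A secondary point is that the backward count in (\ref{eq: 3.2.28}) subtracts $C\max\deg$, which is $o(n^{1-\alpha_1})\le o(n)$ and therefore negligible on $\mathcal{G}_n$. Conditioning on $\mathcal{L}_n(r;\bm b_0,\bm w_0)$ only fixes the initial state $Z_0=b_0^{(1)}$ and the excluded vertex set of $B_{X_n}(\sigma_2;r)$, both of which are $O(1)$ in size, so all the bounds above hold uniformly in $(\bm b_0,\bm w_0)$.
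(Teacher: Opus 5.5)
Your route works, given the same inputs the paper uses: the degree lemma, Corollary~\ref{cor: probability of exploration processes} and (4.1.38). It is, however, organized differently from the paper's proof.

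\textbf{How the two proofs differ.} The paper first restricts to the survival event $\{\tau^{(1)}\ge\delta n^{1-\alpha_1/2}\}$ supplied by Lemma~\ref{lemma: local survival implies larger survival}. It then runs a fluid-limit argument directly on the lower-bound process.
\begin{itemize}
\item It writes $\tilde\alpha^{(1)}_n=\tilde m^{(1)}_n+\tilde\delta^{(1)}_n$.
\item It controls $\sup_{t\le T}|\tilde m^{(1)}_n(t)|$ by Doob's and the Burkholder--Davis--Gundy inequalities, using that the quadratic variation is $O(n^{-1+\alpha_1/2})$.
\item It bounds the drift pathwise from below by $(2\lambda-1)t-\e'$ on the good degree event.
\end{itemize}
This gives a lower bound on the whole trajectory over $[\delta,T_1]$, and survival up to time $\delta$ covers the initial segment.

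You instead couple $\tilde A^{(1)}$ to one i.i.d.\ walk and get everything from elementary random-walk facts. Optional stopping of $q_n^{Z_k}$ at the bounded time $T_0\wedge K$ gives non-extinction over the whole window with probability at least $1-q_n^{b_0^{(1)}}$. Note that (4.1.44) as stated controls hitting $0$ before a level, not before a time, so this optional-stopping step is the right way to phrase it. Chebyshev at the single time $K$ then gives the linear growth.

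\textbf{What each approach buys.} Your version merges Lemma~\ref{lemma: local survival implies larger survival} and the present lemma into one argument and needs only an endpoint estimate. The price is that you must build the sequential coupling explicitly, which you do with the stopping time $\kappa$ and the quantile construction. The paper needs no coupling in the growth step, because its drift bound is a statement about conditional expectations. In exchange it proves more than is needed, namely uniform-in-time control of the path.

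\textbf{Two details to correct.} Neither is serious.
\begin{itemize}
\item \emph{Sibling steps.} With the paper's definition of $\tilde A^{(1)}$, which is frozen at sibling steps, these steps do not ``only help''. At such a step $\tilde A^{(1)}$ stays put while $Z$ moves by $W_k-1$, which can be positive, so $\tilde A^{(1)}_k\ge Z_k$ can fail. The fix is to use the additive version $\tilde A^{(1)}_k:=b_0^{(1)}+\sum_{j\le k}(F^{(1)}_j+B^{(1)}_j-1)$, which is what (4.1.49) actually uses. Then $A^{(1)}_k-\tilde A^{(1)}_k=\sum_{j\le k}H_j\ge 0$, and (4.1.38) applies at every step, whatever the origin of the explored edge.
\item \emph{Size of the explored vertex set.} It is not $o(n)$ when $\alpha_1=0$. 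In the Linial--Meshulam case $K=\lfloor T_1 n\rfloor$, and by the argument in the proof of Theorem~\ref{thm: vertex} a fraction of roughly $1-e^{-\lambda T_1}$ of the vertices is already discovered. You do not need this claim, though. Bound (3.2.29) counts forward and backward candidates together among about $n-C\max\deg$ vertices, so the size of the explored set never enters (4.1.38).
\end{itemize}
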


\begin{proof}
Choose $0 < \delta < 1$. By Lemma~\ref{lemma: local survival implies larger survival}, there exists large $R(\e) > 0$ such that whenever $r \geq R(\e)$, 
\[ \label{eq: 4.1.46} \tag{4.1.46}
\Pr \lp \tau^{(1)} \geq \delta n^{1 - \alpha_1/2} \: | \: \mathcal{L}_n(r; \bm{b}_0, \bm{w}_0) \rp \geq 1 - \e.
\]
For brevity, let the event $\{\tau^{(1)} \geq \delta n^{1 - \alpha_1/2} \}$ be denoted by $\mathcal{T}^{(1)}$. Then the probability term in (\ref{eq: 4.1.45}) can be written as
\[
\begin{aligned}
& \Pr \lp A^{(1)}_{\lfloor T_1 n^{1 - \alpha_1/2} \rfloor} \geq y_1 n^{1 - \alpha_1/2} \wedge \mathcal{T}^{(1)}  \: | \: \mathcal{L}_n(r; \bm{b}_0, \bm{w}_0) \rp \\
& \qquad \qquad \qquad \qquad \qquad + \Pr \lp A^{(1)}_{\lfloor T_1 n^{1 - \alpha_1/2} \rfloor} \geq y_1 n^{1 - \alpha_1/2} \wedge (\mathcal{T}^{(1)})^c  \: | \: \mathcal{L}_n(r; \bm{b}_0, \bm{w}_0) \rp.
\end{aligned}
\]
By (\ref{eq: 4.1.46}), the latter term of the above is $\leq \e$. Thus, it suffices to prove that for $r \geq R(\e)$, 
\[ \label{eq: 4.1.47} \tag{4.1.47}
\liminf_{n \to \infty} \Pr \lp A^{(1)}_{\lfloor T_1 n^{1 - \alpha_1/2} \rfloor} \geq y_1 n^{1 - \alpha_1/2} \wedge \mathcal{T}^{(1)} \: | \: \mathcal{L}_n(r; \bm{b}_0, \bm{w}_0) \rp \geq 1 - \e.
\]

Since we only consider up to step of order $n^{1 - \alpha_1/2}$, we can again use the lower bound $(\tilde{A}^{(1)}_k)_{k \geq 1}$. For each $t > 0$, define
\[ \label{eq: 4.1.48} \tag{4.1.48}
\tilde{\alpha}_n^{(1)}(t) := \frac{\tilde{A}^{(1)}_{\lfloor t n^{1 - \alpha_1/2} \rfloor}}{n^{1 - \alpha_1/2}}.
\]
For brevity, let us write $\tilde{E}_k^{(1)} := F_k^{(1)} + B_k^{(1)}$. Recall the recursion formula of $\tilde{A}_k^{(1)}$:
\[ \label{eq: 4.1.49} \tag{4.1.49}
\begin{aligned}
\tilde{A}^{(1)}_k & = \tilde{A}^{(1)}_{k-1} + \tilde{E}_k^{(1)} - 1 \\
& = \tilde{A}_0^{(1)} + \sum_{j=1}^k \tilde{E}_j^{(1)} - k.
\end{aligned}
\]
Recall that Corollary~\ref{cor: probability of exploration processes} and Lemma~\ref{lemma: upper bounds for degrees d =2} tell us that there is an event of probability $\geq 1 - \e$ on which 
\[ \label{eq: 4.1.50} \tag{4.1.50}
\tilde{E}_k^{(1)} \big| \mathcal{X}_{k-1}^{(1)} \geqst 2 \mathsf{Bin} \lp n - o(n), \frac{\lambda}{n} (1 - o(1)) \rp
\]
for all $k \leq T n^{ 1- \alpha_1/2}$, however large the fixed $T > 0$ is. We only focus on this event. Define the function $\tilde{m}_n^{(1)}(t)$ and $\tilde{\delta}^{(1)}_n(t)$ by
\[ \label{eq: 4.1.51} \tag{4.1.51}
\tilde{m}_n^{(1)}(t) := \frac{1}{n^{1 - \alpha_1/2}} \sum_{k=1}^{\lfloor t n^{1 - \alpha_1/2} \rfloor} \lp \tilde{E}_k^{(1)} - \E \lb \tilde{E}_k^{(1)} \: | \: \mathcal{X}_{k-1}^{(1)} \rb \rp
\]
and
\[ \label{eq: 4.1.52} \tag{4.1.52}
\tilde{\delta}^{(1)}_n(t) := \frac{1}{n^{1 - \alpha_1/2}} \sum_{k=1}^{\lfloor t n^{1 - \alpha_1/2} \rfloor} \E \lb \tilde{E}_k^{(1)} \: | \: \mathcal{X}_{k-1}^{(1)} \rb - \frac{\lfloor t n^{1 - \alpha_1/2} \rfloor}{n^{1 - \alpha_1/2}} + \frac{1}{n^{1 - \alpha_1/2}} \tilde{A}_0^{(1)}.
\]
respectively. Then, we can write
\[ \label{eq: 4.1.53} \tag{4.1.53}
\begin{aligned}
\tilde{\alpha}^{(1)}_n(t) & = \frac{1}{n^{1 - \alpha_1/2}} \sum_{k=1}^{\lfloor t n^{1 - \alpha_1/2} \rfloor} \lp \tilde{E}_k^{(1)} - \E \lb \tilde{E}_k^{(1)} \: | \: \mathcal{X}_{k-1}^{(1)} \rb \rp \\
& + \frac{1}{n^{1 - \alpha_1/2}} \sum_{k=1}^{\lfloor t n^{1 - \alpha_1/2} \rfloor} \E \lb \tilde{E}_k^{(1)} \: | \: \mathcal{X}_{k-1}^{(1)} \rb - \frac{\lfloor t n^{1 - \alpha_1/2} \rfloor}{n^{1 - \alpha_1/2}} + \frac{1}{n^{1 - \alpha_1/2}} \tilde{A}_0^{(1)}.
\end{aligned}
\]
In other words, we decompose $\tilde{\alpha}_n^{(1)}(t)$ into the sum of its martingale and drift parts.

Let us analyze the martingale term $\tilde{m}_n^{(1)}(t)$ first. We claim that its influence eventually gets negligible. 

\begin{lemma}
\label{lemma: martingale term bound}
Let $T > 0$ and $\e' > 0$ be fixed. Then, whenever $r \geq R(\e)$ that satisfies (\ref{eq: 4.1.46}), it holds that
\[ \label{eq: 4.1.54} \tag{4.1.54}
\lim_{n \to \infty} \Pr \lp \sup_{t \in [0, T]} | \tilde{m}_n^{(1)}(t) | > \e' \: | \: \mathcal{L}_n(r; \bm{b}_0, \bm{w}_0) \rp = 0.
\]
\end{lemma}

\begin{proof}[Proof of Lemma~\ref{lemma: martingale term bound}]
By Doob's inequality (\cite[Theorem 3.8]{Karatzas;Shreve:1998}) and the Burkholder-Davis-Gundy inequality (\cite[Theorem 3.28]{Karatzas;Shreve:1998}), we have
\[ \label{eq: 4.1.55} \tag{4.1.55}
\Pr \lp \sup_{t \in [0, T]} |\tilde{m}^{(1)}_n(t)| > \e' \: | \: \mathcal{L}_n(r; \bm{b}_0, \bm{w}_0) \rp \leq C \frac{\E \lb \langle \tilde{m}_n^{(1)} \rangle_T \: | \: \mathcal{L}_n(r; \bm{b}_0, \bm{w}_0) \rb }{\e'^2}
\]
for some universal constant $C > 0$, where $\langle \tilde{m}_n^{(1)} \rangle$ denotes the quadratic variation process of $\tilde{m}_n^{(1)}(t)$. The quadratic variation is written as
\[ \label{eq: 4.1.56} \tag{4.1.56}
\langle \tilde{m}_n^{(1)} \rangle_T = \frac{1}{n^{2 - \alpha_1}} \sum_{k=1}^{\lfloor T n^{1 - \alpha_1/2} \rfloor} \V \lb \tilde{E}_k^{(1)} \: | \: \mathcal{X}_{k-1}^{(1)}, \mathcal{L}_n(r; \bm{b}_0, \bm{w}_0) \rb.
\]
Recall that $\tilde{E}_k^{(1)}$ satisfies the following trivial upper bound: $\tilde{E}_k^{(1)} \leqst 2 \mathsf{Bin}(n, \lambda/n)$. Since the variance is bounded above by the second moment, the right-hand side of (\ref{eq: 4.1.56}) is bounded by
\[ \label{eq: 4.1.57} \tag{4.1.57}
\frac{1}{n^{2 - \alpha_1}} \sum_{k=1}^{T n^{1 - \alpha_1/2}} 4 \lp \lambda \lp  1 - \frac{\lambda}{n} \rp + \lambda^2 \rp \leq C n^{-1 + \alpha_1/2}
\]
for some constant $C > 0$ that depends on $T$ and $\lambda$. Applying (\ref{eq: 4.1.57}) to (\ref{eq: 4.1.55}), we obtain the desired result (\ref{eq: 4.1.54}).
\end{proof}

Next, let us analyze the drift term $\delta^{(1)}_n(t)$. Define $\delta^{(1)}(t) := (2 \lambda - 1) t$ for every $t > 0$. We claim that the drift terms $\tilde{\delta}^{(1)}_n(t)$ do not go too much below the function $\delta^{(1)}$ w.h.p.

\begin{lemma}
\label{lemma: drift term convergence}
Let $T > 0$ and $0 < \e' < 1$ be fixed. Then, whenever $r \geq R(\e)$ that satisfies (\ref{eq: 4.1.46}), it holds that
\[ \label{eq: 4.1.58} \tag{4.1.58}
\liminf_{n \to \infty} \Pr \lp \tilde{\delta}^{(1)}_n(t) \geq \delta^{(1)}(t) - \e' \: \forall t \in [0, T] \: | \: \mathcal{L}_n(r; \bm{b}_0, \bm{w}_0) \rp \geq 1 - \e.
\]

\end{lemma}

\begin{proof}[Proof of Lemma~\ref{lemma: drift term convergence}]
Recall that with probability $\geq 1 - \e$, the lower bound of $\tilde{E}_k^{(1)}$ given in (\ref{eq: 4.1.50}) is valid for all $k \leq T n^{1 - \alpha_1/2}$. On that event, we have 
\[ \label{eq: 4.1.59} \tag{4.1.59}
\begin{aligned}
\tilde{\delta}^{(1)}_n(t) & \geq \frac{1}{n^{1 - \alpha_1/2}} \sum_{k=1}^{\lfloor t n^{1 - \alpha_1/2} \rfloor} (2 \lambda (1 - o(1)) - \frac{\lfloor t n^{1- \alpha_1/2} \rfloor}{n^{1 - \alpha_1/2}} + \frac{\tilde{A}_0^{(1)}}{n^{1 - \alpha_1/2}} \\
& = 2 \lambda t \lp 1 - o(1) \rp - t + o(1).
\end{aligned}
\]
Here, note that $\tilde{A}_0^{(1)} = b_0^{(1)}$, a fixed constant, under the condition $\mathcal{L}_n(r; \bm{b}_0, \bm{w}_0)$. We have
\[
2 \lambda t \lp 1 - o(1) \rp - t + o(1) - ( 2 \lambda (t - 1) - \e') > 0
\]
uniformly for all $t \in [0, T]$ for all large enough $n$. This proves (\ref{eq: 4.1.58}). 

\end{proof}

Now, we are ready to complete the proof. Choose $\e' > 0$ small enough so that $(2 \lambda  - 1) \delta - 2 e ' > 0$, and for given $y_1 > 0$, choose $T_1$ large enough so that $(2\lambda -1) T_1 - 2\e' > y_1$. By Lemma~\ref{lemma: martingale term bound}, we have $\sup_{t \in [0, T_1]} |\tilde{m}_n^{(1)}(t)| \leq \e'$ with probability $\geq 1 - \e$ for all large enough $n$. On the other hand, with probability $\geq 1 - \e$, the drift term keeps increasing since its slope $(2 \lambda - 1)$ is positive. On the event $\mathcal{T}^{(1)}$, the process $\tilde{\alpha}^{(1)}_n(t)$ is positive for $t \in [0, \delta]$. After the stage $t = \delta$, the drift term keeps increasing and the size of the martingale term is so small that the process $\tilde{\alpha}_n^{(1)}(t)$ lies above the function $\delta^{(1)}(t) - 2 \e'$ with probability $\geq 1 - \e$ for all $t \in [\delta, T_1]$. Since $ \Pr \lp \mathcal{T}^{(1)} \: | \: \mathcal{L}_n(r; \bm{b}_0, \bm{w}_0) \rp \geq 1 - \e$ for all large enough $n$ whenever $r \geq R(\e)$, we complete the proof.

\end{proof}

The next step is to explore the connected component of $\sigma_2$, avoiding a large enough explored part of $\mathcal{C}(\sigma_1)$. In order to make sure that the second exploration is also driven by the supercritical branching process we have to explore $\mathcal{C}(\sigma_1)$ not too large. For $y_1 > 0$, let $k_* := \lfloor T_1 n^{1-\alpha_1/2} \rfloor$, where $T_1$ satisfies the statement of Lemma~\ref{lemma: exploration of the first component}. Fix $y_1$ small enough so that $s_0(\mathcal{X}^{(1)}_{k_*}) \leq \delta n$ w.h.p.  as $n \to \infty$ for some $\delta > 0$ that satisfies $2\lambda (1 - \delta) > 1$. Let us define $(\mathcal{X}_k^{(2)})_{k \geq 0}$ to be the sequence of simplicial complexes obtained by the breadth-first exploration of $\mathcal{C}(\sigma_2)$ that starts at $B_{X_n}(\sigma_2; r)$ and avoids $\mathcal{X}_{k_*}^{(1)}$. More precisely, we take $\mathcal{X}_0^{(2)} = B_{X_n}(\sigma_2; r)$ with initial set of active edges $\mathcal{A}_0^{(2)} := S_1(\partial B_{X_n}(\sigma_2; r)$. The breadth-first exploration starts at the least edge in $\mathcal{A}_0^{(2)}$ with respect to the linear ordering on $S_1(\Delta_n)$ and the exploration only uses vertices in $[n] \setminus S_0(\mathcal{X}_{k_*}^{(1)}$. Then, following the same argument we used to obtain Lemma~\ref{lemma: exploration of the first component}, we obtain the following:

\begin{lemma}
\label{lemma: exploration of the second component}
Let $\e > 0$ and $y_2$ be given. Then, there exists $T_2 = T_2(y_2) > 0$ such that whenever $r \geq R(\e)$, 
\[ \label{eq: 4.1.60} \tag{4.1.60}
\liminf_{n \to \infty} \Pr \lp A^{(1)}_{\lfloor T_2 n^{1 - \alpha_1/2} \rfloor} \geq y_2 n^{1 - \alpha_1/2} \: | \: \mathcal{X}_{k_*}^{(1)}, \mathcal{L}_n(r; \bm{b}_0, \bm{w}_0) \rp \geq 1 - \e.
\]
\end{lemma}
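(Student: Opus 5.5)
The plan is to rerun the argument for Lemma~\ref{lemma: exploration of the first component} conditionally on $\mathcal{X}_{k_*}^{(1)}$. Only the input estimates change, because the second exploration runs on the vertex set $[n]\setminus S_0(\mathcal{X}_{k_*}^{(1)})$ instead of $[n]\setminus S_0(B_{X_n}(\sigma_2;r))$. (The statement concerns the active-edge count of the second exploration, i.e.\ $A^{(2)}_k$.) First I fix a realisation of $\mathcal{X}_{k_*}^{(1)}$ in the good event of probability $\geq 1-\e$ on which:
\begin{itemize}
\item $s_0(\mathcal{X}_{k_*}^{(1)})\leq \delta n$ with $2\lambda(1-\delta)>1$, which holds by the choice of $y_1$;
\item the vertex--edge degree bound of Lemma~\ref{lemma: upper bounds for degrees d =2} holds for the first exploration.
\end{itemize}
Since the second exploration only reveals simplices not yet examined by the first (and those containing vertices of $\mathcal{X}_{k_*}^{(1)}$ are excluded), the conditional arguments of Lemmas~\ref{lemma: forward estimate} and \ref{lemma: backward estimate} go through verbatim, with $T$ replaced by $\mathcal{X}_{k_*}^{(1)}\cup\mathcal{X}_{k-1}^{(2)}$.

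Second, I reprove Lemma~\ref{lemma: upper bounds for degrees d =2} for $(\mathcal{X}^{(2)}_k)_{k\leq T_2 n^{1-\alpha_1/2}}$. The Scenario~1/Scenario~2 decomposition and the branching-process domination by $2\mathsf{Bin}(n,\lambda/n)$ only use the trivial upper bound \eqref{eq: 3.2.30}, which remains valid under the extra conditioning. Hence, w.h.p., all degrees in $\mathcal{X}^{(2)}_{k-1}$ are $o(n^{1-\alpha_1})$ as long as $k\leq T_2n^{1-\alpha_1/2}$, and the same restriction $\lambda<2^{2(1-\alpha_1)/\alpha_1}$ suffices. Combining this with Corollary~\ref{cor: probability of exploration processes}, I obtain on an event of probability $\geq 1-\e$
\[
F^{(2)}_k+B^{(2)}_k \geqst 2\mathsf{Bin}\bigl(\lfloor(1-\delta-\gamma)n\rfloor,\tfrac{\lambda}{n}(1-\gamma)\bigr)
\]
conditional on $\mathcal{X}^{(2)}_{k-1}$. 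Here $\gamma$ is chosen small enough that the mean $2\lambda'=2\lambda(1-\delta-\gamma)(1-\gamma)$ still exceeds $1$.

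Third, I repeat the earlier steps with $\lambda$ replaced by $\lambda'$:
\begin{itemize}
\item the gambler's-ruin martingale $q_n^{Z_k}$ of Lemma~\ref{lemma: local survival implies larger survival}, started from $b_0^{(2)}\geq r$, gives $\tau^{(2)}\geq\delta' n^{1-\alpha_1/2}$ with probability $\geq 1-\e$ for $r\geq R(\e)$, after enlarging $R(\e)$ so that it works for $\lambda'$ as well;
\item the martingale/drift decomposition of $\tilde{\alpha}_n^{(2)}(t)$, with Lemma~\ref{lemma: martingale term bound} unchanged (the variance bound uses only the upper domination) and Lemma~\ref{lemma: drift term convergence} with slope $2\lambda'-1>0$, yields $T_2=T_2(y_2)$ such that $(2\lambda'-1)T_2-2\e'>y_2$.
\end{itemize}

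The main obstacle is the second step. The degree control must now hold in the union $\mathcal{X}_{k_*}^{(1)}\cup\mathcal{X}^{(2)}_{k-1}$, because backward and sibling probabilities in $p_k^B$ depend on degrees of vertices of $\sigma^{(2)}_k$. Those degrees could have been inflated by the first exploration, but the second exploration never uses vertices of $\mathcal{X}_{k_*}^{(1)}$, so only degrees within $\mathcal{X}^{(2)}$ enter. I would verify this by checking that every $\rho\subseteq\sigma^{(2)}_k$ lies outside $S_0(\mathcal{X}_{k_*}^{(1)})$, except possibly the vertices of $B_{X_n}(\sigma_2;r)$, whose number is bounded. If that check fails, the fallback is to add the two degree bounds, each $o(n^{1-\alpha_1})$, which is still enough.
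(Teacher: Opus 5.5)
Your proposal is correct and follows the paper's route. The paper dismisses this lemma as identical to Lemma~\ref{lemma: exploration of the first component}, i.e.\ it reruns that argument for the exploration of $\mathcal{C}(\sigma_2)$ avoiding $S_0(\mathcal{X}^{(1)}_{k_*})$, with $s_0(\mathcal{X}^{(1)}_{k_*})\le\delta n$ and $2\lambda(1-\delta)>1$ keeping it supercritical; your write-up makes explicit what it leaves implicit (the reduced parameter $\lambda'$, redoing Lemma~\ref{lemma: upper bounds for degrees d =2} for $\mathcal{X}^{(2)}$, vertex-disjointness of the two explorations), and you are right that $A^{(1)}$ in the display should read $A^{(2)}$.
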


We omit the detailed proof of this lemma because it is identical to that of Lemma~\ref{lemma: exploration of the first component}. 

The final step is to show that with appropriate choice of $y_1$ and $y_2$, we can show that $\sigma_1 \overset{\sss{2}}{\leftrightarrow} \sigma_2$ given that we are having $y_1 n^{1-\alpha_1/2}$ and $y_2 n^{1 - \alpha_1/2}$ active edges in each of the components $\mathcal{C}(\sigma_1)$ and $\mathcal{C}(\sigma_2)$, respectively. 

\begin{lemma}
\label{lemma: connection happens}
Let $\e > 0$ be given. For $y_1$ and $y_2$ that satisfy $(1 + y_1 y_2 \lambda^2)^{-1} < \e$, it holds that
\[
\limsup_{n \to \infty} \Pr \lp \sigma_1 \centernot\longleftrightarrow \sigma_2 \: | \: A^{(1)}_{\lfloor T_1 n^{1-\alpha_1/2} \rfloor} \geq y_1 n^{1-\alpha_1/2}, A^{(2)}_{\lfloor T_2 n^{1-\alpha_1/2} \rfloor} \geq y_2 n^{1-\alpha_1/2}, \mathcal{L}_n(r; \bm{b}_0, \bm{w}_0) \rp < \e
\]
whenever $r \geq R(\e)$. 
\end{lemma}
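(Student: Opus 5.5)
We condition on the explored complexes $\mathcal{X}^{(1)}_{k_*}$ and $\mathcal{X}^{(2)}_{k_{**}}$, with $k_{**}:=\lfloor T_2 n^{1-\alpha_1/2}\rfloor$, on the event that the two active sets $\mathcal{A}^{(1)}:=\mathcal{A}^{(1)}_{k_*}$ and $\mathcal{A}^{(2)}:=\mathcal{A}^{(2)}_{k_{**}}$ contain at least $y_1 n^{1-\alpha_1/2}$ and $y_2 n^{1-\alpha_1/2}$ edges. The idea is a sprinkling-type second moment argument on a family of cheap connecting structures that use only unexplored triangles. An active edge $e_1\in\mathcal{A}^{(1)}$ has not yet had its incident triangles examined. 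So for an active edge $e_2\in\mathcal{A}^{(2)}$ we can look for a ``bridge'': a vertex $w$, or a short path of two triangles, joining $e_1$ to $e_2$. The cheapest structure uses two triangles sharing a fresh edge. Take $e_1=\{a,b\}$ and $e_2=\{c,f\}$ with a common vertex, or more generally sharing one vertex after choosing $w$. Then the triangles $e_1\cup\{w\}$ and $e_2\cup\{w\}$, together with their new edges, give a $2$-dimensional path. Because both $e_1$ and $e_2$ are already present, the conditional probability of such a bridge is of order $p_1^{\,j}p_2^2$, where $j$ is the number of new edges needed. The goal is to count pairs $(e_1,e_2)$ and bridge vertices $w$ so that the expected number of bridges is at least $y_1y_2\lambda^2(1+o(1))$. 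The $\lambda^2$ factor comes from $(nr_2)^2=(np_1^2p_2)^2\to\lambda^2$, and the count $|\mathcal{A}^{(1)}||\mathcal{A}^{(2)}|\ge y_1y_2 n^{2-\alpha_1}$ is exactly what compensates the remaining factors. The simplest concrete choice is to take bridges in which the two triangles share the new vertex $w$ together with an edge $\{w,x\}$, where $x\in e_1\cap e_2$ is a common vertex. This requires pairs of active edges that share a vertex, and the degree bound of Lemma~\ref{lemma: upper bounds for degrees d =2} is used to show that such pairs are abundant and spread out. If that count fails, I would instead use bridges of length three through one intermediate fresh edge. In either case, I would calibrate the count so that the mean number $\mu_n$ of bridges satisfies $\liminf \mu_n\ge y_1y_2\lambda^2$.

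Next, let $N$ denote the number of bridges, and apply the Paley--Zygmund/Chebyshev-type bound $\Pr(N=0)\le \V(N)/(\V(N)+\E[N]^2)$. It then suffices to show $\V(N)\le \E[N](1+o(1))$, which gives $\Pr(N=0)\le (1+\E N)^{-1}+o(1)\le(1+y_1y_2\lambda^2)^{-1}+o(1)<\e$. This is the form of the hypothesis in the statement. For the variance, I would split the pairs of bridges by how many triangles and edges they share. Disjoint pairs are independent given the explored parts, because the MRSC inclusions are independent across distinct potential simplices once the lower skeleton is fixed. Pairs sharing a triangle or a new edge contribute terms that are controlled by the maximal degree in the explored complexes. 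By Lemma~\ref{lemma: upper bounds for degrees d =2}, this degree is $o(n^{1-\alpha_1})$, and this makes the overlapping terms $o(\E N)$.

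Two issues remain. The first is conditioning: the bridges must use triangles not already examined by either exploration. Since both explorations only ever inspected triangles on explored (non-active) edges, triangles on active edges with fresh vertices $w\notin S_0(\mathcal{X}^{(1)}_{k_*})\cup S_0(\mathcal{X}^{(2)}_{k_{**}})$ are unconditioned. Both explored vertex sets have size at most $\delta n$, so a $(1-2\delta)$ fraction of vertices remain available for $w$. Lemma~\ref{lemma: backward estimate} and Corollary~\ref{cor: probability of exploration processes} supply the lower bound on the conditional inclusion probability, with error factor $1-o(1)$. The second issue, which I expect to be the main obstacle, is obtaining the first moment with the right constant. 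The bridge probability has the correct order only when the needed lower-dimensional edges exist. For $\alpha_1>0$, the edge $\{w,x\}$ costs $p_1$, and the count must balance $n^{2-\alpha_1}$ active pairs against $n\cdot p_1^{j}p_2^2$. I would first try to verify this balance directly for the shared-vertex bridges, using that the active edges in each component cover many vertices. If the constant $y_1y_2\lambda^2$ is not reached exactly, I would absorb the loss by enlarging $y_1,y_2$, which the statement allows.
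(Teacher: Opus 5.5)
There is a genuine gap: the connector you propose does not exist in this setting.

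\textbf{Why your bridges are empty.} By construction the two explorations are vertex-disjoint. The exploration of $\mathcal{C}(\sigma_1)$ avoids $S_0(B_{X_n}(\sigma_2;r))$, and the exploration of $\mathcal{C}(\sigma_2)$ uses only vertices of $[n]\setminus S_0(\mathcal{X}^{(1)}_{k_*})$. So every $e_1\in\mathcal{A}^{(1)}$ is disjoint from every $e_2\in\mathcal{A}^{(2)}$.
\begin{itemize}
\item Your concrete choice (triangles $e_1\cup\{w\}$ and $e_2\cup\{w\}$ glued along $\{w,x\}$ with $x\in e_1\cap e_2$) is therefore an empty family. Its first moment is $0$, and no degree bound can make such pairs ``abundant''.
\item For disjoint $e_1,e_2$, the triangles $e_1\cup\{w\}$ and $e_2\cup\{w\}$ meet only in the vertex $w$. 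They are not adjacent and give no $2$-dimensional path.
\item Your fallback is never specified. A three-triangle connector through a fresh vertex, such as $\{a,b,w\},\{a,w,c\},\{w,c,f\}$, does have mean of order one, since $n\,p_1^5p_2^3\cdot n^{2-\alpha_1}=\Theta(1)$. But it has a different constant and more overlap patterns, so both moments would have to be redone.
\item ``Absorbing the loss by enlarging $y_1,y_2$'' is not allowed. The lemma must hold for every pair $y_1,y_2$ with $(1+y_1y_2\lambda^2)^{-1}<\varepsilon$.
\end{itemize}

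\textbf{The missing idea: no fresh vertex is needed.} Take disjoint $\sigma=\{a,b\}\in\mathcal{A}^{(1)}$ and $\sigma'=\{c,f\}\in\mathcal{A}^{(2)}$. The triangles $\{a,b,c\}$ and $\{a,c,f\}$ share the edge $\{a,c\}$ and connect $\sigma$ to $\sigma'$. The same works for the other placements of a shared edge $\{x,y\}$ with $x\in\sigma$, $y\in\sigma'$.
\begin{itemize}
\item This connector needs only the three edges $\{a,c\},\{b,c\},\{a,f\}$ and two triangles. Its probability is $p_1^3p_2^2=(p_1^2p_2)(p_1p_2)=\Theta(n^{-2+\alpha_1})$, equal to $\lambda^2/n^2$ in the Linial--Meshulam case.
\item That probability is exactly compensated by the $|\mathcal{A}^{(1)}||\mathcal{A}^{(2)}|\ge y_1y_2n^{2-\alpha_1}$ pairs. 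This is the count the paper uses to obtain a mean of order $y_1y_2\lambda^2$.
\item Each of these triangles contains one active (unexplored) edge, and its other edges lie in neither explored complex, so neither exploration ever examined it. The negative information on those edges, coming from absent triangles at explored edges, costs only a $1-o(1)$ factor under the degree bound.
\item In the second moment, $I(\sigma,\sigma')$ and $I(\rho,\rho')$ are independent unless both $\sigma\cap\rho\neq\emptyset$ and $\sigma'\cap\rho'\neq\emptyset$.
\item The number of such correlated quadruples is controlled by the $o(n^{1-\alpha_1})$ degree bound of Lemma~\ref{lemma: upper bounds for degrees d =2}, so their contribution is $o(1)$. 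This yields $\Pr(N=0)\le(1+y_1y_2\lambda^2)^{-1}+o(1)$.
\end{itemize}
Your Paley--Zygmund framework, independence for disjoint configurations, and use of the degree lemma for overlaps are the right tools. What must change is the connector.
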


Recall that we chose $y_1 > 0$ small enough that $2\lambda (1 -  \delta) > 1$ for some $\delta > 0$. At the end, we will choose $y_2 > 0$ large enough so that $(1 + y_1 y_2 \lambda^2)^{-1} < \e$ for the given $\e > 0$.

\begin{proof}[Proof of Lemma~\ref{lemma: connection happens}]
For brevity, let $\mathcal{E}^c$ denote the event $\{A^{(1)}_{\lfloor T_1 n^{1-\alpha_1/2} \rfloor} \geq y_1 n^{1-\alpha_1/2}, A^{(2)}_{\lfloor T_2 n^{1-\alpha_1/2} \rfloor} \geq y_2 n^{1-\alpha_1/2} \}$. Let us consider a pair of active edges $\sigma$ and $\sigma'$ that are connected to $\sigma_1$ and $\sigma_2$, respectively. Note that they are disjoint by our definition of the exploration processes for $sigma_1$ and $\sigma_2$. The shortest $2$-dimensional path connecting $\sigma$ and $\sigma'$ consists of two adjacent triangles, say $\tau$ and $\tau'$, such that $\sigma \subseteq \tau$ and $\sigma' \subseteq \tau'$. There are two possible choices of such a pair $(\tau, \tau')$. Conditional on $\mathcal{E}^*$, the probability that $\tau$ and $\tau'$ appear in $X_n$ needs the appearance of three new edges and two new triangles. Since there are two such possibilities and the intersection of the two shares the appearance of two edges, the probability that at least one of such connections takes place is
\[
\begin{aligned}
p^*  := 2 p_1^3 p_2 - p_1^4 p_2^4 & = 2 p_1 p_2 \lp \frac{\lambda}{n} + o(1) \rp - \lp p_2 \lp \frac{\lambda}{n} + o(1) \rp \rp^2 \\
& = 2 \lambda^2 \Theta (n^{-2 + \alpha_1}) - \lambda^4 \Theta (n^{-4 + 4 \alpha_1}) \\
& = 2 \lambda^2 n^{-2 + \alpha_1} \lp 1 - \Theta (n^{-2 + 3 \alpha_1}) \rp.
\end{aligned}
\]
Here, we used the assumptions $p_1 = \Theta(n^{-\alpha_1})$, $p_2 = \Theta(n^{-\alpha_1})$ for $\alpha_1, \alpha_2 \geq 0$ and $n p_1^2 p_2 \to \lambda$. Recall that this assumption implies that $\alpha_1 \leq 1/2$. 

For brevity, let us write $\mathcal{A}^{(1)} := \mathcal{A}^{(1)}_{\lfloor T_1 n^{1 - \alpha_1/2} \rfloor}$ and $\mathcal{A}^{(2)} := \mathcal{A}^{(2)}_{\lfloor T_2 n^{1 - \alpha_1/2} \rfloor}$. Additionally, for any pair of edges $\sigma$ and $\sigma'$, let us write
\[
I(\sigma, \sigma') := \1 \{ \sigma \overset{\sss{2}}{\longleftrightarrow} \text{ by two adjacent triangles} \}.
\]
Then, we have
\[
\begin{aligned}
& \Pr \lp \sigma_1 \centernot\longleftrightarrow \sigma_2 \: | \: \mathcal{E}^*, \mathcal{L}_n(r; \bm{b}_0, \bm{w}_0) \rp \\
& \qquad \qquad \leq \Pr \lp \bigcap_{\sigma \in \mathcal{A}^{(1)}, \sigma' \in \mathcal{A}^{(2)}} \{ \sigma \centernot\longleftrightarrow \sigma' \text{ by two adjacent triangles} \: | \: \mathcal{E}^*, \mathcal{L}_n(r; \bm{b}_0, \bm{w}_0) \rp \\
& \qquad \qquad = \Pr \lp \sum_{\sigma \in \mathcal{A}^{(1)}, \sigma' \in \mathcal{A}^{(2)}} I(\sigma, \sigma') = 0 \: | \: \mathcal{E}^*, \mathcal{L}_n(r; \bm{b}_0, \bm{w}_0) \rp.
\end{aligned}
\]
We obtain an upper bound for the last expression of the above by using the second moment method. The first moment satisfies
\[ \label{eq: 4.1.61} \tag{4.1.61}
\begin{aligned}
\E \lb \sum_{\sigma \in \mathcal{A}^{(1)}, \sigma' \in \mathcal{A}^{(2)}} I(\sigma, \sigma') \: | \: \mathcal{E}^*, \mathcal{L}_n(r; \bm{b}_0, \bm{w}_0) \rb & \gtrsim y_1 y_2 n^{2 - \alpha_1/2} \lambda^2 n^{-2 + \alpha_1} \lp 1 - \Theta (n^{-2 + 3 \alpha_1}) \rp \\
& = y_1 y_2 \lambda^2  \lp 1 - o(1) \rp.
\end{aligned}
\]
The second moment can be written as the sum of the following two terms:
\[ \label{eq: 4.1.62} \tag{4.1.62}
\E \lb \sum_{(\sigma, \sigma') \in \mathcal{A}^{(1)} \times \mathcal{A}^{(2)}} I(\sigma, \sigma') \: | \: \mathcal{E}^*, \mathcal{L}_n(r; \bm{b}_0, \bm{w}_0) \rb 
\]
and
\[ \label{eq: 4.1.63} \tag{4.1.63}
\E \lb \sum_{ \substack{(\sigma, \rho), (\sigma', \rho') \in \mathcal{A}^{(1)} \times \mathcal{A}^{(2)} \\ (\sigma, \rho) \neq (\sigma', \rho')}} I(\sigma, \sigma') I(\rho, \rho') \: | \: \mathcal{E}^*, \mathcal{L}_n(r; \bm{b}_0, \bm{w}_0) \rb.
\]
Obviously, the first term (\ref{eq: 4.1.62}) is equal to the first moment considered in (\ref{eq: 4.1.61}). To analyze the second term (\ref{eq: 4.1.63}) further, note that $I(\sigma, \sigma')$ and $I(\rho, \rho')$ are independent unless $\rho \cap \rho \neq \emptyset$ and $\sigma' \cap \rho' \neq \emptyset$ simultaneously. Let us write
\[
\mathcal{D} := \{ (\sigma, \rho, \sigma', \rho') \in \mathcal{A}^{(1)} \times \mathcal{A}^{(1)} \times \mathcal{A}^{(2)} \times \mathcal{A}^{(2)} \: | \: (\sigma, \sigma') \neq (\rho, \rho'), \sigma \cap \rho \neq \emptyset, \sigma' \cap \rho' \neq \emptyset \}
\]
and
\[
\mathcal{D}' := \left\{ (\sigma, \rho, \sigma', \rho') \in \mathcal{A}^{(1)} \times \mathcal{A}^{(1)} \times \mathcal{A}^{(2)} \times \mathcal{A}^{(2)} \: | \: (\sigma, \sigma') \neq (\rho, \rho') \wedge \lp \sigma \cap \rho = \emptyset \vee \sigma' \cap \rho' = \emptyset \rp \right\}.
\]
Then, the expression (\ref{eq: 4.1.63}) can be written as
\[
\E \lb \sum_{(\sigma, \rho, \sigma', \rho') \in \mathcal{D}} I(\sigma, \sigma') I(\rho, \rho') \: \mathcal{E}', \mathcal{L}_n(r; \bm{b}_0, \bm{w}_0) \rb +\E \lb \sum_{(\sigma, \rho, \sigma', \rho') \in \mathcal{D}'} I(\sigma, \sigma') I(\rho, \rho') \: \mathcal{E}', \mathcal{L}_n(r; \bm{b}_0, \bm{w}_0) \rb.
\]
By independence, the second term of the above is upper bounded by
\[
(y_1 y_2)^2 n^{4 - 2 \alpha_1} \lambda^4 n^{-4 + 2 \alpha_1} \lp 1 - \Theta (n^{-2 + 3 \alpha_1} ) \rp = y_1^2 y_2^2 \lambda^4 (1 - o(1)).
\]
As for the first term, recall that for each $\sigma \in \mathcal{A}^{(1)}$, the number of edges incident to one of its vertices in $\mathcal{X}^{(1)}_{\lfloor T_1 n^{1 - \alpha_1/2} \rfloor}$ is of order $o(n^{1 - \alpha_1})$ with probability $\geq 1 - \e$ by Lemma~\ref{lemma: upper bounds for degrees d =2}. Likewise, for each $\sigma' \in \mathcal{A}^{(2)}$, the number of edges incident to one of its vertices in $\mathcal{X}^{(2)}_{\lfloor T_2 n^{1-\alpha_1/2} \rfloor}$ is of order $o(n^{1 -\alpha_1/2})$ with probability $\geq 1 - \e$. If $\sigma$ and $\rho$ shares a vertex and $\sigma'$ and $\rho'$ share a vertex, then the probability that $I(\sigma, \sigma') I(\rho, \rho') = 1$ is the probability that we obtain four adjacent triangles connecting them, which needs five new edges and four new triangles. This happens with probability 
\[
p_1^5 p_2^4 = \lp \frac{\lambda}{n} + o(1) \rp^2 p_1 p_2^2 = \lambda^2 \Theta (n^{-4 + 3 \alpha_1}).
\]
Thus, the contribution of such cases is bounded above by
\[
y_1 y_2 n^{2 - \alpha_1} o(n^{2 - 2\alpha_1}) \lambda^2 \Theta (n^{-4 -3 \alpha_1}) = o(1).
\]
On the other hand, if $\sigma = \rho$ and $\sigma'$ and $\rho'$ share a vertex, the probability that $I(\sigma, \sigma') I(\rho, \rho') = 1$ is the probability that we obtain three adjacent triangles connecting them, which needs four new edges and three new triangles. This happens with probability
\[
p_1^4 p_2^3 = \lp \frac{\lambda}{n} + o(1) \rp^2 p_2 = \lambda^2 n^{-2} \Theta (n^{-1 + 2\alpha_1}) = \lambda^2 \Theta (n^{-3 + 2\alpha_1}).
\]
Thus, the contribution of such cases is bounded above by
\[
y_1 y_2 n^{2 - \alpha_1} o(n^{1 - \alpha_1}) \lambda^2 \Theta (n^{-3 +2\alpha_1}) = o(1).
\]
Therefore, the entire second moment (\ref{eq: 4.1.62}) is bounded above by
\[
y_1^2 y_2^2 \lambda^4 (1 - o(1)) + o(1)
\]
as $n \to \infty$. By the second moment method, we have
\[
\begin{aligned}
& \Pr \lp \sum_{\sigma \in \mathcal{A}^{(1)}, \sigma' \in \mathcal{A}^{(2)}} I(\sigma, \sigma') \geq 1 \: | \: \mathcal{E}^*, \mathcal{L}_n(r; \bm{b}_0, \bm{w}_0) \rp \\
& \hspace{180pt} \geq  \frac{\E \lb \sum_{\sigma \in \mathcal{A}^{(1)}, \sigma' \in \mathcal{A}^{(2)}} I(\sigma, \sigma') \: | \: \mathcal{E}^*, \mathcal{L}_n(r; \bm{b}_0, \bm{w}_0) \rb^2 }{\E \lb \lp \sum_{\sigma \in \mathcal{A}^{(1)}, \sigma' \in \mathcal{A}^{(2)}} I(\sigma, \sigma') \rp^2 \: | \: \mathcal{E}^*, \mathcal{L}_n(r; \bm{b}_0, \bm{w}_0) \rb }.
\end{aligned}
\]
The right-hand side of the above is lower bounded by
\[
\frac{y_1 y_2 \lambda^2}{1 + \frac{y_1^2 y_2^2 \lambda^4 (1 - o(1)) + o(1)}{y_1 y_2 \lambda_2}} = \frac{y_1 y_2 \lambda^2}{1 + y_1 y_2 \lambda^2 (1 - o(1)) + o(1)}
\]
with probability $\geq 1 - \e$. Since we chose $y_1$ and $y_2$ so that the right-hand side is bigger than $1 - \e$, as $n \to \infty$, the right-hand side is $\geq 1 - \e$. This completes the proof.
\end{proof}

\subsection{Proof of Theorem \ref{thm: subcritical regime}}

In this section, we prove that the size of the largest $d$-dimensional connected component in $X_n \sim \MRSC_d(n; \bm{p})$ is more or less $\log n$ in the subcritical regime $\lambda < 1/d$. In particular, there is no giant $d$-dimensional component in this regime. The overall  arguments are similar to that of subcritical Erd\H{o}s-R\'enyi random graphs~\cite[Theorem 4.4]{Hofstad:2021}.

Let $\sigma$ be an arbitrary $(d-1)$-simplex in $X_n$ and consider the breath-first exploration of the connected component $\mathcal{C}(\sigma)$. In this section, we write $(\mathcal{X}_k)_{k \geq 0}$ for the sequence of simplicial complexes obtained by the breadth-first exploration of $\mathcal{C}(\sigma)$ starting at $\sigma$. Recall that the number of new active $(d-1)$-simplices $E_k$ consists of the forward, backward, and sibling $(d-1)$-simplices: $E_k = F_k + B_k + H_k$. We claim that the contribution of the forward and backward exploration can make the component grow only up to size $\log n$. If that claim is true, the contribution of the sibling exploration is negligible to the size of the component since $H_k \leqst d \mathsf{Bin}(s_0(\mathcal{X}_{k-1}), p_{d, d-1})$ for each $k$, conditional on $\mathcal{X}_{k-1}$, with $s_0(\mathcal{X}_{k-1}) \lesssim \log n$ and $p_{d, d-1} = \Theta(n^{-\beta})$ for some $\beta > 0$. Therefore, it suffices to regard that the exploration takes place only using the forward and backward explorations. Recall that, conditional on $\mathcal{X}_{k-1}$,
\[ \label{eq: 4.2.1} \tag{4.2.1}
\tilde{E}_k \leqst d \mathsf{Bin} \lp n, \frac{\lambda}{n} \rp.
\]

\begin{proof}[Proof of Theorem \ref{thm: subcritical regime}]
For each $k \geq 0$, let $Z_{\geq k}$ be the number of $(d-1)$-simplices that are contained in $d$-dimensional connected components of size at least $k$:
\[
Z_{\geq k} := \sum_{\sigma \in S_{d-1}(X_n)} \1 \{ s_{d-1}\lp \mathcal{C}(\sigma) \rp \geq k \}.
\]
Then, we have
\begin{equation} \label{subcritical proof 2}
\{ s_{d-1} \lp \mathcal{C}_{\max} \rp \geq k \} = \{Z_k \geq k \}
\end{equation}
This equality~\eqref{subcritical proof 2}, along with the fact that $s_{d-1}(X_n)) \leq \binom{n}{d}$, gives us the following union bound:
\[ \label{eq: 4.2.3} \tag{4.2.3}
\begin{aligned}
\Pr \lp s_{d-1} \lp \mathcal{C}_{\max} \rp \geq k \rp & = \Pr \lp \sum_{\sigma \in S_{d-1}(X_n)} \1\{ s_{d-1}(\mathcal{C}(\sigma)) \geq k \} \rp   \\
&  \leq \binom{n}{d} \Pr \lp s_{d-1} \lp \mathcal{C}(\sigma) \rp \geq k \rp,
\end{aligned}
\]
where $\sigma$ is an arbitrarily fixed edge, say $\sigma = \{1, \dots, d\}$. Thus, it suffices to estimate the probability $\Pr \lp s_{d-1} \lp \mathcal{C}(\sigma)\rp \geq k \rp$. Let $T$ be the total progeny of a branching process with offspring distribution $d \mathsf{Bin} (n, \lambda/n)$. By \eqref{eq: 4.2.1}, we have
\[
\Pr \lp s_{d-1} \lp \mathcal{C}(\sigma) \rp \geq k \rp \leq \Pr \lp T \geq k \rp. 
\]
Let $\{W_i \}_{i \geq 1}$ be i.i.d.~random variables whose common distribution is $d \mathsf{Bin}(n, \lambda/n)$ and let 
\[
U_k := W_1 + \cdots W_k - (k-1), \quad k \geq 1.
\]
Then, $T > k$ if and only if $U_k > 0$. Thus, we can obtain the upper bound of the probability $\Pr \lp T > k \rp$ as follows:
\[ \label{eq: 4.2.4} \tag{4.2.4}
\begin{aligned}
\Pr \lp T > k \rp \leq \Pr \lp W_1 + \cdots + W_k \geq k \rp & = \Pr \lp \hat{W}_1 + \cdots + \hat{W}_k \geq \frac{k}{d} \rp \\
& \leq e^{-\frac{k}{d} I_{\lambda}},
\end{aligned}
\]
where $\{\hat{W}_i \}_{i \geq 1}$ are i.i.d. random variables whose common distribution is $\mathsf{Bin}(n, \lambda/n)$ and $I_{\lambda}$ is the large deviations rate function for the Poisson distribution of mean $\lambda$, given by
\[ \label{eq: 4.2.5} \tag{4.2.5}
I_{\lambda} = \lambda - 1 - \log \lambda.
\]
Now, take $k = k(n) = a \log n$ so that $a > d^2/I_{\lambda}$. Then,
\[ \label{eq: 4.2.6} \tag{4.2.6}
\Pr \lp s_{d-1} \lp \mathcal{C}_{\max} \rp > a \log n \rp \leq n^d \Pr \lp T > a \log n \rp \leq n^{d - \frac{a I_{\lambda}}{d}} = n^{- \delta},
\]
where $\delta = -d + a I_{\lambda}/d > 0$. This establishes the proposed upper bound. 

To establish the proposed lower bound, we only consider the contribution of the forward explorations. By the obtained upper bound, we know that $s_{d-1}(\mathcal{C}_{\max}) \leq C \log n$ for some constant with probability $\geq 1- \e$ for all large enough $n$. This implies that the exploration of any connected component terminates $\log n$ steps up to constants and the number of vertices contained in any component is also bounded by $\log n$ up to constants with probability $\geq 1 - \e$. On this event, the forward exploration of any connected component $\mathcal{C}(\sigma)$ satisfies that, conditional on $\mathcal{X}_{k-1}$,
\[
F_k \geqst d \mathsf{Bin} \lp n - C \log n, \frac{\lambda}{2n} \rp
\]
for all large $n$. Then, obviously, the size of $\mathcal{C}(\sigma)$ is lower bounded by that of the component $\mathcal{C}'(\sigma)$ that only considers forward simplices, which we call a forward component. We are going to establish a lower bound for the largest forward component $\mathcal{C}'_{\max}$. Note that if $s_{d-1}\lp \mathcal{C}'_{\max} \rp \geq c \log n$ for some $c > 0$, it implies that there exists at least one $d$-dimensional component whose size is $\geq c \log n$, which in turn implies $s_{d-1} \lp \mathcal{C}_{\max} \rp \geq c \log n$. 

Let us define
\[
Z'_{\geq k} := \sum_{\sigma \in S_{d-1}(X_n)} \1\{ s_{d-1} \lp \mathcal{C}'(\sigma) \rp \geq k \}.
\]
Similarly as \eqref{eq: 4.2.1}, for every $k$, we have 
\[ \label{eq: 4.2.7} \tag{4.2.7}
\left\{ s_{d-1} \lp \mathcal{C}'_{\max} \rp < k \right\} = \{Z'_{\geq k} = 0 \}.
\]
This implies that
\[ \label{eq: 4.2.8} \tag{4.2.8}
\Pr \lp s_{d-1} \lp \mathcal{C}_{\max} \rp < k \rp \leq \Pr \lp s_{d-1} \lp \mathcal{C}'_{\max} \rp < k \rp = \Pr \lp Z'_{\geq k} = 0 \rp.
\]
The remaining part is to take $k = c \log n$ and show that the right-most probability term in (\ref{eq: 4.2.8}) tends to $0$ as $n \to \infty$ by using the second moment method. Because the remaining part of the proof is identical to that of \cite[Theorem 4.5]{Hofstad:2017}, we omit the detail.

\end{proof}

\subsection{Proof of Theorem \ref{thm: vertex}}

The result for the subcritical regime $\lambda < 1/d$ immediately follows Theorem~\ref{thm: subcritical regime} because the number of vertices is bounded by the number of $(d-1)$-simplices up to a constant. 

In the supercritical regime, we are going to analyze the number of vertices in $\mathcal{C}_{\max}$ by means of the process $\lp s_0 (\mathcal{X}_k\rp_{k \geq 0}$, where $(\mathcal{X}_k)_{k \geq 0}$ denotes the sequence of simplicial complexes obtained by the breadth-first exploration of $\mathcal{C}_{\max}$ starting at some $\sigma \in S_{d-1}(\mathcal{C}_{\max})$. We would like to remark that a unique $d$-dimensional giant component exist in $(X_n)_{n \geq 1}$ when $\frac{1}{d} < \lambda < \frac{1}{d-1}$. Unlike the uniqueness and the concentration of the size of the giant component, the property \eqref{thm eq: vertex supercritical C_max} is an ``increasing property'', so to speak. More precisely, if $\lambda > \lambda'$ and $X'_n \sim \LM_d(n, \lambda'/n)$ satisfies \eqref{thm eq: vertex supercritical C_max}, then so does $X_n \sim \LM_d(n, \lambda/n)$ because $X_n$ and $X_n'$ can be coupled appropriately so that $X_n' \subseteq X_n$, which implies that the number of vertices in the largest component in $(X'_n)_{n \geq 1}$ is smaller than or equal to that in $(X_n)_{n \geq 1}$ under such an appropriate coupling. Therefore, it suffices to prove \eqref{thm eq: vertex supercritical C_max} for $\frac{1}{d} < \lambda < \frac{1}{d-1}$.

Let $X_n \sim \LM_d(n, \lambda/n)$ with $\frac{1}{d} < \lambda < \frac{1}{d-1}$ and choose $\sigma \in S_{d-1}(\mathcal{C}_{\max})$. Recall that for any $\e > 0$, there exists $\delta > 0$ such that
\[
\Pr \lp s_{d-1} \lp \mathcal{C}_{\max} \rp \geq \delta n^d \rp \geq 1 - \e
\]
for all large enough $n$. In particular, the breadth-first exploration of $\mathcal{C}(\sigma) = \mathcal{C}_{\max}$ does not terminate at least until step $\delta n^d$ with probability $\geq 1 - \e$. We restrict our attention to this event. Let $(\mathcal{X}_k)_{k \geq 0}$ be the sequence of simplicial complexes obtained by the breadth-first exploration. Then, every Step $k$, the number of newly appeared forward $d$-simplices $F_{k, d}$ satisfies the following (Lemma~\ref{lemma: forward estimate}): conditional on $\mathcal{X}_{k-1}$,
\[ \label{eq: 4.3.1} \tag{4.3.1}
F_{k,d} \sim \mathsf{Bin} \lp n - s_0(\mathcal{X}_{k-1}), \frac{\lambda}{n} \rp.
\]
On the other hand, the number of vertices in $\mathcal{X}_k$ satisfies the following:
\[ \label{eq: 4.3.2} \tag{4.3.2}
s_0(\mathcal{X}_k) = s_0(\mathcal{X}_{k-1}) + F_{k, d}
\]
with $s_0(\mathcal{X}_0) = d$. For brevity, let us write $V_k := s_0(\mathcal{X}_k)$ for each $k \geq 0$. Then, \eqref{eq: 4.3.1} and \eqref{eq: 4.3.2} gives us the following: conditional on $V_{k-1}$, 
\[ \label{eq: 4.3.3} \tag{4.3.3}
V_k - V_{k-1} \sim \mathsf{Bin} \lp n - V_{k-1}, \frac{\lambda}{n} \rp
\]

Define the scaled process $v_n(t)$ as
\[ \label{eq: 4.3.4} \tag{4.3.4}
v_n(t) := \frac{V \lfloor tn \rfloor}{n} \quad t > 0.
\]
We are going to investigate the behavior of this process. Let us write $W_k := V_k - V_{k-1}$ for each $k \geq 1$. Then, we have
\[ \label{eq: 4.3.5} \tag{4.3.5}
v_n(t) = \frac{1}{n} \sum_{k=1}^{\lfloor t n \rfloor} W_k + \frac{1}{n} V_0.
\]
Let
\[ \label{eq: 4.3.6} \tag{4.3.6}
m_n^v(t) := \frac{1}{n} \sum_{k=1}^{\lfloor tn \rfloor} \lp W_k - \E \lb W_k \: | \: V_{k-1} \rb \rp 
\]
and
\[ \label{eq: 4.3.7} \tag{4.3.7}
\delta_n^v(t) := \frac{1}{n} \sum_{k=1}^{\lfloor tn \rfloor} \E \lb W_k \: | \: V_{k-1} \rb + \frac{V_0}{n}.
\]
This gives us 
\[
v_n(t) = m_n^v(t) + \delta_n^v(t). 
\]
Let us first analyze the martingale term $m_n^v(t)$. Since $W_k \sim \mathsf{Bin}(n - V_{k-1}, \lambda/n)$, conditional on $V_{k-1}$, we can estimate its quadratic variation as follows:
\[ \label{eq: 4.3.8} \tag{4.3.8}
\begin{aligned}
\langle m_n^v \rangle_T = \frac{1}{n^2} \sum_{k=1}^{\lfloor T n \rfloor} \V \lb W_k \: | \: V_{k-1} \rb  \leq \frac{1}{n^2} \sum_{k=1}^{\lfloor Tn \rfloor} \E \lb W_k^2 \: | \: V_{k-1} \rb & \leq \frac{1}{n^2} \sum_{k=1}^{\lfloor Tn \rfloor} \lp n \frac{\lambda}{n} \lp 1 - \frac{\lambda}{n} \rp + \lambda^2 \rp \\
& = O \lp \frac{1}{n} \rp
\end{aligned}
\]
for any fixed constant $T > 0$. Here, the constant suppressed in the last expression depends on $T$ and $\lambda$. Hence, by the Doob inequality (\cite[Theorem 3.8]{Karatzas;Shreve:1998}) and the Burkholder-Davis-Gundy inequality (\cite[Theorem 3.28]{Karatzas;Shreve:1998}), we obtain the following: for any $\e' > 0$, 
\[ \label{eq: 4.3.9} \tag{4.3.9}
\lim_{n \to \infty} \Pr \lp \sup_{t \in [0, T]} |m_n^v(t)| > \e' \: | \: s_{d-1}\lp \mathcal{C}_{\max} \rp \geq \delta n^d \rp = 0.
\]
On the other hand, the drift term $\delta^v_n(t)$ can be written as
\[ \label{eq: 4.3.10} \tag{4.3.10}
\delta_n^v(t) = \frac{1}{n} \sum_{k=1}^{\lfloor t n \rfloor} (n - V_{k-1}) \frac{\lambda}{n} + \frac{d}{n} = \lambda \lp t - \frac{1}{n} \sum_{k=1}^{\lfloor tn \rfloor} \frac{V_{k-1}}{n} \rp + O \lp \frac{1}{n} \rp.
\]
Note that the function $s \mapsto n^{-1} V_{\lfloor s \rfloor}$ is an increasing step function whose jumps take place only possibly at $s \in \Z$ on $[0, tn]$ almost surely. This implies that the function is Riemann integrable and we can write the right-most expression of the above as
\[
\lambda \lp t - \frac{1}{n}\int_{s = 0}^{\lfloor tn \rfloor -1} \frac{V_{\lfloor s \rfloor}}{n} \d s \rp + O \lp \frac{1}{n} \rp.
\]
Take $s = nu$, then the above can be written as
\[
\lambda \lp t -  \int_{u = 0}^{\frac{\lfloor tn \rfloor -1}{n}} \frac{V_{\lfloor nu \rfloor}}{n} \d u \rp + O \lp \frac{1}{n} \rp,
\]
which is equal to
\[ \label{eq: 4.3.11} \tag{4.3.11}
\lambda \lp t - \int_{u=0}^t v_n(u) \d u + \int_{u = \frac{\lfloor tn \rfloor -1}{n}}^t v_n(u) \d u \rp + O \lp \frac{1}{n} \rp.
\]
Since $v_n(t) \leq 1$ for all $t > 0$, we can write
\[ \label{eq: 4.3.12} \tag{4.3.12}
\delta_n^v(t) = \lambda \lp t - \int_{u=0}^t m_n^v(u) \d u + \int_{u=0}^t \delta_n^v(u) \d u + r_n \rp + a_n,
\]
where $r_n$ is a sequence of non-negative random variables that satisfy $|r_n| \leq b_n$ for some deterministic sequence $b_n = O(n^{-1})$ and $a_n$ is a deterministic sequence of order $O(n^{-1})$.

On the other hand, let $\delta(t)$ be the solution to the following equation:
\[ \label{eq: 4.3.13} \tag{4.3.13}
\delta(t) = \lambda \lp t - \int_{u= 0}^t \delta(u) \d u \rp,
\]
which is explicitly given by
\[ \label{eq: 4.3.14} \tag{4.3.14}
\delta(t) = 1 - e^{-\lambda t}.
\]
We claim that, conditional on $s_{d-1} \lp \mathcal{C}_{\max} \rp \geq \delta n^d$,  for any fixed $T > 0$, 
\[ \label{eq: 4.3.15} \tag{4.3.15}
\sup_{t \in [0, T]} \left| v_n(t) - \delta(t) \right|  \overset{\Pr}{\to} 0
\]
as $n \to \infty$. Combining \eqref{eq: 4.3.12} and \eqref{eq: 4.3.13}, we obtain
\[ \label{eq: 4.3.16} \tag{4.3.16}
\begin{aligned}
\left| \delta_n^v(t) - \delta(t) \right| & = \left| -\lambda \lp \int_{u=0}^t \lp \delta_n^v(u) - \delta(u) \rp \d u - \int_{u=0}^t m_n^v(u) \d u  + r_n \rp + a_n \right| \\
& \leq \lambda \int_{u=0}^t \left| \delta_n^v(u) - \delta(u) \right| \d u + \int_{u=0}^t |m_n^v(u)| \d u + c_n,
\end{aligned}
\]
where $c_n := |a_n| + |b_n| = O(n^{-1})$. For a given $T > 0$, let us write $M_n := \sup_{t \in [0, T]} |m_n^v(t)|$, then we have
\[ \label{eq: 4.3.17} \tag{4.3.17}
|\delta_n^v(t) - \delta(t)| \leq  \lambda \int_{u=0}^t \left| \delta_n^v(u) - \delta(u) \right| \d u + T M_n + c_n.
\]
By the Gronwall inequality (\cite[Theorem 1.10]{Tao:2006}), we obtain
\[ \label{eq: 4.3.18} \tag{4.3.18}
\sup_{t \in [0, T]} |\delta_n^v(t) - \delta(t)| \leq \lp T M_n + c_n \rp e^{\lambda T}.
\]
By \eqref{eq: 4.3.9}, $M_n \overset{\Pr}{\to} 0$ as $n \to \infty$, conditional on $s_{d-1}\lp \mathcal{C}_{\max} \rp \geq \delta n^d$. This, along with $c_n \to 0$, implies that the right-hand side of \eqref{eq: 4.3.18} converges to $0$ in probability, conditional on $s_{d-1} \lp \mathcal{C}_{\max} \rp \geq \delta n^d$, i.e.,
\[ \label{eq: 4.3.19} \tag{4.3.19}
\sup_{t \in [0,T]} |\delta_n^v(t) - \delta(t)| \overset{\Pr}{\to} 0
\]
as $n \to \infty$, conditional on $s_{d-1} \lp \mathcal{C}_{\max} \rp \geq \delta n^d$.

Since $v_n(t) = m_n^v(t) + \delta_n^v(t)$, we have
\[ \label{eq: 4.3.20} \tag{4.3.20}
\sup_{t \in [0, T]} |v_n(t) - \delta(t)| \leq \sup_{t \in [0, T]} |\delta^v_n(t) - \delta(t)| + \sup_{t \in [0, T]} |m_n^v(t)|.
\]
Combining \eqref{eq: 4.3.9} and \eqref{eq: 4.3.19}, we obtain that
\[
\sup_{t \in [0, T]} |v_n(t) - \delta(t)| \overset{\Pr}{\to} 0,
\]
conditional on $s_{d-1} \lp \mathcal{C}_{\max} \rp \geq \delta n^d$, as $n \to \infty$.

Now, we are ready to complete the proof of Theorem~\ref{thm: vertex}. For any given $\e > 0$, choose $T > 0$ large enough so that $e^{-\lambda T} < \e/2$. Note that $s_0(\mathcal{C}_{\max}) \geq n v_n(t)$ for every $t \in [0, T]$. Thus, we have
\[
\begin{aligned}
\Pr \lp \frac{s_0 \lp \mathcal{C}_{\max} \rp}{n} \geq 1 - \e \rp & = \Pr \lp \frac{s_0 \lp \mathcal{C}_{\max} \rp}{n} \geq 1 - \e \: | \: s_{d-1} \lp \mathcal{C}_{\max} \rp \geq \delta n^d \rp \Pr \lp s_{d-1} \lp \mathcal{C}_{\max} \rp \geq \delta n^d \rp \\
& \geq \Pr \lp v_n(T) \geq 1 - \e \: | \: s_{d-1} \lp \mathcal{C}_{\max} \rp \geq \delta n^d \rp \Pr \lp s_{d-1} \lp \mathcal{C}_{\max} \rp \geq \delta n^d \rp.
\end{aligned}
\]
By \eqref{eq: 4.3.20}, we have
\[ 
\begin{aligned}
\Pr \lp v_n(T) \geq 1 - \e \: | \: s_{d-1} \lp \mathcal{C}_{\max} \rp \geq \delta n^d \rp & = \Pr \lp v_n(T) - (1 - e^{-\lambda T}) \geq \e - e^{-\lambda T} \: | \: s_{d-1} \lp \mathcal{C}_{\max} \rp \geq \delta n^d \rp \\
& \leq \Pr \lp v_n(T) - (1 - e^{-\lambda T}) \geq \frac{\e}{2} \: | \: s_{d-1} \lp \mathcal{C}_{\max} \rp \geq \delta n^d \rp \\
& \to 0
\end{aligned}
\]
as $n \to \infty$. Since $\Pr \lp s_{d-1} \lp \mathcal{C}_{\max} \rp \geq \delta n^d \rp \to 1$ as $n \to \infty$. This completes the proof of the theorem.


\bibliographystyle{amsalpha}
\bibliography{LWC_simplicial}

\appendix

\section{The metric space of rooted simplicial complexes}
\label{appendix: formulation of LWC}


\subsection{Construction of the metric}

Recall the definitions and notions introduced in Section~\ref{subsec: intro simplicial complex} and Secion~\ref{subsec: LWC}. Recall that $\mathcal{S}_{d-1}$ is the space of isomorphism classes $(d-1)$-rooted simplicial complexes locally finite above dimension $(d-1)$. Also, recall that we define, for any two $(d-1)$-rooted simplicial complexes $(X_1, \sigma_1), (X_2, \sigma_2) \in \mathcal{S}_{d-1}$, 
\[ \label{eq: A.1} \tag{A.1}
d_{\mathcal{S}_{d-1}} \lp (X_1, \sigma_1), (X_2, \sigma_2) \rp := \frac{1}{1 + R^*},
\]
We are going to prove that the space $(\mathcal{S}_{d-1}, d_{\mathcal{S}_{d-1}})$ is a Polish space.

\begin{prop}
\label{prop: appendix the metric is well-defined}
The function $d_{\mathcal{S}_{d-1}}$ is well-defined on $\mathcal{S}_{d-1} \times \mathcal{S}_{d-1}$ and it is a metric on $\mathcal{S}_{d-1}$.
\end{prop}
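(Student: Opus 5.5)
The plan is to verify the metric axioms directly from the definition of $R^*$. The one property that needs a real argument is that distance zero forces isomorphism; everything else is bookkeeping with the nested balls $B_X(\sigma;r)$.

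First, well-definedness. I will check that $R^*$ depends only on isomorphism classes. If $(X_1,\sigma_1)\cong(X_1',\sigma_1')$ via $\phi$, then $\phi$ maps $B_{X_1}(\sigma_1;r)$ onto $B_{X_1'}(\sigma_1';r)$ for every $r$. This holds by induction on $r$: the recursive definition \eqref{def: boundary of nbd 2.1.1} is phrased purely in terms of simplices and containment, and $\phi$ preserves both. Hence the set of radii in the supremum is unchanged. That set always contains $R=0$, since both balls are $Q(\sigma_i)$, the full simplex on $d$ vertices, and these are isomorphic. So $R^*\in[0,\infty]$, and $d_{\mathcal{S}_{d-1}}\in[0,1]$ with the convention $1/(1+\infty)=0$. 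I will also note that the radius sets are downward closed. An isomorphism $B_{X_1}(\sigma_1;R)\cong B_{X_2}(\sigma_2;R)$ preserving roots restricts to an isomorphism of the radius-$r$ balls for $r\le R$, because the ball of radius $r$ inside $B_X(\sigma;R)$, computed intrinsically, coincides with $B_X(\sigma;r)$. This uses the fact that every $d$-simplex in $\mathcal{J}_\ell$ for $\ell<R$ already lies in $B_X(\sigma;R)$. Symmetry is immediate from the definition.

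For the triangle inequality I will prove the stronger ultrametric bound $d(x,z)\le\max\{d(x,y),d(y,z)\}$. Let $R_1$ and $R_2$ be radii at which $x,y$ and $y,z$ respectively have isomorphic balls. By downward closedness, both pairs are isomorphic at radius $\min(R_1,R_2)$. Composing the two isomorphisms gives an isomorphism of the $x$- and $z$-balls, so $R^*(x,z)\ge\min(R^*(x,y),R^*(y,z))$, which is exactly the ultrametric inequality.

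The main obstacle is showing that $d=0$ implies $(X_1,\sigma_1)\cong(X_2,\sigma_2)$. Here $R^*=\infty$, so for every $r$ the set $I_r$ of root-preserving isomorphisms $B_{X_1}(\sigma_1;r)\to B_{X_2}(\sigma_2;r)$ is nonempty. Local finiteness above dimension $d-1$ makes each ball a finite complex, as remarked after \eqref{def: boundary of nbd 2.1.1}, so each $I_r$ is finite. Restriction maps $I_{r+1}\to I_r$ by downward closedness. König's lemma, applied to this inverse system of finite nonempty sets, gives a compatible sequence $(\phi_r)$. Its union $\phi$ is a bijection between $S_0(\bigcup_r B_{X_1}(\sigma_1;r))$ and the corresponding vertex set for $X_2$, and it preserves all simplices. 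The delicate point is that $X_i$ may contain simplices outside $\bigcup_r B_{X_i}(\sigma_i;r)$, for example other $d$-dimensional components or isolated lower simplices. To handle this I will interpret $\mathcal{S}_{d-1}$, as is implicit in the local-weak convergence framework, as identifying a rooted complex with the union of the balls around its root, i.e.\ the $d$-dimensional component of the root together with its faces. With that convention, $\phi$ is the required isomorphism, and $d_{\mathcal{S}_{d-1}}$ is a genuine metric.
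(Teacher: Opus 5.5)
Your proposal is correct and follows the paper's proof. Well-definedness comes from balls being invariant under isomorphism, the triangle inequality from the ultrametric bound $R^*(x,z)\ge\min\{R^*(x,y),R^*(y,z)\}$, and the zero-distance case from a compactness argument over the finitely many root-preserving isomorphisms of each ball; your K\"onig's lemma step is exactly what the paper's nested-subsequence construction in Lemma~\ref{lemma: appendix neighbourhood determines the whole} does by hand. Your restriction to the root's $d$-dimensional component matches that lemma's hypothesis that the complexes are $d$-dimensional-connected, and your explicit downward-closedness check supplies a step the paper uses without comment.
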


First, we show that the neighbourhoods of finite radii completely determines the whole isomorphism class.

\begin{lemma}
\label{lemma: appendix neighbourhood determines the whole}
Let $(X_1, \sigma_1)$ and $(X_2, \sigma_2)$ be two $d$-dimensional-connected $(d-1)$-rooted simplicial complexes locally finite above dimension $(d-1)$. Assume that $B_{X_1}(\sigma_1; r) \cong B_{X_2}(\sigma_2; r)$ for all $r \geq 0$. Then $(X_1, \sigma_1) \cong (X_2, \sigma_2)$.
\end{lemma}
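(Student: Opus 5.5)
This is a König's lemma (compactness) argument, the same one used for rooted graphs. First I reduce to vertex sets. Since $X_i$ is $d$-dimensionally connected and locally finite above dimension $d-1$, every simplex of $X_i$ that matters lies in some $B_{X_i}(\sigma_i; r)$, and the balls are nested: $B(\sigma_i;r) \subseteq B(\sigma_i;r+1)$. Each ball with finite radius is a finite simplicial complex, by the remark after \eqref{def: boundary of nbd 2.1.1}. So $S_0(X_i) = \bigcup_r S_0(B_{X_i}(\sigma_i;r))$ is countable. I will assume, as the statement implicitly does, that $X_i$ is the union of its balls, i.e.\ every simplex lies in the closure of some $d$-simplex reachable from the root, or is a face of the root.

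For each $r$, let $\Phi_r$ be the set of root-preserving isomorphisms $B_{X_1}(\sigma_1;r) \to B_{X_2}(\sigma_2;r)$, viewed as vertex bijections. By hypothesis $\Phi_r \neq \emptyset$, and $\Phi_r$ is finite because the balls are finite. The key observation is a restriction property. If $\phi \in \Phi_{r+1}$, then $\phi$ maps $B_{X_1}(\sigma_1;r)$ onto $B_{X_2}(\sigma_2;r)$. The reason is that the ball of radius $r$ is defined intrinsically from the root by the iterative rule \eqref{def: boundary of nbd 2.1.1}, using only incidence between $(d-1)$- and $d$-simplices inside the complex. That rule is invariant under isomorphism, and applying it inside $B(\sigma;r+1)$ for $\ell \le r$ gives the same result as applying it in $X$. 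To justify this I will check by induction on $\ell$ that $B_{B_X(\sigma;r+1)}(\sigma;\ell) = B_X(\sigma;\ell)$ for $\ell \le r+1$. The point to verify is that every $d$-simplex of $X$ containing a $(d-1)$-simplex of $B_X(\sigma;\ell)$ with $\ell\le r$ already lies in $B_X(\sigma;\ell+1) \subseteq B_X(\sigma;r+1)$. Hence restriction gives maps $\Phi_{r+1}\to\Phi_r$.

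Next I apply König's lemma to the inverse system of finite nonempty sets $(\Phi_r)$ with these restriction maps. Concretely, let $\Phi_r^\infty$ be the set of $\phi\in\Phi_r$ that extend to elements of $\Phi_R$ for every $R \ge r$. Each $\Phi_r^\infty$ is nonempty, because $\Phi_r^\infty$ is the intersection of the decreasing sequence of finite nonempty images of $\Phi_R$ in $\Phi_r$. Also, restriction maps $\Phi_{r+1}^\infty$ onto $\Phi_r^\infty$. I then choose a compatible sequence $\phi_r\in\Phi_r^\infty$ with $\phi_{r+1}|_{B_{X_1}(\sigma_1;r)}=\phi_r$, and define $\phi=\bigcup_r \phi_r$ on $S_0(X_1)$.

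Finally I verify that $\phi$ is an isomorphism. It is well defined and injective because the $\phi_r$ are compatible injections. It is surjective onto $S_0(X_2)$ because each $\phi_r$ is onto $S_0(B_{X_2}(\sigma_2;r))$ and these sets exhaust $S_0(X_2)$. It preserves simplices in both directions: any finite simplex $\{v_0,\dots,v_k\}$ of $X_1$ lies in some ball $B_{X_1}(\sigma_1;r)$, where $\phi_r$ is a simplicial isomorphism, and the same holds for $\phi^{-1}$. It maps $\sigma_1$ to $\sigma_2$ because $\phi_0$ does.

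I expect the main obstacle to be the restriction step: showing that balls are intrinsic and that an isomorphism of $(r+1)$-balls maps $r$-balls onto $r$-balls. A secondary obstacle is the subtlety of lower-dimensional simplices not covered by any ball, which I handle through the connectedness and purity convention above.
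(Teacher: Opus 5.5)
Your proposal is correct and uses the same compactness (König's lemma) argument as the paper; the paper phrases it as nested infinite subsequences of the given isomorphisms $\phi_r$, each fixed on successively larger balls, rather than as an inverse limit of the finite nonempty sets $\Phi_r$. Your proposal also makes explicit two steps the paper leaves implicit: that an isomorphism of $(r+1)$-balls restricts to an isomorphism of $r$-balls, and that each simplex itself, not just its vertex set, lies in some ball.
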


\begin{proof}
Fix $r \geq 0$, and let $\phi_r : B_{X_1}(\sigma_1; r) \to B_{X_2}(\sigma_2; r)$ be an isomorphism, i.e., $\phi_r$ is a bijection between $S_0 \lp B_{X_1}(\sigma_1 r) \rp$ and $S_0 \lp B_{X_2}(\sigma_2; r) \rp$, and it preserves all simplices and the root. Let us extend it to $\psi_r$ defined on the whole simplical complex $(X_1, \sigma_1)$ by setting
\[ \label{eq: A.2} \tag{A.2}
\psi_r (v) := \begin{cases}
\phi_r(v) & \text{ if } v \in S_0 \lp B_{X_1}(\sigma_1; r) \rp, \\
v_2 & \text{ elsewhere,}
\end{cases}
\]
where $v_2$ is a fixed arbitrary vertex in $\sigma_2$.

For brevity, let $V_r^{(1)} := S_0 \lp B_{X_1}(\sigma_1; r) \rp$. Note that $V_0^{(1)} = \sigma_1 = \{v_0, \dots, v_{d-1} \}$ and 
\[
\left\{ \psi_r |_{V_0^{(1)}}(v_1), \dots, \psi_r |_{V_0^{(1)}} (v_{d-1}) \right\} = \sigma_2 \; \text{ for every } r.
\]
However, the number of distinct isomorphisms between $B_{X_1}(\sigma_1; 0)$ and $B_{X_2}(\sigma_2; 0)$ is $(d-1)!$. This means that there is at least one isomorphism repeated infinitely many times across all values of $r \geq 0$. Let $\phi_0^{'}$ denote such an isomorphism repeated infinitely many times, and let $\mathcal{N}_0$ denote the set of the values of $r$ for which
\[
\psi_r |_{V_0^{(1)}} = \phi_0^{'}.
\]

Next, note that $\psi_r |_{V_1^{(1)}}$ is an isomorphism between $B_{X_1}(\sigma_1; 1)$ and $B_{X_2}(\sigma_2; 1)$ for every $r \in \mathcal{N}_1$. Since $X$ is locally finite above dimension $(d-1)$, the neighbourhood $B_{X_1}(\sigma_1; 1)$ is a finite simplicial complex; hence, again, the number of distinct isomorphisms between $B_{X_1}(\sigma_1; 1)$ and $B_{X_2}(\sigma_2; 2)$ is finite, and there exists at least one isomorphism in the sequence $\lp \psi_r |_{V_1^{(1)}} \rp_{r \in \mathcal{N}_0}$ repeated infinitely many times, say, $\phi_1^{'}$. We would like to emphasise that $\phi_1^{'}$ and $\phi_0^{'}$ agree on $V_0^{(1)}$ because $\phi_1^{'}$ is obtained from the sequence indexed by $r \in \mathcal{N}_0$. Let $\mathcal{N}_1$ denote the set of the values of $r \in \mathcal{N}_0$ for which
\[
\psi_r |_{V_1^{(1)}} = \phi_1^{'}.
\]

Repeating this procedure, we can obtain a nested sequence of infinite sets $\mathcal{N}_1 \supseteq \mathcal{N}_2 \supseteq \cdots$ and corresponding sequence $\lp \psi_r |_{V_0^{(\ell)}} \rp_{r \in \mathcal{N}_{\ell}}$ for each $\ell \geq 0$. For each $\ell \geq 0$, take $\psi_{\ell}'$ to be the first element of the sequence $\lp \psi_r \rp_{r \in \mathcal{N}_{\ell}}$. Then it follows that $\psi_{\ell}^{'}(v) = \phi_{d-1}^{'} (v)$ for all $\ell \geq d-1$ and all $v \in V_{d-1}^{(1)}$. Let $U_0 := V_0^{(1)} = \sigma_1 = \{v_0, \dots, v_{d-1}\}$ and $U_k : = V_k^{(1)} \setminus V_{k-1}^{(1)}$ for each $k \geq 1$. Since the simplicial complex $X$ is $(k+1)$-connected, we can write $S_0(X) = \cup_{k \geq 0} U_k$, which is a disjoint union.

It follows that the sequence $(\psi_{\ell}^{'})_{\ell \geq 0}$ converges pointwise to
\[
\psi(v) := \lim_{\ell \to \infty} \psi_{\ell}^{'}(v) := \phi_k^{'}(v), \text{ where } v \in U_k.
\]
We claim that the pointwise limit $\psi$ is an isomorphism between $(X_1, \sigma_1)$ and $(X_2, \sigma_2)$. The map $\psi$ is obviously bijective. Let $m \geq 1$ and $u_0, \dots, u_m \in S_0(X_1)$. Write
\[
M := \max_{j = 0, \dots, m} \mathrm{dist}_{X_1}(\sigma_1, u_j),
\]
then $u_0, \dots, u_m \in V_M^{(1)}$. Since $\phi_M^{'}$ is an isomorphism between $B_{X_1}(\sigma_1; M)$ and $B_{X_2}(\sigma_2; M)$, it follows that $\phi_M^{'}(u_0), \dots, \phi_M^{'}(u_M) \in S_0 \lp B_{X_2}(\sigma_2; M) \rp$, and 
\[
\left\{ \phi_M^{'}(u_0), \dots, \phi_M^{'}(u_m) \right \} \in S_m \lp B_{X_2}(\sigma_2; M) \rp \text{ if and only if } \{u_0, \dots, u_m \} \in S_m \lp B_{X_1}(\sigma_1; M) \rp.
\]
This completes the proof.

\end{proof}

\begin{proof}[Proof of Proposition \ref{prop: appendix the metric is well-defined}, Part 1: being well-defined]
Suppose that $(\hat{X}_1, \hat{\sigma}_1) \cong (X_1, \sigma_1)$ and $(\hat{X}_2, \hat{\sigma}_2) \cong (X_2, \sigma_2)$. Then, for any $r \geq 0$, $B_{\hat{X}_1}(\hat{\sigma}_1; r) \cong B_{\hat{X}_2}(\hat{\sigma}_2; r)$ if and only if $B_{X_1}(\sigma_1; r) \cong B_{X_2}(\sigma_2; r)$. Therefore, the function $d_{\mathcal{S}_k}$ is well-defined independent of the choice of representatives from equivalence classes of simplicial complex isomorphisms.
\end{proof}

\begin{proof}[Proof of Proposition \ref{prop: appendix the metric is well-defined}, Part 2: being a metric] First, assume that $d_{\mathcal{S}_k} \lp (X_1, \sigma_1), (X_2, \sigma_2) \rp = 0$. Then it follows that $B_{X_1}(\sigma_1; r) \cong B_{X_2}(\sigma_2; r)$ for all $r \geq 0$. By Lemma \ref{lemma: appendix neighbourhood determines the whole}, it holds that $(X_1, \sigma_1) \cong (X_2, \sigma_2)$.

Symmetry is obvious.

As for transitivity, consider three elements $(X_1, \sigma_1), (X_2, \sigma_2)$, and $(X_3, \sigma_3)$ in $\mathcal{S}_k$. For $i, j \in \{1, \dots, 3\}$, let
\[
R_{ij} := \sup \left\{ r \geq 0 \: : \: B_{X_i}(\sigma_i; r) \cong B_{X_j}(\sigma_j; r) \right\}.
\]
In particular, $B_{X_1}(\sigma_1; r) \cong B_{X_3}(\sigma_3; r)$ for $r \leq R_{13}$ and $B_{X_2}(\sigma_2; r) \cong B_{X_3}(\sigma_3; r)$ for $r \leq R_{23}$. Take $R := \min \{R_{13}, R_{23} \}$, then $B_{X_1}(\sigma_1; r) \cong B_{X_2}(\sigma_2; r)$ for $r \leq R$, which implies that
\[
R_{12} \geq R = \min \{R_{13}, R_{23} \}.
\]
Thus, it follows that
\[
\frac{1}{1 + R_{12}} \leq \max \left\{ \frac{1}{1 + R_{13}}, \frac{1}{1 + R_{23}} \right\}.
\]
This completes the proof.
\end{proof}

\begin{remark}
In fact, the metric $d_{\mathcal{S}_{d-1}}$ is a ultrametric on $\mathcal{S}_{d-1}$.
\end{remark}

\subsection{Separability of the space}

\begin{prop}
\label{prop: separability}
The space $(\mathcal{S}_{d-1}, d_{\mathcal{S}_{d-1}})$ is separable, i.e., it has a countable dense subset.
\end{prop}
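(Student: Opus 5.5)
The natural countable dense set consists of the isomorphism classes of \emph{finite} $(d-1)$-rooted simplicial complexes. The plan is to show two things: this set is countable, and every element of $\mathcal{S}_{d-1}$ is a limit of such finite complexes in $d_{\mathcal{S}_{d-1}}$.

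\textbf{Countability.} Every finite simplicial complex with $N$ vertices is isomorphic to one on the vertex set $[N]$. Such a complex is a subset of the power set of $[N]$, so there are finitely many for each $N$. Choosing a root $(d-1)$-simplex adds only finitely many options. Taking the union over $N\in\N$, the collection $\mathcal{S}^{\mathrm{fin}}_{d-1}$ of isomorphism classes of finite $(d-1)$-rooted complexes (of dimension at most $d$) is countable. Each such complex is trivially locally finite above dimension $d-1$, so $\mathcal{S}^{\mathrm{fin}}_{d-1}\subseteq \mathcal{S}_{d-1}$.

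\textbf{Density.} Fix $(X,\sigma)\in\mathcal{S}_{d-1}$ and $r\in\N$. Since $X$ is locally finite above dimension $d-1$, the remark following \eqref{def: boundary of nbd 2.1.1} gives that $B_X(\sigma;r)$ is a finite simplicial complex. Set $Y_r:=B_X(\sigma;r)$, rooted at $\sigma$, which lies in $\mathcal{S}^{\mathrm{fin}}_{d-1}$. The key claim is that
\[
B_{Y_r}(\sigma;s)=B_X(\sigma;s)\quad\text{for all } s\le r.
\]
From this claim, $R^*\ge r$, so $d_{\mathcal{S}_{d-1}}((X,\sigma),(Y_r,\sigma))\le 1/(1+r)\to 0$.

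\textbf{Proof of the claim (the main point needing care).} The argument is by induction on $s$ using the recursive definition \eqref{def: boundary of nbd 2.1.1}. The base case is $B(\sigma;0)=Q(\sigma)$ in both complexes. For the inductive step, assume $B_{Y_r}(\sigma;s)=B_X(\sigma;s)$ for some $s<r$. The set $\mathcal{J}_s$ consists of the $d$-simplices not in the current ball that contain a $(d-1)$-simplex of the ball.
\begin{itemize}
\item Computed in $X$, every element of $\mathcal{J}_s$ lies in $B_X(\sigma;s+1)\subseteq Y_r$, so it is also an element of $\mathcal{J}_s$ computed in $Y_r$.
\item Conversely, a $d$-simplex of $Y_r$ is a $d$-simplex of $X$.
\end{itemize}
Hence the two sets $\mathcal{J}_s$ coincide. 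Since $Y_r$ and $X$ share the same faces $Q(\cdot)$, the boundaries $\partial B(\sigma;s+1)$ coincide as well, completing the induction.

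This is the only step requiring attention: one must check that truncating at radius $r$ neither loses nor adds $d$-simplices adjacent to the inner balls. The forward inclusion is exactly what the definition of $B_X(\sigma;s+1)$ guarantees. Combining countability with density gives separability.
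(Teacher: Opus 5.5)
Your proposal is correct and follows the same route as the paper: it uses the countable family of finite $(d-1)$-rooted complexes and approximates $(X,\sigma)$ by its truncations $B_X(\sigma;r)$, which lie at distance at most $1/(1+r)$. You also check the two points the paper treats as obvious, namely countability and the identity $B_{B_X(\sigma;r)}(\sigma;s)=B_X(\sigma;s)$ for $s\le r$, and your inductive verification of that identity is sound.
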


Let $\widehat{\mathcal{S}}_{d-1}$ denote the set of all $(d-1)$-rooted finite simplicial complexes, which is obviously countable. For any $(X, \sigma) \in \mathcal{S}_{d-1}$, the neighbourhood $B_X(\sigma; r)$ belongs to $\widehat{\mathcal{S}}_{d-1}$ for any $r \geq 0$. It satisfies that
\[
d_{\mathcal{S}_k} \lp B_X(\sigma; r), (X, \sigma) \rp \leq \frac{1}{1 + r};
\]
hence $B_X(\sigma; r)$ converges to $(X, \sigma)$ as $r \to \infty$ in $\mathcal{S}_k$. This proves that $\mathcal{S}_k$ contains a countable dense subset.

\subsection{Completeness of the space}

\begin{prop}
\label{prop: completeness}
The space $(\mathcal{S}_k, d_{\mathcal{S}_k})$ is complete, i.e., every Cauchy sequence in the space is convergent.
\end{prop}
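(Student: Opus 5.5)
Take a Cauchy sequence $((X_m,\sigma_m))_{m\ge 1}$ in $(\mathcal{S}_{d-1}, d_{\mathcal{S}_{d-1}})$ and build its limit neighbourhood by neighbourhood, using the ultrametric structure. Passing to a subsequence (a Cauchy sequence with a convergent subsequence converges), assume $d_{\mathcal{S}_{d-1}}((X_m,\sigma_m),(X_{m'},\sigma_{m'})) < 1/(1+r)$ for all $m,m'\ge r$. Then for each $r$ the balls $B_{X_m}(\sigma_m;r)$ with $m\ge r$ are pairwise isomorphic as rooted complexes. Call this common finite isomorphism class $H_r$. It is finite because $X_m$ is locally finite above dimension $d-1$, as remarked after (2.1.1).

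\textbf{Compatibility.} Since $B_{B_X(\sigma;r+1)}(\sigma;r)=B_X(\sigma;r)$, the $r$-ball of $H_{r+1}$ (computed from its root) is isomorphic to $H_r$. Fix representatives $(Y_r,\rho_r)$ of $H_r$. Choose isomorphisms $\iota_r$ from $Y_r$ onto $B_{Y_{r+1}}(\rho_{r+1};r)$, mapping roots to roots, and identify $Y_r$ with its image. This gives an increasing chain $Y_0\subseteq Y_1\subseteq\cdots$ in which each $Y_r$ is exactly the $r$-ball of $Y_{r+1}$ about the common root $\rho$. Here ``exactly'' is the point to verify: the inclusion must be as the whole ball, not merely as a subcomplex. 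This is immediate from the choice of $\iota_r$, but it must be checked that the ball of $Y_{r+1}$ does not pick up extra simplices lying on vertices of $Y_r$. Such extra simplices cannot occur, because by the definition via (2.1.1) any simplex of $Y_{r+1}$ that is a face of a $d$-simplex adjacent to a $(d-1)$-simplex in the $(r-1)$-ball already belongs to the $r$-ball.

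\textbf{The limit.} Set $X=\bigcup_r Y_r$. This is a simplicial complex, since it is an increasing union of complexes. Next check that $B_X(\rho;r)=Y_r$. The proof is by induction on $r$ using (2.1.1): the boundary $\partial B_X(\rho;r+1)$ only involves $d$-simplices containing a $(d-1)$-simplex of $B_X(\rho;r)=Y_r$. Any such $d$-simplex lies in some $Y_s$, and then it lies in the $(r+1)$-ball of $Y_s$, which equals $Y_{r+1}$ by the exactness established above. Local finiteness above dimension $d-1$ follows: every $(d-1)$- or $d$-simplex of $X$ lies in some finite $Y_r$, and all its cofaces lie in $Y_{r+1}$. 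Hence $(X,\rho)\in\mathcal{S}_{d-1}$. Finally, for $m\ge r$ we have $B_{X_m}(\sigma_m;r)\cong Y_r=B_X(\rho;r)$, so $d_{\mathcal{S}_{d-1}}((X_m,\sigma_m),(X,\rho))\le 1/(1+r)$, and the subsequence converges to $(X,\rho)$.

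\textbf{Main obstacle.} The delicate step is the exactness of the nested identification, namely that the union's $r$-ball is precisely $Y_r$ and nothing more. One must be careful that lower-dimensional simplices on vertices of $Y_r$ in $Y_{r+1}$ are not double-counted or missed. I would handle this by working directly from the recursive definition (2.1.1), as in the proof of Lemma A.2, rather than by vertex-induced subcomplexes.
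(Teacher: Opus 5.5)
Your proof is correct and is essentially the paper's argument. The paper picks increasing indices $N_r$ beyond which the $r$-balls are all isomorphic, notes that the stabilized balls form a compatible sequence, and glues them into the limit via the consistent vertex relabelings of Lemma~\ref{lemma: compatible sequence}; this is exactly your subsequence-plus-nested-identification construction, and your inductive check via (2.1.1) that $B_X(\rho;r)=Y_r$ supplies the step the paper leaves as ``one can check'' (complete it with the easy reverse inclusion $Y_{r+1}\subseteq B_X(\rho;r+1)$, which holds because $Y_{r+1}$ is its own $(r+1)$-ball and sits inside $X$ with the same $r$-ball). The ``main obstacle'' you flag is not an obstacle: it holds by construction because $\iota_r$ maps onto the entire $r$-ball of $Y_{r+1}$, and simplices of $Y_{r+1}$ spanned by vertices of $Y_r$ but not in $Y_r$ can genuinely exist (so ``cannot occur'' should only be read as ``are not in the $r$-ball''), since balls are not vertex-induced subcomplexes.
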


\begin{lemma}
\label{lemma: compatible sequence}
Let $\lp (X_r, \sigma_r) \rp_{r \geq 0}$ be a sequence of $(k+1)$-connected $k$-rooted simplicial complexes locally finite above dimension $k$. Assume that the sequence $\lp (X_r, \sigma_r) \rp_{r \geq 0}$ is compatible in the sense that $B_{X_s}(\sigma_s; r) \cong (X_r, \sigma_r)$ for all $s \geq r$. Then there exists a $(k+1)$-connected $k$-rooted simplicial complex $(X, \sigma)$ that is locally finite above dimension $k$ and $(X_r, \sigma_r) \cong B_X(\sigma; r)$ for every $r \geq 0$. Moreover, $(X, \sigma)$ is unique up to isomorphisms.

\end{lemma}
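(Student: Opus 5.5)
We build the limit complex $(X,\sigma)$ as a direct limit of the finite complexes $X_r$. The embeddings have to be chosen coherently, which we do by the same compactness (finitely-many-choices) device as in the proof of Lemma~\ref{lemma: appendix neighbourhood determines the whole}.

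\textbf{Step 1: coherent isomorphisms.} For each $s\ge r$, compatibility gives an isomorphism $\phi_{s,r}: (X_r,\sigma_r)\to B_{X_s}(\sigma_s;r)\subseteq X_s$. These need not be coherent, i.e.\ we may have $\phi_{t,s}\circ\phi_{s,r}\neq\phi_{t,r}$. To fix this, observe that each $X_r$ is finite. Indeed, by compatibility $X_r\cong B_{X_s}(\sigma_s;r)$, and a finite-radius ball in a complex that is locally finite above dimension $k$ is finite, as remarked after \eqref{def: boundary of nbd 2.1.1}. Hence $\mathrm{Iso}(X_r, B_{X_{r+1}}(\sigma_{r+1};r))$ is a finite nonempty set. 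Every isomorphism $X_{r+1}\to B_{X_{r+2}}(\sigma_{r+2};r+1)$ restricts to an isomorphism of radius-$r$ balls, because isomorphisms preserve balls. So we may inductively choose $\iota_r: X_r\hookrightarrow X_{r+1}$, an isomorphism onto $B_{X_{r+1}}(\sigma_{r+1};r)$ preserving roots. With these maps, $B_{X_{r+1}}(\sigma_{r+1};r)=\iota_r(X_r)$ for every $r$. Coherence is then automatic, since we use only the chain of consecutive maps $\iota_r$. No K\"onig-type argument is needed, because consecutive compatibility $B_{X_{r+1}}(\sigma_{r+1};r)\cong X_r$ already suffices.

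\textbf{Step 2: define $X$.} Let $X$ be the direct limit $\varinjlim (X_r,\iota_r)$. Its vertex set is the disjoint union of the $S_0(X_r)$ modulo the identifications $v\sim\iota_r(v)$, and a set of vertices is a simplex if it is the image of a simplex of some $X_r$. The root is $\sigma$, the common image of the $\sigma_r$. Subset closure is inherited from the $X_r$. Since $\iota_r(X_r)=B_{X_{r+1}}(\sigma_{r+1};r)$, an induction on $s\ge r$ gives that the image of $X_r$ in $X_s$ is $B_{X_s}(\sigma_s;r)$. The key check is then $B_X(\sigma;r)=$ image of $X_r$. We argue layer by layer through the recursive definition \eqref{def: boundary of nbd 2.1.1}. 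Any $d$-simplex of $X$ containing a $(d-1)$-simplex of the image of $X_{r-1}$ lies in some $X_s$, $s\ge r$. There it contains a $(d-1)$-simplex of $B_{X_s}(\sigma_s;r-1)$, so it lies in $B_{X_s}(\sigma_s;r)$, which is the image of $X_r$. This proves $B_X(\sigma;r)\cong(X_r,\sigma_r)$.

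\textbf{Step 3: properties and uniqueness.} Local finiteness above dimension $k$ holds because any simplex of dimension $\ge k$ lies in some finite ball, and all cofaces relevant to its up-degree lie in the next ball, which is finite. $(k+1)$-connectedness holds because every $k$-simplex of $X$ lies in some $B_X(\sigma;r)$, hence is joined to $\sigma$ by a path. Uniqueness up to isomorphism is exactly Lemma~\ref{lemma: appendix neighbourhood determines the whole}.

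The main obstacle is Step 2's identification of the balls of the limit. We must ensure no simplex added at a later stage $s>r$ attaches to the radius-$(r-1)$ ball and enlarges $B_X(\sigma;r)$. This is precisely where the exact equality $\iota_r(X_r)=B_{X_{r+1}}(\sigma_{r+1};r)$, rather than a mere embedding, is needed.
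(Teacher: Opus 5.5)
Your proof is correct and takes essentially the same approach as the paper. The paper also glues along consecutive isomorphisms $\psi_r\colon X_{r-1}\to B_{X_r}(\sigma_r;r-1)$, realizing your direct limit concretely by relabeling the vertices of each $X_r$ compatibly into $[N_r]$ and setting $S_p(X)=\bigcup_r S_p(X_r')$, and it gets uniqueness from Lemma~\ref{lemma: appendix neighbourhood determines the whole}; your layer-by-layer check that $B_X(\sigma;r)$ equals the image of $X_r$ (plus the easy reverse inclusion, since balls only grow when passing to a larger complex) fills in what the paper leaves as ``one can check''.
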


\begin{proof}
For each $r$, let $N_r : = \left| S_0 \lp B_{X_r}(\sigma_r; r) \rp \right|$ and $V_r := [N_r] = \{1, \dots, N_r\}$.Note that each $N_r$ is finite. In particular, $V_0 = \{0, \dots, k\}$. We are going to construct a sequence of bijections $\phi_r : S_0 \lp B_{X_r}(\sigma_r; r) \rp \to V_r$ recursively as follows. Let $\phi_0$ be an arbitrary bijection between $S_0 \lp B_{X_0} (\sigma_0 ; 0) \rp = \sigma_0 = \{v_{00}, \dots, v_{0k} \}$ and $V_0 = \{0, \dots, k\}$. Let $\psi_r$ be an isomorphism between $(X_{r-1}, \sigma_{r-1})$ and $B_{X_r}(\sigma_r; r-1)$, and let $\eta_r$ be an arbitrary bijection between $S_0 (X_r) \setminus S_0 \lp B_{X_r}(\sigma_r; r-1) \rp$ and $V_r \setminus V_{r-1}$. Define
\[
\begin{aligned}
\phi_r(v) := \begin{cases}
\phi_{r-1} \lp \psi_r^{-1}(v) \rp & \text{ if } v \in S_0 \lp B_{X_r}(\sigma_r; r-1) \rp, \\
\eta_r(v) & \text{ if } v \in S_0(X_r) \setminus S_0 \lp B_{X_r}(\sigma_r; r-1) \rp.
\end{cases}
\end{aligned}
\]
Then $\phi_r$ is a bijection from $S_0(X_r)$ to $V_r$. We further define $(X_r^{'}, \sigma_r^{'}) = \lp \phi_r(X_r), \phi_r(\sigma_r) \rp$, where $\phi_r(X_r)$ is the simplicial complex that consists of the vertex set $\{\phi_r(v) \: : \: v \in S_0(X_r) \}$ and simplex sets $S_p \lp \phi_r(X_r) \rp = \{ \{ \phi_r(u_0), \dots, \phi_r(u_p) \} \: : \: \{u_0, \dots, u_p \} \in S_p(X_r) \}$ for every $p \geq 1$.

Notice that $\sigma_r^{'} = \{0, \dots, k\}$ for every $r \geq 0$. Furthermore, $B_{X_r^{'}}(\sigma_r^{'}; r) \cong B_{X_r}(\sigma_r; r) = (X_r, \sigma_r)$. 

Finally, we are ready to construct $(X, \sigma)$. Set $\sigma := \{0, \dots, k\}$ and $S_p(X) := \cup_{r \geq 0} S_p(X_r^{'})$ for every $p$. Then, one can check $B_X(\sigma; r) \cong B_{X_r^{'}}(\sigma_r^{'}; r) \cong B_{X_r}(\sigma_r; r) = (X_r, \sigma_r)$. And, $(X, \sigma)$ is $(k+1)$-connected and locally finite above dimension $k$. To check its uniqueness, suppose that there is another $k$-rooted simplicial complex $(Y, \rho)$ such that $B_{Y}(\rho; r) \cong B_X(\sigma_r; r) = (X_r, \sigma_r)$ for every $r \geq 0$. Then $B_Y(\rho; r) \cong B_X(\sigma; r)$ for every $r \geq 0$, which implies that $(Y, \rho) \cong (X, \sigma)$ by Lemma \ref{lemma: appendix neighbourhood determines the whole}.

\end{proof}

Now we are ready to prove Proposition \ref{prop: completeness}

\begin{proof}[Proof of Proposition \ref{prop: completeness}]
Let $\lp (X_n, \sigma_n) \rp_{n \geq 1}$ be a Cauchy sequence in $\mathcal{S}_k$---without loss of generality, we consider $(X_n, \sigma_n)$ a representative of the equivalence class of it. Let $\e > 0$, then there exists $N \in \N$ such that $n, m \geq N$ implies that
\[
d_{\mathcal{S}_k} \lp (X_n, \sigma_n), (X_m, \sigma_m) \rp < \e,
\]
which implies that
\[
B_{X_n}(\sigma_n; r) \cong B_{X_m}(\sigma_m; r)
\]
for all $r \leq 1/e - 1$ whenever $n,m \geq N$. Equivalently, for every $r \geq 0$, there exists $N_r \in \N$ such that, for all $n \geq N_r$,
\[
B_{X_n}(\sigma_n; r) \cong B_{X_{N_r}}(\sigma_{N_r}; r).
\]
Indeed, we can select $N_r$ so that $r \mapsto N_r$ is strictly increasing. Let $(X'_r, \sigma'_r) = B_{X_{N_r}}(\sigma_{N_r}; r)$, then the sequence $\lp (X'_r, \sigma'_r) \rp_{r \geq 0}$ forms a compatible sequence. By Lemma \ref{lemma: compatible sequence}, there exists a $k$-rooted simplicial complex $(X, \sigma)$ locally finite above dimension $k$, satisfying $B_X(\sigma; r) \cong (X'_r, \sigma'_r)$ for every $r$. On the other hand, it also holds that
\[
B_{X_n}(\sigma_n; r) \cong B_{X_{N_r}}(\sigma_{N_r}; r) = (X'_r, \sigma'_r)
\]
for $n \geq N_r$. Thus, for all $n \geq N_r$,
\[
d_{\mathcal{S}_k} \lp (X, \sigma), (X_n, \sigma_n) \rp \leq \frac{1}{1 + r} \leq \e.
\]
Since the choice of $\e$ was arbitrary, we conclude that $(X_n, \sigma_n)$ converges to $(X, \sigma)$ in $\mathcal{S}_k$. This completes the proof.
\end{proof}

Combining Proposition \ref{prop: separability} and \ref{prop: completeness}, we conclude that the metric space $(\mathcal{S}_k, d_{\mathcal{S}_k})$ is a Polish space.

\section{Proof of Propositions in Section~\ref{subsec: LWC and connected components}}
\label{appendix: properties of LWC}

First, we prove Proposition~\ref{prop: number of components}. The following proof is an extension of that presented in \cite[Corollary 2.21]{Hofstad:2024}.

\begin{proof}
Let $h$ be a function on $\mathcal{S}_d$ defined by
\[ \label{eq: B. 1} \tag{B.1}
h(X, \tau) := \frac{1}{s_{d-1} \lp \mathcal{C}(\tau) \rp}.
\]
We claim that $h$ is bounded and continuous (recall that $h(X, \tau) = 0$ when $s_{d-1} \lp \mathcal{C}(\tau) \rp = \infty$ by convention). It is obvious that $h$ is bounded since $0 \leq h \leq 1$. 

To prove its continuity, let $(X, \tau) \in \mathcal{S}_d$ and $\e > 0$ be given. If $s_{d-1}\lp \mathcal{C}(\tau) \rp = \infty$, choose large enough $R > 0$ so that $\delta := (1 + R)^{-1} < \e$. Then, any $(X', \tau') \in \mathcal{S}_d$ with $d_{\mathcal{S}_d} \lp (X, \tau), (X', \tau') \rp < \delta$ satisfies $B_X(\tau; R) \cong B_{X'}(\tau'; R)$. Since $s_{d-1} \lp \mathcal{C}(\tau) \rp = \infty$, $s_{d-1} \lp \partial B_{X'}(\tau'; r) \rp \geq 1$ for all $0 \leq r \leq R$. Thus, 
\[ 
0 \leq h(X', \tau') = \frac{1}{s_{d-1} \lp \mathcal{C}(\tau') \rp} \leq \frac{1}{s_{d-1} \lp B_{X'}(\tau'; R) \rp } \leq \frac{1}{1 + R} < \e.
\]
This implies that $h$ is continuous at every $(X, \tau)$ with $s_{d-1} \lp \mathcal{C}(\tau) \rp = \infty$. Next, suppose that $s_{d-1} \lp \mathcal{C}(\tau) \rp < \infty$. This means that there exists some finite $R > 0$ such that $\mathcal{C}(\tau) = B_X(\tau; R)$. Choose $\delta > 0$ small enough that $\delta < \frac{1}{1 + R}$. Then, for any $(X', \tau') \in \mathcal{S}_d$ with $d_{\mathcal{S}_d} \lp (X, \tau), (X', \tau') \rp < \delta$, we have $\mathcal{C}(\tau) \cong \mathcal{C}(\tau')$. This implies that $h(X, \tau) = h(X', \tau')$. Therefore, $h$ is continuous on $(X, \tau) \in \mathcal{S}_d$.

By the definition of the local-weak convergence in probability, we obtain
\[ \label{eq: B.2} \tag{B.2}
\E \lb h(X_n, \sigma_n) \: | \: X_n \rb \overset{\Pr}{\to} \E \lb h (X, \tau) \rb.
\]
Note that 
\[ \label{eq: B.3} \tag{B.3}
C_n = \sum_{\sigma \in S_{d-1}(X_n)} \frac{1}{s_{d-1} \lp \mathcal{C}(\sigma) \rp}.
\]
Thus, the left-hand side of (\ref{eq: B.2}) is equal to
\[ \label{eq: B.4} \tag{B.4}
\frac{C_n}{s_{d-1} (X_n)}.
\]
This completes the proof.
\end{proof}

Next, we prove Proposition~\ref{prop: upper bound for the giant}. The following proof is an extension of that presented in \cite[Corollary 2.1]{Hofstad:2021}.

\begin{proof}[Proof of Proposition~\ref{prop: upper bound for the giant}]
For each $k \in \N$, let us define
\[ \label{eq: B.5} \tag{B.5}
Z_{\geq k} := \sum_{\sigma \in S_{d-1}(X_n)} \1\{ s_{d-1} \lp \mathcal{C}(\sigma) \rp \geq k \}.
\]
Note that $\{s_{d-1} \lp \mathcal{C}(\tau) \rp \geq k \} = \{s_{d-1} \lp B_{X}(\tau; k) \rp \geq k \}$. We obtain
\[ \label{eq: B.6} \tag{B.6}
\frac{Z_{\geq k}}{s_{d-1} (X_n)} = \E \lb \1\{s_{d-1} \lp B_{X_n}(\sigma_n; k) \rp \geq k \} \: | \: X_n \rb \overset{\Pr}{\to} \zeta_{\geq k},
\]
where 
\[ \label{eq: B.7} \tag{B.7}
\zeta_{\geq k} := \mu \lp s_{d-1} \lp \mathcal{C}(\tau) \rp \geq k \rp.
\]
On the other hand, for every $k \in \N$, $\{s_{d-1} \lp \mathcal{C}_{\max} \rp \geq k \} = \{ Z_{\geq k} \geq k \}$. Additionally, we have $s_{d-1} \lp \mathcal{C}_{\max} \rp \leq Z_{\geq k}$. 

Note that $\zeta_{\geq k} \to \zeta := \mu \lp s_{d-1} \lp \mathcal{C}(\tau) \rp = \infty \rp$. Take large enough $K$ so that $\zeta \geq \zeta_{\geq K} - \e/2$ for all $k > K$. Then, for all large enough $n$, we obtain
\[ \label{eq: B.8} \tag{B.8}
\begin{aligned}
\Pr \lp s_{d-1} \lp \mathcal{C}_{\max} \rp \geq s_{d-1}(X_n) (\zeta + \e) \rp & \leq \Pr \lp Z_{\geq k} \geq s_{d-1}(X_n) (\zeta + \e) \rp \\
& \leq \Pr \lp Z_{\geq k} \geq s_{d-1} (X_n) ( \zeta_{\geq k} + \e/2) \rp.
\end{aligned} 
\]
By (\ref{eq: B.6}), the last expression in (\ref{eq: B.8}) tends to $0$ as $n \to \infty$. This completes the proof.
\end{proof}

The following proof of Proposition~\ref{prop: side condition for the giant} can be viewed as an extension of that presented in \cite[Theorem 2.2]{Hofstad:2021}.

\begin{proof}[Proof of Proposition~\ref{prop: side condition for the giant}]
When $\zeta = 0$, Proposition~\ref{prop: upper bound for the giant} has already proven the proposed result, so it suffices to focus on the case where $\zeta > 0$. Let $(C_i)_{i \geq 1}$ be the sequence of the sizes (the number of $(d-1)$-simplices included) of the $d$-dimensional connected components in $X_n$ ordered in size, from the largest to the smallest (with ties broken arbitrarily). In particular, $C_1 := s_{d-1} \lp \mathcal{C}_{\max} \rp$.

Since $C_i \leq C_1$ for all $i \geq 1$, for every $k \geq 1$, we have
\[ 
\sum_{i \geq 1} C_i^2 1\{C_i \geq k \} \leq C_1 \sum_{i \geq 1} C_i \1\{ C_i \geq k\} ,
\]
which gives us
\[ \label{eq: B.9} \tag{B.9}
\frac{C_1}{s_{d-1}(X_n)} \geq \frac{\frac{1}{s_{d-1}(X_n)^2}\sum_{i \geq 1} C_i^2 \1\{C_i \geq k\}}{\frac{1}{s_{d-1}(X_n)} \sum_{i \geq 1} C_i \1\{C_i \geq k\} }
\]
Let us consider the denominator of (\ref{eq: B.9}) first. We have
\[ \label{eq: B.10} \tag{B.10}
\begin{aligned}
\frac{1}{s_{d-1}(X_n)} \sum_{i \geq 1} C_i \1\{ C_i \geq k\} & = \frac{1}{s_{d-1}(X_n)} \sum_{\sigma \in S_{d-1}(X_n)} \1 \{ s_{d-1} \lp \mathcal{C}(\sigma) \rp \geq k \} \\
& = \frac{1}{s_{d-1} (X_n)} Z_{\geq k} \\
& \overset{\Pr}{\to} \zeta_{\geq k}
\end{aligned}
\]
as $n \to \infty$ by the local-weak convergence in probability of $(X_n)_{n \geq 1}$. Since $\zeta_{\geq k} \to \zeta$ as $k \to \infty$, we can write
\[ \label{eq: B.11} \tag{B.11}
\frac{1}{s_{d-1}(X_n)} \sum_{i \geq 1} C_i \1\{ C_i \geq k\} = \zeta + o_{k, \Pr}(1).
\]
Let us define
\[ \label{eq: B.12} \tag{B.12}
\begin{aligned}
X_{n, k} & := \frac{1}{s_{d-1}(X_n)^2} \sum_{i \geq 1} C_i^2 \1\{ C_i \geq k \} - \lp \frac{1}{s_{d-1}(X_n)} \sum_{i \geq 1} C_i \1 \{ C_i \geq k \} \rp^2 \\
& = \frac{1}{s_{d-1}(X_n)^2} \sum_{i \geq 1} C_i^2 \1\{ C_i \geq k \} + \lp \frac{1}{s_{d-1}(X_n) Z_{\geq k} } \rp^2,
\end{aligned}
\]
so that we can express the numerator of the right-hand side of (\ref{eq: B.9}) as
\[ \label{eq: B.13} \tag{B.13}
\lp \frac{1}{s_{d-1}(X_n)} Z_{\geq k} \rp^2 + X_{n,k}.
\]
By (\ref{eq: B.11}), we can write (\ref{eq: B.13}) as
\[ \label{eq: B.14} \tag{B.14}
\zeta^2 + o_{k, \Pr}(1) + X_{n,k}.
\]
The term $X_{n,k}$ can be further expressed as follows:
\[ \label{eq: B.15} \tag{B.15}
\begin{aligned}
X_{n,k} & = \frac{1}{s_{d-1}(X_n)^2} \sum_{i, j \geq 1, i \neq j} C_i C_j \1\{ C_i \geq k, C_j \geq k\} \\
& = \frac{1}{s_{d-1} (X_n)^2} \left| \left\{ (\sigma, \sigma') \in S_{d-1}(X_n)^2 \: : \: s_{d-1} \lp \mathcal{C}(\sigma) \rp \geq k, s_{d-1} \lp \mathcal{C}(\sigma') \rp \geq k, \sigma \centernot\longleftrightarrow \sigma' \right\} \right|
\end{aligned}
\]
Note that
\[ \label{eq: B.16} \tag{B.16}
\begin{aligned}
& \E \lb X_{n,k} \rb \\
& = \E \lb \frac{1}{s_{d-1} (X_n)^2} \left| \left\{ (\sigma, \sigma') \in S_{d-1}(X_n)^2 \: : \: s_{d-1} \lp \mathcal{C}(\sigma) \rp, s_{d-1} \lp \mathcal{C}(\sigma') \rp \geq k, \sigma \centernot\longleftrightarrow \sigma' \right\} \right| \rb \\
& = \E \lb \E \lb \frac{1}{s_{d-1} (X_n)^2} \left| \left\{ (\sigma, \sigma') \in S_{d-1}(X_n)^2 \: : \: s_{d-1} \lp \mathcal{C}(\sigma) \rp, s_{d-1} \lp \mathcal{C}(\sigma') \rp \geq k, \sigma \centernot\longleftrightarrow \sigma' \right\} \right| \: \big| \: X_n \rb \rb \\
& = \E \lb \E \lb \1\{s_{d-1} \lp \mathcal{C}(\sigma_1) \rp, s_{d-1} \lp \mathcal{C}(\sigma_2) \rp \geq k, \sigma_1 \centernot\longleftrightarrow \sigma_2 \} \: \big| \: X_n \rb \rb,
\end{aligned}
\]
where $\sigma_1$ and $\sigma_2$ are $(d-1)$-simplices in $X_n$ chosen uniformly at random and independent of each other. The last expression of (\ref{eq: B.16}) is equal to
\[
\Pr \lp s_{d-1} \lp \mathcal{C}(\sigma_1) \rp s_{d-1} \lp \mathcal{C}(\sigma_2) \rp \geq k, \sigma_1 \centernot\longleftrightarrow \sigma_2 \rp.
\]
Thus, by the assumed condition (\ref{eq: 2.2.3}), we have
\[ \label{eq: B.17} \tag{B.17}
\lim_{k \to \infty} \limsup_{n \to \infty} \E \lb X_{n,k} \rb = 0.
\]
Summing up (\ref{eq: B.13}) and (\ref{eq: B.17}), we obtain
\[ \label{eq: B.18} \tag{B.18}
\frac{C_1}{s_{d-1}(X_n)} \geq \frac{\zeta^2 + o_{k, \Pr}(1)}{\zeta + o_{k, \Pr}(1)} = \zeta + o_{k, \Pr}(1).
\]
Since $k$ was chosen arbitrarily, we can conclude that
\[ \label{eq: B.19} \tag{B.19}
C_1 \geq s_{d-1}(X_n) \zeta (1 + o_{\Pr}(1)).
\]
Combining this with the result of Proposition~\ref{prop: upper bound for the giant}, we obtain the following concentration of $C_1$:
\[ \label{eq: B.20} \tag{B.20}
\frac{C_1}{s_{d-1}(X_n)} \overset{\Pr}{\to} \zeta.
\]

To prove the second assertion, observe that the following holds w.h.p. as $n \to \infty$:
\[ \label{eq: B.21} \tag{B.21}
\frac{1}{s_{d-1}(X_n)} C_2 \1\{ C_2 \geq k\} \leq \frac{1}{s_{d-1}(X_n)} \sum_{i \geq 2} C_i \1\{ C_i \geq k \} = \frac{1}{s_{d-1}(X_n) }\sum_{i \geq 1} C_i \1\{ C_i \geq k \} - \frac{C_1}{s_{d-1}(X_n)}.
\]
Combining \eqref{eq: B.11} and \eqref{eq: B.20}, we obtain that the rightmost expression of \eqref{eq: B.21} is of order $o_{k, \Pr}(1)$. Since $k$ is arbitrary, this concludes that
\[
\frac{C_2}{s_{d-1}(X_n)} \overset{\Pr}{\to} 0
\]
as $n \to \infty$.

\end{proof}

\end{document}